\newtheorem{theorem}{Theorem}[section]
\newtheorem{lemma}[theorem]{Lemma}
\newtheorem{corollary}[theorem]{Corollary}
\newtheorem{case}{Case}[theorem]
\newtheorem{subcase}{Sub-case}[case]
\numberwithin{subcase}{case}
\numberwithin{subsubcase}{subcase}
\numberwithin{claim}{theorem}
\newenvironment{definition}[1][Definition:]{\begin{trivlist}
\item[\hskip \labelsep {\bfseries #1}]}{\end{trivlist}}
\newenvironment{example}[1][Example:]{\begin{trivlist}
\item[\hskip \labelsep {\bfseries #1}]}{\end{trivlist}}
\newenvironment{note}[1][Note:]{\begin{trivlist}
\item[\hskip \labelsep {\bfseries #1}]}{\end{trivlist}}
\newenvironment{theorem-mannum}[2][Theorem:]{\begin{trivlist}
\item[\hskip \labelsep {\bfseries #1}\hskip \labelsep {\bfseries #2}]\em}{\end{trivlist}}
\newenvironment{corollary-mannum}[2][Corollary:]{\begin{trivlist}
\item[\hskip \labelsep {\bfseries #1}\hskip \labelsep {\bfseries #2}]\em}{\end{trivlist}}
\newcommand{\ds}{\displaystyle}
\title{\bf Level curve configurations and conformal equivalence of meromorphic functions.}
\author{Trevor Richards\thanks{Email: richardst@wlu.edu}\\\vspace{6pt} {\em Department of Mathematics, Washington and Lee University}\\ {\em Lexington, United States}}
\begin{document}

\maketitle
\begin{abstract}
Let $f=B_1/B_2$ be a ratio of finite Blaschke products having no critical points having no critical points on $\partial\mathbb{D}$.  Then $f$ has finitely many critical level curves (level curves containing critical points of $f$) in the disk, and the non-critical level curves of $f$ interpolate smoothly between the critical level curves.  Thus, to understand the geometry of all the level curves of $f$, one need only understand the configuration of the finitely many critical level curves of $f$.  In this paper we show that in fact such a function $f$ is determined not just geometrically but conformally by the configuration of its critical level curves.  That is, if $f_1$ and $f_2$ have the same configuration of critical level curves, then there is a conformal map $\phi$ such that $f_1\equiv f_2\circ\phi$.  We then use this to show that every configuration of critical level curves which could come from an analytic function is instantiated by a polynomial.  We also include a new proof of a theorem of B\^{o}cher (which is an extension of the Gauss--Lucas theorem to rational functions) using level curves.
\medskip

{\bf Keywords:} complex analysis; meromorphic functions; level curves; critical points; critical values
\end{abstract}
\section{HISTORY AND OVERVIEW}%

A great deal of work has been done on the geometry of level curves of an analytic (or meromorphic) function, especially concerning on the one hand issues such as convexity, star-shapeness, arc-length, and area (see for example~\cite{EHP,G,HW,P}), and on the other hand the relationship between functions which share a level curve (see for example~\cite{C,H,V}).  Inquiries of the latter sort culminated with the level curve structure theorem of Stephenson~\cite{St} in 1986, which implied many of the earlier results.  A nice summation of this may be found in~\cite{StSu} in which Stephenson and Sundberg also give a general result for inner functions sharing a level curve.

Here we highlight two other areas where analysis of level curves may be applied, and on which the present work bears.   We then give a brief overview of the results found in this paper.

\subsection{LEVEL CURVES AND THE FINGERPRINT OF A SHAPE}

The "fingerprint" $k$ which a smooth simple closed curve in $\mathbb{C}$ imposes on the unit circle $\mathbb{T}$ was introduced by A. A. Kirillov~\cite{K1, K2} in 1987, and is defined as follows.  Let $\Gamma$ be a smooth simple closed curve in $\mathbb{C}$, with bounded face $\Omega_-$ and unbounded face $\Omega_+$.  Let $\phi_-,\phi_+$ denote Riemann maps from $\mathbb{D},\mathbb{D}_+$ to $\Omega_-,\Omega_+$ respectively (here $\mathbb{D}_+$ is defined as $\mathbb{C}\setminus\text{cl}(\mathbb{D})$).  With certain normalizations on the Reimann maps, we define the fingerprint $k$ of $\Gamma$ by $k\colonequals{\phi_+}^{-1}\circ\phi_-$.  Since $\Gamma$ is smooth it is easy to show that $k$ is a diffeomorphism from $\mathbb{T}$ to $\mathbb{T}$.  Moreover, if $\hat{\Gamma}$ is the image of $\Gamma$ under an affine transformation $f(z)=az+b$, with corresponding fingerprint $\hat{k}$, then $k=\hat{k}\circ\psi$ for some automorphism $\psi:\mathbb{D}\to\mathbb{D}$.  Therefore we may define a function $\mathcal{F}$ which maps smooth simple closed curves (modulo composition with affine transformation) to the corresponding diffeomorphism of $\mathbb{T}$ which is its fingerprint (modulo pre-composition with an automorphism of $\mathbb{D}$).  (Note: this and more background may be found in~\cite{EKS}.)  Kirillov proved the following theorem~\cite{K1, K2}.

\begin{theorem}\label{theorem:Kirillov theorem.}
$\mathcal{F}$ is a bijection.
\end{theorem}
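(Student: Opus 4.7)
My plan has two parts corresponding to the two directions of the bijection, with quite different techniques: injectivity will follow from a reflection/removability argument on paired Riemann maps, while surjectivity will rely on the nontrivial construction known as conformal welding.

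For injectivity, suppose two smooth Jordan curves $\Gamma_1,\Gamma_2$ produce fingerprints $k_1,k_2$ with $k_1=k_2\circ\psi$ for some $\psi\in\mathrm{Aut}(\mathbb{D})$. Replacing $\phi_{1,-}$ by $\phi_{1,-}\circ\psi^{-1}$ (still a Riemann map onto $\Omega_{1,-}$), I may assume $k_1=k_2$. Then I define $F\colon\hat{\mathbb{C}}\to\hat{\mathbb{C}}$ piecewise by $F=\phi_{2,-}\circ\phi_{1,-}^{-1}$ on $\Omega_{1,-}$ and $F=\phi_{2,+}\circ\phi_{1,+}^{-1}$ on $\Omega_{1,+}$. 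The Riemann maps extend smoothly to the boundaries (Kellogg--Warschawski, since the curves are smooth), and the identity $k_1=k_2$ is exactly the statement that the two pieces agree on $\Gamma_1$, so $F$ is a homeomorphism of $\hat{\mathbb{C}}$ that is conformal off the smooth arc $\Gamma_1$. Because a smooth Jordan curve is removable for bounded holomorphic functions (one verifies Morera on small disks crossing $\Gamma_1$ via smooth local charts), $F$ must be a global M\"obius transformation; the Riemann-map normalizations then reduce it to an affine transformation, making $\Gamma_2=F(\Gamma_1)$ affinely equivalent to $\Gamma_1$.

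For surjectivity, given a diffeomorphism $k\colon\mathbb{T}\to\mathbb{T}$, I would form a topological sphere $X$ by gluing $\overline{\mathbb{D}}$ and $\hat{\mathbb{C}}\setminus\mathbb{D}$ along $\mathbb{T}$ via the identification $\zeta\sim k(\zeta)$. Away from the weld $\gamma$ (the image of $\mathbb{T}$), the standard coordinate charts on the two hemispheres equip $X$ with a complex structure. To extend that structure across $\gamma$, I would extend $k$ to a smooth diffeomorphism of an annular neighborhood of $\mathbb{T}$, pull back the standard conformal structure from one side to obtain a bounded Beltrami coefficient $\mu$ in a neighborhood of $\gamma$ in $X$, and invoke the measurable Riemann mapping theorem to straighten $\mu$ and produce a holomorphic chart across $\gamma$. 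By the uniformization theorem, $X$ is biholomorphic to $\hat{\mathbb{C}}$, and the image of $\gamma$ under this biholomorphism is a smooth Jordan curve $\Gamma$ whose fingerprint is $k$ by construction.

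The chief obstacle is this welding step: ensuring that the complex structure genuinely extends across $\gamma$, and that the resulting weld curve is smooth rather than merely continuous. This is where the diffeomorphism hypothesis on $k$ is essential and must be used substantively. For merely quasi-symmetric $k$ one still obtains a welding but only a quasi-circle emerges; for general continuous $k$ the welding may fail to exist or to be unique, so the smoothness is what makes both the construction and the regularity of $\Gamma$ go through.
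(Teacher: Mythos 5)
The paper itself offers no proof of this statement: Theorem~\ref{theorem:Kirillov theorem.} is quoted as Kirillov's result and attributed to \cite{K1, K2}, so there is no in-paper argument to compare against, and what you have written is the standard conformal-welding proof. Your injectivity half is essentially complete and correct: after absorbing $\psi$ into $\phi_{1,-}$, the glued map $F$ is a homeomorphism of $\hat{\mathbb{C}}$ conformal off $\Gamma_1$, a smooth Jordan curve is removable for continuous functions holomorphic off it, so $F$ is M\"obius, and the normalization of $\phi_{i,+}$ at $\infty$ forces $F$ to be affine. One small repair: justify the removability by the Morera argument applied directly to contours crossing the (rectifiable, measure-zero) curve, using continuity of $F$ up to $\Gamma_1$; ``flattening by smooth local charts'' is not legitimate as stated, since a merely smooth change of variable does not preserve holomorphy.

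The genuine gap is in surjectivity, exactly at the point you label the chief obstacle and then leave unresolved: you must prove that the weld curve is smooth, and this does not follow formally from the construction you describe. Concretely, after extending $k$ to a smooth diffeomorphism $K$ of a collar of $\mathbb{T}$ and pulling back the standard structure, your chart across the weld is a quasiconformal solution $w^{\mu}$ of a Beltrami equation whose coefficient $\mu$ is smooth (and compactly bounded away from modulus $1$). The missing ingredient is the regularity theorem that quasiconformal solutions of the Beltrami equation with $C^{\infty}$ dilatation are $C^{\infty}$ diffeomorphisms (elliptic regularity/Schauder theory for the Beltrami operator). Granting that, near the weld $\Gamma$ is the image of the smooth curve $\mathbb{T}$ under $w^{\mu}$ (respectively $w^{\mu}\circ K$ from the inner side) followed by a holomorphic transition map, hence smooth; without it you only obtain a quasicircle, which does not lie in the domain of $\mathcal{F}$, so surjectivity is not established. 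A second, smaller omission is the claim that the fingerprint of $\Gamma$ ``is $k$ by construction'': you need a line of normalization bookkeeping, namely post-compose the uniformizing map with a M\"obius map sending the image of the outer hemisphere's point $\infty$ to $\infty$ (and an affine rotation to make the derivative at $\infty$ positive); since this is a post-composition applied to both hemispherical parametrizations it leaves $\phi_{+}^{-1}\circ\phi_{-}=k$ unchanged, which matters because post-composition of $k$ is not among the allowed equivalences, only pre-composition with automorphisms of $\mathbb{D}$ is.
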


If we restrict our attention to smooth curves which arise as level curves of polynomials, a similar result may be obtained.  One first shows that if $\Gamma$ is a proper polynomial lemniscate (ie $\Gamma=\{z:|p(z)|=1\}$ for some $n$-degree polynomial $p$ such that $\{z:|p(z)|=1\}$ is smooth and connected) then the corresponding fingerprint is of  the form $k=B^{\frac{1}{n}}$ for some $n$-degree Blaschke product $B$.  If we let $\mathcal{F}_p$ denote the function $\mathcal{F}$ viewed as having as its domain the smooth simple closed curves which arise as proper polynomial lemniscates (modulo composition with an affine transformation), and having as its codomain the diffeomorphisms of $\mathbb{T}$ consisting of $n^{\text{th}}$ roots of $n$-degree Blaschke products (modulo pre-composition with an automorphism of $\mathbb{D}$), then one may prove the following theorem.

\begin{theorem}\label{theorem:Kirillov polynomial theorem.}
$\mathcal{F}_p$ is a bijection.
\end{theorem}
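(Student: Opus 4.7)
The plan is to break the proof into a preliminary structural claim (fingerprints of proper polynomial lemniscates automatically have the form $B^{1/n}$) and then to derive the two directions of the bijection from this claim together with Theorem~\ref{theorem:Kirillov theorem.}.

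For the structural claim, I fix a degree-$n$ polynomial $p$ with $\Gamma=\{|p|=1\}$ smooth and connected, and let $\phi_-\colon\mathbb{D}\to\Omega_-$ and $\phi_+\colon\mathbb{D}_+\to\Omega_+$ be the Riemann maps (with $\phi_+(\infty)=\infty$). Since $p^{-1}(\mathbb{D})=\Omega_-$, the composition $p\circ\phi_-$ is a proper holomorphic self-map of $\mathbb{D}$ of degree $n$, hence a degree-$n$ Blaschke product, which I call $B$. Similarly $p\circ\phi_+$ is a proper degree-$n$ self-map of $\mathbb{D}_+$ (as a disc on the Riemann sphere) fixing $\infty$; conjugating by $z\mapsto 1/z$ turns it into a proper holomorphic self-map of $\mathbb{D}$ with a single zero of order $n$ at the origin and no other zeros, which must therefore equal $e^{i\alpha}z^n$. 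After absorbing the unimodular constant into the choice of $\phi_+$, we have $p\circ\phi_+(z)=z^n$. Using $\phi_-(e^{i\theta})=\phi_+(k(e^{i\theta}))$ on $\mathbb{T}$ and applying $p$ then gives $B(e^{i\theta})=k(e^{i\theta})^n$, so $k=B^{1/n}$.

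Injectivity of $\mathcal{F}_p$ is inherited directly from Theorem~\ref{theorem:Kirillov theorem.}, since $\mathcal{F}_p$ is the restriction of $\mathcal{F}$ to the class of polynomial lemniscates. For surjectivity, I start with an arbitrary degree-$n$ Blaschke product $B$ and set $k=B^{1/n}$ (any branch). Theorem~\ref{theorem:Kirillov theorem.} produces a smooth Jordan curve $\Gamma$ whose fingerprint equals $k$ modulo $\operatorname{Aut}(\mathbb{D})$; pre-composing $\phi_-$ with a suitable automorphism puts us in the case $\phi_+^{-1}\circ\phi_-=B^{1/n}$ on $\mathbb{T}$ exactly. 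I then define $p$ piecewise by $p:=B\circ\phi_-^{-1}$ on $\Omega_-$ and $p:=(\phi_+^{-1})^n$ on $\Omega_+$; the two pieces agree on $\Gamma$ precisely because $k^n=B$. Hence $p$ is continuous on $\mathbb{C}$ and holomorphic off the smooth curve $\Gamma$, so Morera supplies holomorphicity across $\Gamma$. Since $\phi_+^{-1}(w)=cw+O(1)$ at $\infty$, one has $p(w)\sim c^n w^n$, whence $p$ is a polynomial of degree $n$ by the usual growth argument, and $\Gamma=\{|p|=1\}$ by construction.

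The main obstacle is the structural claim, and within it the identification $p\circ\phi_+(z)=z^n$; once that is in hand, the rest is essentially a gluing/normalization argument. The remaining technicalities are the Morera-type gluing across $\Gamma$ and the asymptotic normalization at $\infty$ which pins down both the degree and the leading coefficient of $p$.
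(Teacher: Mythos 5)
Your argument is correct, but it is not the route this paper takes. The paper does not reprove Theorem~\ref{theorem:Kirillov polynomial theorem.} by welding: it gets injectivity of $\mathcal{F}_p$ from Theorem~\ref{theorem:Kirillov theorem.} (as you do), but it obtains the surjectivity statement as a reformulation of Corollary~\ref{cor: Polynomial equivalence.}, which is in turn deduced from the level-curve machinery — Theorem~\ref{thm:Conformal equivalence iff Pi equivalence.} together with the surjectivity of $\Pi:H_a\to PC_a$ (Theorem~\ref{thm: Pi:H_a to PC_a is bijective.}, proved via the Beardon--Carne--Ng count in Lemma~\ref{lemma: Partial surjectivity.} and the induction on atypicality degree). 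In particular the paper uses only the \emph{injectivity} half of Kirillov's theorem, whereas you invoke its full strength (existence of a smooth welding curve for the diffeomorphism $k=B^{1/n}$) and then glue $B\circ\phi_-^{-1}$ on $\Omega_-$ with $(\phi_+^{-1})^n$ on $\Omega_+$, using boundary regularity of the Riemann maps and a Morera/removability argument across $\Gamma$, plus the growth estimate $\phi_+^{-1}(w)\sim cw$ to identify a degree-$n$ polynomial. What your approach buys is brevity and a direct construction; note that it actually yields Corollary~\ref{cor: Polynomial equivalence.} itself (since $\{|p|<1\}=\Omega_-$ is connected and $B\equiv p\circ\phi_-$), so it is closer in spirit to the Ebenfelt--Khavinson--Shapiro proof the paper cites than to the paper's own argument. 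What the paper's longer route buys is independence from the surjectivity of conformal welding and substantially more information: the classification of generalized finite Blaschke products up to conformal equivalence by their critical level curve configurations, of which the polynomial realization is just one consequence. Two small points you should make explicit: that a continuous branch of $B^{1/n}$ on $\mathbb{T}$ exists and is an orientation-preserving smooth diffeomorphism (because $\arg B$ increases by $2\pi n$ and $B'\neq 0$ on $\mathbb{T}$), and that the unimodular constant arising in $p\circ\phi_+(z)=e^{i\alpha}z^n$ can only be absorbed up to replacing $B$ by a unimodular multiple, which is harmless since that is again a Blaschke product; neither affects correctness.
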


This result was stated and proved by Ebenfelt, Khavinson, and Shapiro in~\cite{EKS}.  The injectivity of $\mathcal{F}_p$ is an immediate consequence of Theorem~\ref{theorem:Kirillov theorem.}.  The surjectivity claim in Theorem~\ref{theorem:Kirillov polynomial theorem.} is equivalent to the following Corollary~\ref{cor: Polynomial equivalence.} in the present paper, which follows from our work in Section~\ref{sect: Pi is surjective: The generic case.}.

\begin{corollary-mannum}{\ref{cor: Polynomial equivalence.}}
For every finite Blaschke product $B$ of degree $n$, there is some $n$ degree polynomial $p$ such that the set $G\colonequals\{z:|p(z)|<1\}$ is connected, and some conformal map $\phi:\mathbb{D}\to{G}$ such that $B\equiv p\circ\phi$ on $\mathbb{D}$.
\end{corollary-mannum}

\subsection{LEVEL CURVES AND GREEN'S FUNCTIONS}%

Let $f$ be a meromorphic function with simply connected domain $D$.  Let $\Lambda$ be a bounded component of $\{z\in D:|f(z)|=1\}$ which does not intersect $\partial D$ and does not contain a zero of $f'$.  Then $\Lambda$ is an analytic simple closed curve in $D$.  Let $G$ denote the bounded face of $\Lambda$.  Since $\partial G$ is analytic, we may find a conformal map $\phi:\mathbb{D}\to G$ which extends analytically to $\partial\mathbb{D}$.  If we now pull $f$ back to $\mathbb{D}$ by $\widetilde{f}=f\circ\phi$, we obtain a non-constant function $\widetilde{f}$ which is meromorphic on the closure of $\mathbb{D}$ and has modulus $1$ on $\partial\mathbb{D}$, that is, a ratio of finite Blaschke products $\widetilde{f}=\widetilde{f_1}/\widetilde{f_2}$.  Decompose $\widetilde{f_1}$ and $\widetilde{f_2}$ into their component degree $1$ Blaschke products to obtain

\[
\widetilde{f}(z)=\ds\dfrac{\prod_{i=1}^M\widetilde{A_i}(z)}{\prod_{i=1}^N\widetilde{B_i}(z)}.
\]

We will now push each of these component degree~$1$ Blaschke products forward to $G$, and write $A_i=\widetilde{A_i}\circ\phi^{-1}$ and $B_i=\widetilde{B_i}\circ\phi^{-1}$.  Each $A_i$ and $B_i$ has a single zero in $G$ and has modulus $1$ on $\partial G$, and it is not hard to see that each $\ln|A_i|$ and $\ln|B_i|$ is a Green's function for $G$, with poles at the zero of $A_i$ and $B_i$ respectively.  Therefore $\ln|f|$ is an integer linear combination of Green's functions of $G$

\[
\ln|f|=\ds\sum_{i=1}^M\ln|A_i|-\sum_{i=1}^N\ln|B_i|.
\]

Conversely, if $\ds\sum_{i=1}^Pc_ig_i(z)$ is any integer linear compination of Green's functions of $G$, via the inverse process of that just described we obtain that $\ds\sum_{i=1}^Pc_ig_i(z)=\ln|h|$ for some function $h$ which is meromorphic on the closure of $G$, and which has $\partial G$ as a level curve.  Moreover, for any meromorphic function $h$, the critical points and level curves of $h$ are identical to the critical points and level curves of the harmonic function (with isolated singularities) $\ln|h|$.  Therefore the study in this paper of function elements $(f,G)$ (that is, the function $f$ with domain $G$) applies also to integer linear combinations of Green's functions of the region $G$.  (One may see this correspondence at work in~\cite{W1}, in which Walsh translates many results involving critical points of polynomials into results for Green's functions.)

\subsection{OVERVIEW}%

Our main goal in this paper is to explore the ways in which the configuration of level curves of a meromorphic function characterizes that function modulo conformal equivalence.

We begin in Section~\ref{sect: Preliminary results.} with several preliminary results on the bounded level curves of a meromorphic function.  In particular, we consider the level curves of a meromorphic function $f$ with domain $G$ such that

\begin{itemize}
\item
$G$ is open, bounded, and simply connected.
\item
$f$ may be extended to a meromorphic function on $cl(G)$.
\item
$|f|=1$ on $\partial G$.
\item
$f'\neq0$ on $\partial G$.
\end{itemize}

(Note that $|f|=1$ and $f'\neq0$ on $\partial G$ together imply that $\partial G$ is smooth.  Also, if $G$ is finitely connected then all the results of Section~\ref{sect: Preliminary results.} still hold, however, for simplicity's sake, we assume that $G$ is simply connected.)

If one pulls such a function back to the disk via a conformal map, one obtains a ratio of finite Blaschke products of different degrees.  Therefore we call the pair $(f,G)$ a generalized finite Blaschke ratio.

In Section~\ref{sect:Possible level curve configs.} we build up the notion of a level curve configuration for a generalized finite Blaschke ratio, and construct a set, which we will call $PC$, whose members will represent the possible configurations of the critical level curves of $(f,G)$ (that is, the level curves of $f$ in $G$ which contain critical points of $f$).  We then define a function $\Pi$ which maps $(f,G)$ to the corresponding configuration in $PC$.

Section~\ref{sect:Pi respects sim.} contains the following result, which shows that the data preserved in the critical level curve configuration of a generalized finite Blaschke ratio determines the function up to conformal equivalence.

\begin{theorem-mannum}{\ref{thm:Conformal equivalence iff Pi equivalence.}}
For two generalized finite Blaschke ratios $(f_1,G_1)$ and $(f_2,G_2)$, $(f_1,G_1)\sim(f_2,G_2)$ if and only if $\Pi(f_1,G_1)=\Pi(f_2,G_2)$.
\end{theorem-mannum}

Here $(f_1,G_1)\sim(f_2,G_2)$ means that there is some conformal map $\phi:G_1\to G_2$ such that $f_1\equiv f_2\circ\phi$ on $G_1$ (clearly this $\sim$, which we call "conformal equivalence" is an equivalence relation on the set of generalized finite Blaschke ratios).  Theorem~\ref{thm:Conformal equivalence iff Pi equivalence.} implies that if we view $\Pi$ as having for its domain the set of equivalence classes of generalized finite Blaschke ratios modulo conformal equivalence then, first, $\Pi$ is well defined, and second, $\Pi$ is injective.  This result is similar in some respects to the way in which the dynamical properties of a postcritically finite polynomial are preserved in the corresponding Hubbard tree \cite{Poi}.

In Sections~\ref{sect: Pi is surjective: The generic case.}~and~\ref{sect: Pi is surjective: The general case.} we show that, in a limited sense, $\Pi$ is surjective.  That is, we define a subset $PC_a\subset{PC}$ of configurations which naturally correspond to the level curve configurations of analytic functions.  If we view $\Pi$ as having for its domain the equivalence classes of generalized finite Blaschke ratios $(f,G)$ with analytic $f$, and having codomain $PC_a$, then $\Pi$ is surjective.   From this we will deduce Corollary~\ref{cor: Polynomial equivalence.} mentioned above.

\section{PRELIMINARY RESULTS}\label{sect: Preliminary results.}

We begin with a discussion of the bounded level curves of a meromorphic function.  We give brief justifications for these results, as they follow from elementary properties of meromorphic functions.  Throughout this section, we let $(f,G)$ denote some fixed generalized finite Blaschke ratio.

Let $\Lambda$ denote a level curve of $f$ in $G$ (that is, a component of the set $\{z\in G:|f(z)|=\epsilon\}$ for some constant $\epsilon>0$).  $\Lambda$ is a finite connected graph, whose vertices are points of non-injectivity of $f$, namely zeros of $f'$.  Several things may be said about which graphs may appear as critical level curves $\Lambda$.  If $z$ is a critical point of multipicity $n$ of $f$, the ramification of $f$ at $z$ is of degree $n+1$, and thus the level curve of $f$ containing $z$ has $2(n+1)$ edges meeting at $z$ (throughout the paper we will count an edge twice if both its ends are at $z$).  Thus there are evenly many edges incident to each vertex of $\Lambda$.  It can easily be shown that this fact implies that each edge of $\Lambda$ is incident to two distinct faces of $\Lambda$.  In Figure~\ref{fig:AdmissibleGraphs} we have several graphs which might appear as critical level curves of $f$ in $G$, and Figure~\ref{fig:NonAdmissibleGraphs} shows several graphs which may not appear as critical level curves of $f$ in $G$ (all modulo homeomorphism).

\begin{figure}[H]
\begin{minipage}[b]{0.45\linewidth}
\centering
	\includegraphics[width=\textwidth]{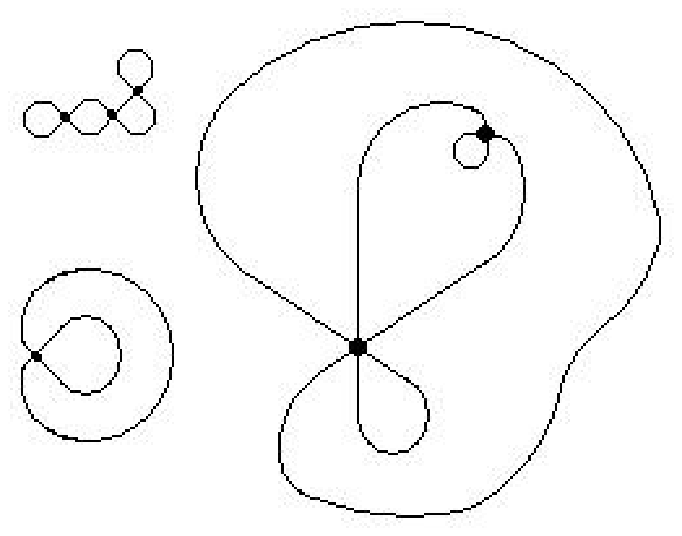}
	\caption{Admissible Graphs}
	\label{fig:AdmissibleGraphs}
\end{minipage}
\hspace{0.5cm}
\begin{minipage}[b]{0.45\linewidth}
\centering
	\includegraphics[width=\textwidth]{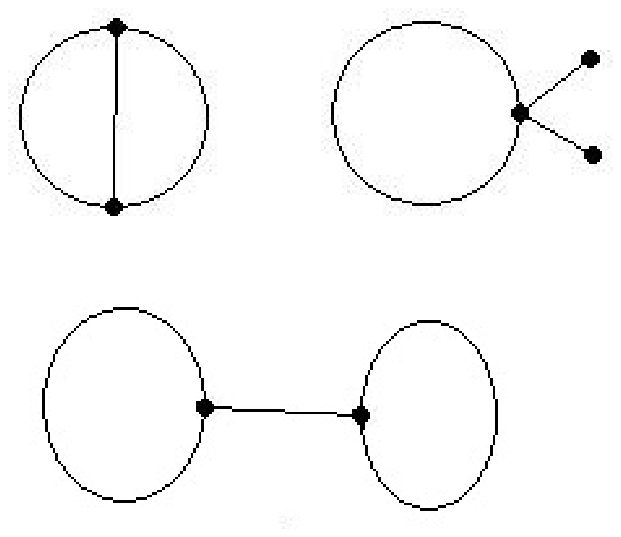}
	\caption{Non Admissible Graphs}
	\label{fig:NonAdmissibleGraphs}
\end{minipage}
\end{figure}

Here are several concrete examples as well.

\begin{example}
Let $f(z)=z^n$ for some $n\in\mathbb{Z}\setminus\{0\}$, and let $\epsilon>0$ be given.  Then the set $\{z\in\mathbb{C}:|f(z)|=\epsilon\}$ is the circle centered at $0$ with radius $\epsilon^{\frac{1}{n}}$.
\end{example}

\begin{example}
Let $f(z)=z^5-1$.  In Figure~\ref{fig:p(z)=z^5-1levelcurves} below we see the level sets $\{z\in\mathbb{C}:|f(z)|=\epsilon\}$, for $\epsilon=0.5,1,\text{ and }1.5$.
\end{example}

\begin{figure}[H]
	\centering
		\includegraphics[width=.5\textwidth]{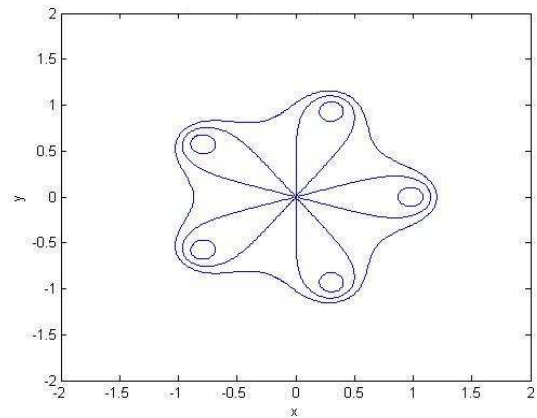}
	\caption{$f(z)=z^5-1$}
	\label{fig:p(z)=z^5-1levelcurves}
\end{figure}

We may use the facts about level curves mentioned above to give a new proof of a theorem of B\^{o}cher~\cite[pg. 97]{W2}.  The part which we prove here is an extension of the Gauss--Lucas theorem to certain rational functions.  B\^{o}cher's proof (and the normal proof of the Gauss--Lucas theorem) is analytic, making use of logarithmic differentiation.  What follows appears to be the first geometric proof of these results.  First a definition.

\begin{definition}
For $w\in{G}$, define $\Lambda_w$ to be the component of $\{z\in G:|f(z)|=|f(w)|\}$ which contains $w$.
\end{definition}

\begin{theorem}[B\^{o}cher's Theorem]\label{thm: Bocher's theorem.}
Let $T_1,T_2$ be two circles in the Riemann sphere $\bar{\mathbb{C}}$.  For $i,j\in\{1,2\}$ with $i\neq j$, let $D_i$ denote the face of $T_i$ which does not contain $T_j$.  If $R$ is a degree $n\geq1$ rational function with all of its zeros in $D_1$ and all of its poles in $D_2$, then all of the critical points of $R$ are contained in $D_1\cup D_2$.
\end{theorem}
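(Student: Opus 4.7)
The plan is to argue by contradiction, using the face/level-curve machinery of Section~\ref{sect: Preliminary results.}. Since Möbius transformations preserve circles, zeros, poles, and critical points, I would first normalize so that $T_1 = \{|z|=r_1\}$ and $T_2 = \{|z|=r_2\}$ with $r_1 < r_2$; then $D_1$ is an inner disk, $D_2$ an outer spherical disk, and $A = \{r_1 < |z| < r_2\}$ the annulus between. Suppose for contradiction $R$ has a critical point $w \in A$. Since $R$ has no zeros or poles on $A$, $c \colonequals |R(w)| \in (0,\infty)$. Let $\Lambda_w$ be the connected component of the level set $\{z \in \bar{\mathbb{C}} : |R(z)|=c\}$ containing $w$.

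From the preliminary observations, $\Lambda_w$ is a finite connected graph embedded in $\bar{\mathbb{C}}$ whose vertices are critical points of $R$, each of even degree $\geq 4$, and whose edges each border two distinct faces. Applying Euler's formula on the sphere with $v \geq 1$ and $2e = \sum \deg(v_i) \geq 4v$ yields $f = 2 + e - v \geq 2 + v \geq 3$ faces of $\bar{\mathbb{C}} \setminus \Lambda_w$. A maximum-principle argument then shows each face contains a zero or pole of $R$: on a face $F$, $|R|$ is uniformly $<c$ or $>c$ (by connectedness) with $|R|=c$ on $\partial F \subseteq \Lambda_w$, so if $F$ contained no zero or pole then $\log|R|$ would be harmonic on $F$ with constant boundary value $\log c$, forcing $|R|\equiv c$ on $F$ and contradicting $|R| \neq c$ there.

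To conclude I would argue that all zeros of $R$ lie in a single face $F_1$ and all poles in a single other face $F_2$, so that $f \leq 2$, contradicting $f \geq 3$. Concretely, pick a connected open $U_1 \subset D_1$ containing all the zeros with $\overline{U_1} \cap \Lambda_w = \emptyset$---for example a thin tubular neighborhood of a polygonal arc in $D_1$ joining the zeros, shrunk enough to avoid the closed set $\Lambda_w$---and similarly $U_2 \subset D_2$. Then each $U_i$ lies in one face $F_i$ with $F_1 \neq F_2$ (since $|R|<c$ on $F_1$ and $|R|>c$ on $F_2$), and every zero and pole of $R$ sits inside $F_1 \cup F_2$. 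The main obstacle is arranging the connected neighborhoods $U_1, U_2$ to simultaneously avoid $\Lambda_w$ when the level curve happens to cross $T_1$ or $T_2$; this requires analyzing the behavior of $\Lambda_w$ inside $D_1$ (resp.~$D_2$)---showing that any closed loop of $\Lambda_w$ in $D_i$ must bound a region on which $R$ restricts as a biholomorphism onto $\{|w|<c\}$ (resp.~$\{|w|>c\}$)---and using this to route the polygonal paths appropriately.
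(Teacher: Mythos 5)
Your overall strategy (normalize to concentric circles, view $\Lambda_w$ as a connected finite graph on the sphere, use $2e\geq 4v$ and Euler's formula to get at least three faces, and use the maximum principle to show every face of $\bar{\mathbb{C}}\setminus\Lambda_w$ contains a zero or pole) is sound up to that point, and it is genuinely different from the paper's argument. One parenthetical claim is false as stated: a face $F$ of $\bar{\mathbb{C}}\setminus\Lambda_w$ need not have $|R|$ uniformly $<c$ or $>c$, since $F$ may contain \emph{other} components of the level set $\{|R|=c\}$; but this is harmless for the step where you use it, because $\log|R|$ harmonic on a zero/pole-free face with boundary values $\log c$ already forces $R$ constant, and the later need for $F_1\neq F_2$ can be dropped entirely (if all zeros and poles sat in one face, the other $\geq 2$ faces would violate your own step).

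The genuine gap is the final step: you never prove that all zeros lie in a single face and all poles in a single face of $\bar{\mathbb{C}}\setminus\Lambda_w$, and this is where all the work lies. Since $\Lambda_w$ may cross $T_1$ and $T_2$, its intersection with $D_1$ need not consist of closed loops; it can contain arcs entering and exiting through $T_1$, and such arcs, completed by portions of $\Lambda_w$ outside $D_1$, can a priori form closed curves on the sphere separating one group of zeros from another. Your proposed fix analyzes only closed loops of $\Lambda_w$ contained in $D_i$, so it does not address exactly the configurations that threaten the claim; moreover the asserted sub-lemma is itself false in general: a loop of the level set bounds a region on which $R$ is a \emph{proper} map onto $\{|\zeta|<c\}$ of degree equal to the number of enclosed zeros (counted with multiplicity), and it is a biholomorphism only when that region contains a single simple zero and no critical points, which the hypotheses do not guarantee (critical points are allowed inside $D_1$). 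As it stands the separation claim is no easier than the theorem itself, so the proof is incomplete. For contrast, the paper sidesteps all of this with a two-point comparison: after a M\"obius normalization putting the critical point at $0$ and the circles on the negative and positive real axes at distance greater than $1$ from $0$, the four (or more) local branches of $\Lambda_0$ give two points $u_1=x_1+ic$, $u_2=x_2+ic$ of $\Lambda_0$ with $-1\leq x_1<x_2\leq 1$, and the factorization of $R$ gives $|R(u_1)|<|R(u_2)|$ term by term, contradicting that $u_1,u_2$ lie on the same level curve.
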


Note that in the case where $R$ is a polynomial this becomes the content of the Gauss--Lucas theorem.

\begin{proof}
Let us suppose by way of contradiction that there is some critical point not in either $D_1$ or $D_2$.  By pre-composing with an appropriate M\"obius transformation, we may assume that this critical point is at the origin and that $T_1$ and $T_2$ are bounded with their centers on the negative real axis and positive real axis respectively, and that neither $T_1$ nor $T_2$ are within $1$ of the origin.  Since $R$ is at least $2$-to-$1$ in a neighborhood of $0$, there are at least $4$ edges of $\Lambda_0$ (the level curve of $R$ containing $0$) intersecting at $0$.  Therefore there is some horizontal line segment $L_c=\{x+ic:x\in[-1,1]\}$ which intersects $\Lambda_0$ in at least two distinct points.  Let $u_1=x_1+ic$ and $u_2=x_2+ic$ denote these two points (labelled so that $x_1<x_2$).

\begin{figure}[H]
	\centering
		\includegraphics[width=.8\textwidth]{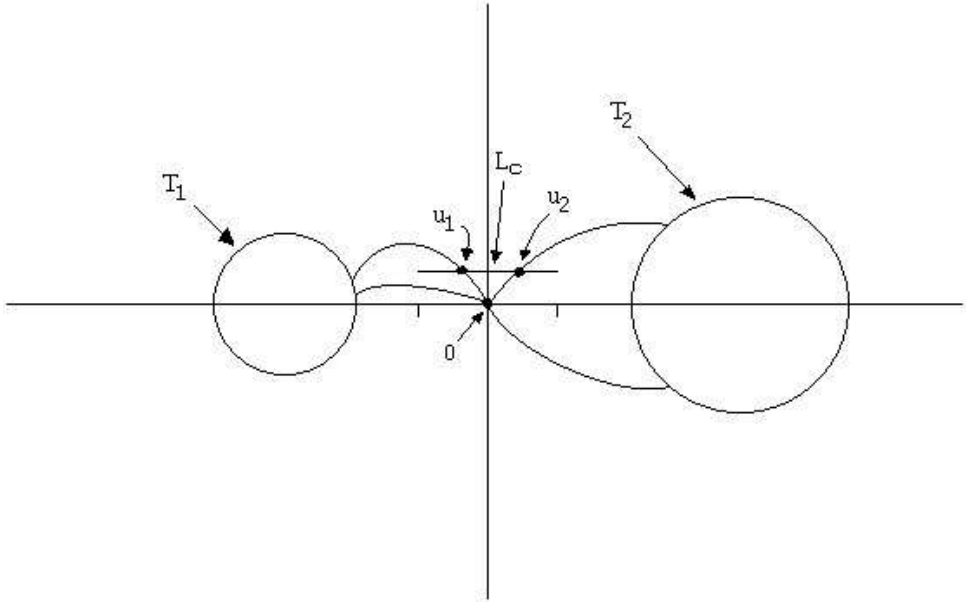}
	\caption{B\^{o}cher's Theorem}
	\label{fig:FigureForGaussTheorem}
\end{figure}

Let $z_1,\ldots,z_n\in D_1$ denote the zeros of $R$ and let $w_1,\ldots,w_n\in D_2$ denote the poles of $R$.  Then for each zero $z_i$ and pole $w_i$ of $R$, $|z_i-u_1|<|z_i-u_2|$, and $|w_i-u_1|>|w_i-u_2|$.  Therefore

\[
|R(u_1)|=\ds\dfrac{\prod_{i=1}^n|z_i-u_1|}{\prod_{i=1}^n|w_i-u_1|}<\dfrac{\prod_{i=1}^n|z_i-u_2|}{\prod_{i=1}^n|w_i-u_2|}=|R(u_2)|.
\]

This a contradiction because $u_1$ and $u_2$ are in the same level curve of $R$.  Thus we conclude that all critical points of $R$ are contained in $D_1\cup D_2$.
\end{proof}

Theorem~\ref{thm: Two level curves imply crit. level curve.} states that if any two level curves $\Lambda_1$ and $\Lambda_2$ of $f$ in $G$ are exterior to each other, then there is a critical level curve of $f$ in $G$ which "separates" the two.  That is, there is a critical level curve $\Lambda$ of $f$ in $G$ such that $\Lambda_1$ and $\Lambda_2$ are contained in different bounded faces of $\Lambda$.

\begin{theorem}\label{thm: Two level curves imply crit. level curve.}
Let each of $\Lambda_1$ and $\Lambda_2$ be level curves of $f$ contained in $G$.  Let $F_1$ denote the unbounded face of $\Lambda_1$ and $F_2$ the unbounded face of $\Lambda_2$.  If $\Lambda_1\subset{F_2}$, and $\Lambda_2\subset{F_1}$, then there is some $w\in{G}$ which lies in $F_1\cap{F_2}$, such that $f'(w)=0$ and $\Lambda_1$ and $\Lambda_2$ are contained in different bounded faces of $\Lambda_w$.
\end{theorem}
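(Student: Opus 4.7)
My plan is a mountain-pass / Morse-theoretic argument on $|f|$. Without loss of generality let $\epsilon_i$ denote the constant value of $|f|$ on $\Lambda_i$, and assume $\epsilon_1 \le \epsilon_2$. For each $c > 0$ set $U(c) = \{z \in G : |f(z)| < c\}$, monotone nondecreasing in $c$. For $c > \epsilon_2$ let $A_i(c)$ denote the connected component of $U(c)$ that contains $\Lambda_i$; these too are monotone nondecreasing in $c$.

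In the generic situation — for instance when the bounded faces of $\Lambda_1$ and $\Lambda_2$ both contain zeros of $f$ — the sets $A_1(c)$ and $A_2(c)$ are disjoint for $c$ slightly above $\epsilon_2$, while for $c$ exceeding $1$ and every critical value of $|f|$ they coincide, since $U(c)$ is then all of $G$ minus small disks around the poles of $f$, hence connected (as $G$ is simply connected). The value
\[
c^* := \inf\{\,c > \epsilon_2 \,:\, A_1(c) = A_2(c)\,\}
\]
therefore lies strictly above $\epsilon_2$. I would next show that $c^*$ is a critical value of $|f|$ by contradiction: if $\{|f| = c^*\}$ contained no critical points of $f$, then it would be a disjoint union of smooth Jordan curves, and a standard transversality argument — integrating a smooth vector field transverse to the level curves of $|f|$ near $\{|f|=c^*\}$, for instance a mollified $\nabla |f|^2$ on an annular region where $|f|$ stays bounded away from $0$ and $\infty$ — produces a homeomorphism $U(c^*-\delta) \cong U(c^*+\delta)$ preserving the component structure for some $\delta > 0$, contradicting the change in component structure at $c^*$.

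The resulting critical point $w$ with $|f(w)| = c^*$ can be selected as a point where $A_1(c)$ and $A_2(c)$ first touch as $c \nearrow c^*$. Since level components at distinct heights are disjoint, $\Lambda_w \cap (\Lambda_1 \cup \Lambda_2) = \emptyset$; and if $w$ lay inside the bounded face of $\Lambda_1$, then $\Lambda_w$ would be trapped there (connected and unable to cross $\Lambda_1$) and so could not be a limit point of $A_2(c) \subset F_1$. Hence $w \in F_1 \cap F_2$. The preliminary discussion in Section~\ref{sect: Preliminary results.} shows that $\Lambda_w$ is a finite graph with a vertex of degree at least $4$ at $w$, and therefore has at least two distinct bounded faces. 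The components $A_1(c)$ and $A_2(c)$ accumulate, as $c \nearrow c^*$, into two such bounded faces, which must be distinct (since the components were distinct for $c < c^*$) and which contain $\Lambda_1$ and $\Lambda_2$ respectively.

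The principal obstacle is the transversality step in the presence of zeros and poles of $f$, where $|f|$ fails to be smooth: the flow argument must be localized to an annular region bounded away from zeros and poles, and one must verify that no components of $U(c)$ are created or destroyed at the interfaces as $c$ varies across a small interval of regular values. A secondary subtlety is that in non-generic configurations (e.g.\ when the bounded face of some $\Lambda_i$ contains a pole rather than a zero), the set-up has to be adapted — typically by replacing the sublevel set $U(c)$ with the super-level set $\{|f| > c\}$ and running the dual argument on decreasing $c$ — but the overall structure of the proof is unchanged.
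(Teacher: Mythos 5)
Your scheme finds the critical point only when it arises as the merger of two \emph{distinct} sublevel (or, dually, superlevel) components, one containing each curve, and that is precisely what fails in the mixed zero/pole configurations the theorem must cover. Concretely, take $f=B_1/B_2$ on $G=\mathbb{D}$ with $B_1$ of degree $2$ (zeros $a,a'$) and $B_2$ of degree $1$ (pole $b$); one can arrange (e.g.\ $a=r$, $a'=-r$, $b=0$ with $r$ small) that the two critical points in $\mathbb{D}$ have critical values of modulus less than $1$. Let $\Lambda_1$ be a small level curve surrounding only $a$, at a level $\epsilon_1$ below every critical value, and $\Lambda_2$ a small level curve surrounding only $b$, at a level $\epsilon_2$ larger than $1$ and every critical value. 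These are mutually exterior, so the theorem applies, and its conclusion does hold: the critical level curve at the top critical level has $b$ in one bounded face and $a,a'$ in another. But for every $c>\epsilon_2$ the set $\{z\in G:|f(z)|<c\}$ is $G$ minus a closed disk-like neighborhood of $b$, hence connected, so $A_1(c)=A_2(c)$ for \emph{all} $c>\epsilon_2$ and your $c^*$ degenerates to $\epsilon_2$ with no critical value produced; dually, for every $c<\epsilon_1$ the superlevel set $\{|f|>c\}$ is $G$ minus two disjoint closed disks around $a,a'$, again connected, so the ``dual argument'' you propose for poles fails identically. The topological event that creates the separating curve here is a single sublevel component becoming multiply connected (engulfing the pole), not two tracked components merging, and your construction never detects it. A related, smaller issue is your last step: the claim that $A_1(c),A_2(c)$ accumulate into \emph{bounded} faces of $\Lambda_w$ uses the fact that the sublevel side of each edge of a critical level curve lies in a bounded face, which is true for analytic $f$ (maximum principle) but false for meromorphic $f$ --- in the example above the unbounded face of the top critical level curve is on the sublevel side --- so even where the merger occurs, ``different bounded faces'' needs an argument you have not supplied.

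Where both curves surround only zeros your mountain-pass argument is a workable alternative (the initial disjointness of $A_1(c),A_2(c)$ for $c$ slightly above $\epsilon_2$ also deserves a proof, but the maximum principle gives it), and the transversality step at regular values is standard. The paper, by contrast, avoids any case analysis on zeros versus poles: it joins $\Lambda_1$ to $\Lambda_2$ by a path $\gamma$ in $G\cap F_1\cap F_2$, notes via continuity of level sets that the set of parameters $r$ for which $\Lambda_{\gamma(r)}$ has $\Lambda_1$ in a bounded face and $\Lambda_2$ in its unbounded face is nonempty and proper, and shows that at $r_1$, the supremum of this set, $\Lambda_{\gamma(r_1)}$ has $\Lambda_1$ and $\Lambda_2$ in different bounded faces and hence contains a critical point. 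To repair your approach you would have to track, in addition to mergers of the two marked components, changes of connectivity of a single component (creation of ``holes'' around poles), which is essentially a different argument; as written, the proposal has a genuine gap.
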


\begin{proof}
$G\cap F_1\cap F_2$ is open and connected, and it follows that we can find a path $\gamma:[0,1]\to{G}$ such that $\gamma(0)\in{\Lambda_1}$ and $\gamma(1)\in{\Lambda_2}$, and for all $r\in(0,1)$, $\gamma(r)\in{G}\cap{F_1\cap{F_2}}$.  Define $A\subset(0,1)$ be the set such that $r\in{A}$ if and only if $\Lambda_1$ is contained in one of the bounded faces of $\Lambda_{\gamma(r)}$ and $\Lambda_2$ is contained in the unbounded face of $\Lambda_{\gamma(r)}$.  Clearly if $L$ is any level curve of $f$ in $G$ such that $\Lambda_1$ and $\Lambda_2$ are in different faces of $L$, then $L$ intersects the path $\gamma$.  Since the level sets $\{z\in G:|f(z)|=\epsilon\}$ vary continuously as $\epsilon$ varies, it follows that there is a non-critical level curve of $f$ in $G$ which contains $\Lambda_1$ in its bounded face and $\Lambda_2$ in its unbounded face, and another non-critical level curve of $f$ in $G$ which contains $\Lambda_2$ in its bounded face and $\Lambda_1$ in its unbounded face.  Thus if we define $r_1\colonequals\sup(r\in(0,1):r\in{A})$, we have $r_1\in(0,1)$.  By repeated use of the continuity of the level sets of $f$, one may show that $\Lambda_1$ and $\Lambda_2$ are contained in different bounded faces of $\Lambda_{\gamma(r_1)}$.  The fact that $\Lambda_{\gamma(r_1)}$ has multiple bounded faces implies that $\Lambda_{\gamma(r_1)}$ has a vertex, and is thus a critical level curve of $f$.  The point $w$ in the statement of the theorem is any vertex in $\Lambda_{\gamma(r_1)}$.

\end{proof}

Theorem~\ref{thm: Two level curves imply crit. level curve.} gives a clear picture of the general structure of the level curves of $(f,G)$.  Since any two mutually exterior level curves of $f$ in $G$ are separated by a critical level curve of $f$ in $G$, it follows that if we remove the finitely many critical level curves of $f$ from $G$, along with each zero and pole of $f$ in $G$, then each component of the remaining set will be conformally equivalent to an annulus.  In Theorem~\ref{f Conformally Equiv. to a Power of z.} we show further that on each component of the remaining set, $f$ is conformally equivalent to the function $z\mapsto z^n$ for some $n$.  This may be thought of as an extension of the well known fact that if $f_1$ is meromorphic and has a zero (or pole) at $z_1$, then there is a neighborhood $G_1$ of $z_1$, and a conformal map $\phi_1$ from $G_1$ to a disk centered at zero such that $f_1={\phi_1}^{n_1}$ on $G_1$, where $n_1$ is the multiplicity of $z_1$ as a zero (or pole) of $f_1$.  First a definition.

\begin{definition}
Define 

\[
\mathcal{B}\colonequals\{z\in{G}:f'(z)=0\text{ or }f(z)=0\text{ or }f(z)=\infty\},
\]

and define 

\[
\Lambda_{\mathcal{B}}\colonequals\displaystyle\bigcup_{z\in\mathcal{B}}\Lambda_z.
\]

\end{definition}

\begin{theorem}\label{f Conformally Equiv. to a Power of z.}
Let $D$ be a component of $G\setminus\Lambda_{\mathcal{B}}$.  Then the following hold.

\begin{enumerate}
\item
$D$ is conformally equivalent to some annulus $A$ centered at the origin.

\item
Let $E_1$ denote the inner boundary of $D$, and let $E_2$ denote the outer boundary of $D$.  Then there is some $\epsilon_1,\epsilon_2\in[0,\infty]$ such that $\epsilon_1\neq\epsilon_2$, and $|f|\equiv\epsilon_1$ on $E_1$, and $|f|\equiv\epsilon_2$ on $E_2$.

\item\label{item:Description of phi.}  Let $i_1,i_2\in\{1,2\}$ be chosen so that $\epsilon_{i_1}<\epsilon_{i_2}$.  Then there is some $N\in\{1,2,\ldots\}$ such that $A=ann(0;{\epsilon_{i_1}}^{\frac{1}{N}},{\epsilon_{i_2}}^\frac{1}{N})$, and some conformal mapping $\phi:D\to{ann(0;{\epsilon_{i_1}}^{\frac{1}{N}},{\epsilon_{i_2}}^{\frac{1}{N}})}$ such that $f\equiv\phi^M$ on $D$, where $M=\pm{N}$.

\item
The conformal map $\phi$ described in Item~\ref{item:Description of phi.} extends continuously to $E_2$ and to all points in $E_1$ which are not zeros of $f'$.  If $z\in{E_1}$ is a zero of $f'$, and $\gamma:[0,1]\to{G}$ is a path such that $\gamma([0,1))\subset{D}$, and $\gamma(1)=z$, then $\displaystyle\lim_{r\to1^-}\phi(\gamma(r))$ exists.
\end{enumerate}
\end{theorem}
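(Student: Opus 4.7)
The plan is to exploit the absence of critical points, zeros, and poles of $f$ on the open set $D$: on $D$, the function $f$ is analytic and non-vanishing and $\log|f|$ is a bounded harmonic function with no critical points, whose level curves foliate $D$ by simple closed curves.

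I would begin with the structure of $\partial D$ and the topology of $D$, which together give items 1 and 2. Each boundary component $E$ of $D$ is a connected subset of $\Lambda_{\mathcal{B}}\cup\partial G$. Since distinct components of level sets of $|f|$ in $\text{cl}(G)$ are either equal or disjoint, and $E$ is connected, $E$ must lie in a single such component, on which $|f|$ takes a constant value $\epsilon$. Applying Theorem~\ref{thm: Two level curves imply crit. level curve.} to any pair of non-critical level curves inside $D$ shows that the level curves in $D$ are nested simple closed curves varying continuously and monotonically in $|f|$-value between the two extremal boundary levels; this forces $\partial D$ to have exactly two components $E_1,E_2$, with distinct values $\epsilon_1\neq\epsilon_2$ (otherwise $\log|f|$ would be constant on $\partial D$ and hence on $D$ by the maximum principle). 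Therefore $D$ is a bounded doubly connected domain, hence conformally equivalent to some $\text{ann}(0;r_1,r_2)$ by the standard classification of doubly connected planar domains.

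For item 3, pick a conformal equivalence $\phi_0:D\to\text{ann}(0;r_1,r_2)$ and set $\tilde f=f\circ\phi_0^{-1}$. Then $\tilde f$ is analytic and non-vanishing on the annulus with $|\tilde f|$ constant on each bounding circle, so $\log|\tilde f|$ is harmonic with constant boundary values on each circle, forcing $\log|\tilde f|=a\log|z|+b$. Single-valuedness of $\tilde f$ itself on the annulus forces $a$ to be a nonzero integer $M$, so $\tilde f(z)=Cz^M$ for some $C\in\mathbb{C}^*$. Set $N=|M|$, choose $C_0$ with $C_0^M=C$, and define $\phi(z):=C_0\phi_0(z)$; this is a conformal map of $D$ onto an annulus centered at $0$ satisfying $\phi^M=f$. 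A direct computation of the new radii from the boundary constraints $|C|r_1^M,\,|C|r_2^M\in\{\epsilon_{i_1},\epsilon_{i_2}\}$ identifies the target annulus with $\text{ann}(0;\epsilon_{i_1}^{1/N},\epsilon_{i_2}^{1/N})$, the sign $M=\pm N$ absorbing whether $\phi_0$ identifies $E_{i_1}$ with the inner or the outer circle of the model.

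For item 4, continuous extension of $\phi$ to any $z_0\in E_2$ or non-critical $z_0\in E_1$ follows from Schwarz reflection: on such a boundary arc $|f|$ is constant and $f'\neq 0$, so $f$ is locally a biholomorphism to a neighborhood of a boundary arc of $\phi(D)$, and the relation $\phi^M=f$ pins down a single holomorphic branch of $\phi$ extending continuously to $z_0$. At a critical point $z_0\in E_1$ of multiplicity $k$, the standard local form $f(z)-f(z_0)=(z-z_0)^{k+1}u(z)$ with $u(z_0)\neq 0$ shows that $\Lambda_{z_0}$ divides a punctured neighborhood of $z_0$ into $2(k+1)$ sectors, exactly one of which lies in $D$; on that sector a single-valued $M$th root of $f$ exists and extends continuously to $z_0$, giving the existence of $\lim_{r\to 1^-}\phi(\gamma(r))$. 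The main obstacle will be the integrality claim in item 3: one must argue that $a$ is a nonzero integer rather than any real number, which reflects the winding number of $\tilde f$ about $0$ on a core circle of the annulus and is precisely what allows $f$ to be written as an exact $N$th power; a secondary subtlety is verifying that $D$ meets each critical vertex on $E_1$ through a single sector of $\Lambda_{z_0}$ so that a consistent $M$th-root branch can be chosen.
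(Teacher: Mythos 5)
Your overall plan is reasonable, and for item 3 it is genuinely different from the paper: the paper never transports $f$ to a model annulus, but instead builds $\phi$ directly on $D$ as a single-valued branch of $f^{1/N}$, using the argument principle to show that the change of $\arg(f)$ along every level curve in $D$ is $2\pi N$, and getting injectivity from the fact that distinct level curves in $D$ carry distinct values of $|f|$. Your route (uniformize $D$, classify $\tilde f$ on the annulus as $Cz^M$, then correct $\phi_0$ by a constant) works in the generic case, but as written it has a gap precisely in the cases the theorem is phrased to include, namely $\epsilon_{i}\in\{0,\infty\}$: a component $D$ abutting a zero or pole of $f$ has a point inner boundary, the model domain is a punctured disk ($r_1=0$), $\log|f|$ is \emph{not} bounded on $D$ (contrary to your opening sentence), and ``constant boundary value on the inner circle'' has no meaning there. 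You need a separate argument at the puncture (removable singularity or pole for $\tilde f$, then $\tilde f=Cz^{M}$ via the maximum principle applied to $\tilde f(z)/z^{M}$ and its reciprocal using only the outer boundary). You also need to say why $|\tilde f|$ tends to $\epsilon_{1},\epsilon_{2}$ at the corresponding ends of the model annulus (boundary components correspond to ends under $\phi_0$); this is standard but is the step that makes the harmonic-function argument legitimate.

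The more serious gaps are in item 4. First, your extension to $E_2$ assumes $f'\neq0$ on $E_2$, but $E_2$ can contain zeros of $f'$: if the critical level curve bounding $D$ from outside is, say, a figure-eight, the component of $G\setminus\Lambda_{\mathcal{B}}$ inside one loop has the vertex on its \emph{outer} boundary, so your reflection/local-biholomorphism argument does not establish the claimed continuity on all of $E_2$. Second, at a critical point $z_0\in E_1$ your key assertion that exactly one of the $2(k+1)$ sectors lies in $D$ is false in general: for $f(z)=z^2-1$ the component exterior to the lemniscate through $0$ meets the vertex through two opposite sectors, and the corresponding branch $\phi=\sqrt{z^2-1}$ has limits $+i$ and $-i$ along the two sectors --- which is exactly why the theorem claims only path limits (not continuity) at critical points of $E_1$. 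What saves the conclusion is not single-sectoredness but the observation that a path $\gamma$ in $D$ with $\gamma(r)\to z_0$ must eventually remain in a single component of $D\cap B_\delta(z_0)$, on which a branch of the $N$th root extends continuously (equivalently, the paper's remark that the $N$th root continues analytically along $f\circ\gamma$ because $f\neq0$ on $\gamma([0,1))$). So the ``secondary subtlety'' you flagged is not something to verify --- it is false --- and the argument must be rerouted as above; the same local-sector argument, applied at vertices lying on $E_2$, is also what is needed to close the first gap.
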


\begin{proof}
Let $D$ be some component of $G\setminus\Lambda_{\mathcal{B}}$.  Since $D$ does not contain a zero or pole of $f$, the maximum modulus theorem implies that $D^c$ must have at least one bounded component.  Suppose by way of contradiction that $D^c$ has two distinct bounded components.  The boundary of each of these bounded components is a level curve of $f$ in $G$, thus by Theorem~\ref{thm: Two level curves imply crit. level curve.} we may conclude that $D$ contains a zero of $f'$, which is a contradiction because all zeros of $f'$ in $G$ are contained in $\Lambda_{\mathcal{B}}$.  We conclude that $D^c$ has exactly one bounded component, and therefore $D$ is conformally equivalent to an annulus (see for example~\cite{C2}).

Let $E_1$ denote the interior boundary of $D$ and let $E_2$ denote the exterior boundary of $D$.  Each component of the boundary of $D$ is contained in a level curve of $f$ or in $\partial{G}$.  Therefore we may define $\epsilon_1\in[0,\infty]$ to be the value of $|f|$ on $E_1$ and $\epsilon_2\in[0,\infty]$ to be the value of $|f|$ on $E_2$.  By the maximum modulus theorem, since $D$ does not contain a zero or pole of $f$ and $D\subset{G}$, we may conclude that $\epsilon_1\neq\epsilon_2$.  (Assume throughout that $\epsilon_1<\epsilon_2$, otherwise make the appropriate minor changes.)  Similar reasoning implies that there are no two distinct level curves of $f$ in $D$ on which $|f|$ takes the same value.

For any $z\in D$, $\Lambda_z$ is a closed path in $D$, and by the maximum modulus principle the bounded face of $\Lambda_z$ must contain either a zero or pole of $f$, so $\Lambda_z$ must wind around the bounded component of $D^c$.  On the other hand, since $\Lambda_z$ is simple, $\Lambda_z$ winds only once around the bounded component of $D^c$.  Finally, by the argument principle, since $D$ does not contain any zero or pole of $f$ there is some $N\in\mathbb{Z}\setminus\{0\}$ independent of $z$ such that the change in $\arg(f)$ as $\Lambda_z$ is traversed in the positive direction is exactly $2\pi{N}$, and this is true of the boundaries $E_1$ and $E_2$ of $D$ as well.

Therefore if we set $\phi$ to be any $N^{th}$ root of $f$, $\phi$ is analytic on $D$ and injective on any given level curve of $f$ in $D$.  Since there are not two distinct level curves of $f$ in $D$ on which $|f|$ (and therefore $|\phi|$) takes the same value, it follows that $\phi$ is in fact injective on all of $D$.

Since the change in $\arg(f)$ along any level curve of $f$ in $D$ is $2\pi N$, it follows that if $\gamma$ is any closed path in $D$, the change in $\arg(f)$ along $\gamma$ is $k2\pi N$, where $k$ is the number of times $\gamma$ winds around $E_1$.

This fact, in conjunction with the Monodromy theorem, implies that $\phi$ can be extended continuously to all points in $\partial D$, except possibly to the critical points of $f$ in $E_1$.

Finally, if $z$ is any critical point of $f$ in $E_1$, and $\gamma:[0,1]\to G$ is a path with $\gamma([0,1))\subset D$ and $\gamma(1)=z$, then $\ds\lim_{r\to1^-}\phi(\gamma(r))$ exists merely because $f$ is non-zero on $\gamma([0,1))$, so the analytic continuation of the $N^{th}$ root along the path $f\circ\gamma$ exists.
\end{proof}
\section{THE POSSIBLE LEVEL CURVE CONFIGURATIONS OF A MEROMORPHIC FUNCTION}\label{sect:Possible level curve configs.}%

Our goal in this section is to classify the possible configurations of the critical level curves of a generalized finite Blaschke ratio.  We begin by defining an equivalence relation on the set of generalized finite Blaschke ratios.

\begin{definition}
If $(f_1,G_1)$ and $(f_2,G_2)$ are generalized finite Blaschke ratios, and there is some conformal map $\phi:G_1\to{G_2}$ such that $f_1\equiv{f_2}\circ\phi$ on $G_1$, then we say that $(f_1,G_1)$ and $(f_2,G_2)$ are conformally equivalent, and we write $(f_1,G_1)\sim(f_2,G_2)$.
\end{definition}

It is easy to see that $\sim$ is an equivalence relation on the collection of all generalized finite Blaschke ratios, and we make the following definition.

\begin{definition}
Let $H'$ denote the set of all generalized finite Blaschke ratios, and define $H\colonequals{H}'/\sim$.  Let $H'_a\subset{H'}$ denote the set of all generalized finite Blaschke ratios $(f,G)$ such that $f$ is analytic on $G$, and define $H_a\colonequals{H'_a}/\sim$.  We call $(f,G)\in H'_a$ a generalized finite Blaschke product.
\end{definition}

In Section~\ref{sect:Pi respects sim.} we will show that two generalized finite Blaschke ratios are in the same member of $H$ if and only if they have the same level curve structure.  In order to rigorously define the configuration of critical level curves of $(f,G)$, in this section we will define a set $PC$ (for "Possible Level Curve Configurations") whose members will parameterize the possible level curve configurations of a generalized finite Blaschke ratio.  We begin with a definition.

\begin{definition}
A connected finite graph $\Gamma$ embedded in $\mathbb{C}$ is said to be of meromorphic level curve type if there are evenly many, and more than two, edges incident to each vertex of $\Gamma$.  If in addition each edge of $\Gamma$ is incident to the unbounded face of $\Gamma$, we say that $\Gamma$ is of analytic level curve type.
\end{definition}

In Section~\ref{sect: Preliminary results.} we showed that the any level curve of a generalized finite Blaschke ratio would have the property which defines a meromorphic level curve type graph.  If $\Lambda$ is a level curve of a generalized finite Blaschke product, the open mapping theorem and maximum modulus theorem together imply that each edge of $\Lambda$ is incident to the unbounded face of $\Lambda$.  We will use graphs of meromorphic and analytic level curve type to construct our set $PC$.  Throughout we will view these graphs as embedded in $\mathbb{C}$ because we wish to keep track of the orientation of the faces and edges of the graphs about the vertices.  That is, two finite graphs $\Gamma_1$ and $\Gamma_2$ embedded in $\mathbb{C}$ will be considered equivalent if and only if there is an orientation-preserving homeomorphism from $\mathbb{C}$ to $\mathbb{C}$ which maps $\Gamma_1$ to $\Gamma_2$.  Thus, for example, we will not consider the two graphs in Figure~\ref{fig: Non-equivalent graphs.} equivalent to each other.

\begin{figure}[H]
\centering
	\includegraphics[width=0.4\textwidth]{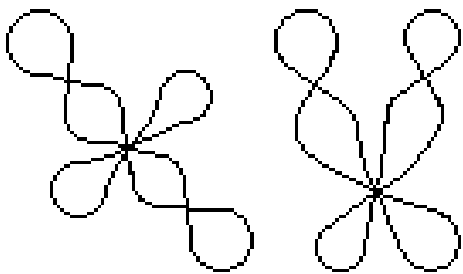}
           \caption{Non-equivalent Graphs.}
	\label{fig: Non-equivalent graphs.}

\end{figure}

We begin by defining a set $P$ whose members will represent the zeros, poles, and critical level curves of a generalized finite Blaschke ratio, along with certain auxiliary data to be defined.  As we describe the features and auxiliary data ascribed to a member of $P$ (and later of $PC$) we will parenthetically remark on the features of a level curve of a generalized finite Blaschke ratio $(f,G)$ which those features and auxiliary data are meant to represent.

There are two types of members of $P$, namely those meant to represent zeros and poles of $(f,G)$ (which we will call "single-point members of $P$") and those meant to represent critical level curves of $(f,G)$ (which we will call "graph members of $P$").  We will begin by describing the single-point members of $P$.

A single-point member $\langle w\rangle_P$ of $P$ consists of the graph consisting of a single vertex $w$ with no edges, to which we add the following pieces of auxiliary data.

\begin{itemize}
\item
We define $H(\langle{w}\rangle_P)$ to be a value in $\{0,\infty\}$ (depending on whether $\langle{w}\rangle_P$ will represent a zero of $f$ or a pole of $f$).

\item
We define $Z(\langle{w}\rangle_P)\colonequals n$ for some $n\in\mathbb{Z}\setminus\{0\}$, positive if $H(\langle w\rangle_P)=0$, negative if $H(\langle w\rangle_P)=\infty$.  (This represents the multiplicity of the point being represented as a zero or pole of $f$.)

\end{itemize}

The resulting object we denote $\langle{w}\rangle_P$.

A graph member $\langle\lambda\rangle_P$ of $P$ consists of a meromorphic level curve type graph $\lambda$, to which we add the following pieces of auxiliary data.

\begin{itemize}
\item
We define $H(\langle\lambda\rangle_P)=\epsilon$ for some value $\epsilon\in(0,\infty)$.  (This represents the value of $|f|$ on $\lambda$.)

\item
If $D$ is a bounded face of $\lambda$, we associate to $D$ an integer $z(D)\in\mathbb{Z}\setminus\{0\}$.  (This represents the number of zeros of $f$ in $D$ minus the number of poles of $f$ in $D$.)  If $D_1,D_2,\ldots,D_k$ denote the bounded faces of $\lambda$, we define $Z(\langle\lambda\rangle_P)\colonequals\displaystyle\sum_{i=1}^kz(D_i)$.  The assignment of $z(D_1),\ldots,z(D_k)$ must be done in such a way that $Z(\langle\lambda\rangle_P)\neq0$ and if $D_1$ and $D_2$ are bounded faces of $\lambda$ which share a common edge, then $z(D_1)$ and $z(D_2)$ are not both positive or both negative.  (This is the case for level curves of $f$ in view of the open mapping theorem.)

\item
We distinguish a finite number of points in $\lambda$ in such a manner that for each bounded face $D$ of $\lambda$, there are $|z(D)|$ distinct distinguished points in $\partial{D}$ (to represent the points in $\lambda$ at which $f$ takes non-negative real values).

\item
If $x\in\lambda$ is a vertex of $\lambda$, we designate a value $a(x)\in[0,2\pi)$.  (This will represent the argument of $f$ at $x$.)  We require that this assignment follows the following rules.
	\begin{itemize}
	\item
For a vertex $x$ of $\lambda$, $a(x)=0$ if and only if $x$ is a distinguished point of $\lambda$.

	\item
If $D$ is a bounded face of $\lambda$, and $z(D)>0$, and $x_1,x_2$ are distinct vertices of $\lambda$ in $\partial{D}$ such that $a(x_1)\geq{a(x_2)}$, then there is some distinguished point $z\in\partial{D}$ such that $x_1,z,x_2$ is written in increasing order as they appear in $\partial{D}$.  (This reflects the fact that if $\lambda$ is a level curve of $f$, and $D$ contains more zeros of $f$ than poles of $f$, then the argument of $f$ is increasing as $\partial{D}$ is traversed with positive orientation.)

	\item
If $D$ is a bounded face of $\lambda$, and $z(D)<0$, and $x_1,x_2$ are distinct vertices of $\lambda$ in $\partial{D}$ such that $a(x_1)\geq{a(x_2)}$, then there is some distinguished point $z\in\partial{D}$ such that $x_2,z,x_1$ is written in increasing order as they appear in $\partial{D}$.  (This reflects the fact that if $\lambda$ is a level curve of $f$, and $D$ contains more poles of $f$ than zeros of $f$, then the argument of $f$ is decreasing as $\partial{D}$ is traversed with positive orientation.)
	\end{itemize}
\end{itemize}

The resulting object, with the above auxiliary data, we denote $\langle\lambda\rangle_P$, and we define $P$ to be the set of all such $\langle\lambda\rangle_P$ and $\langle{w}\rangle_P$.  We also define $P_a\subset{P}$ by $\langle{w}\rangle_P\in{P_a}$ if and only if $Z(\langle{w}\rangle_P)>0$, and $\langle\lambda\rangle_P\in{P_a}$ if and only if $z(D)>0$ for each bounded face $D$ of $\lambda$.  (This $P_a$ is the collection of members of $P$ which may represent the zeros and critical level curves of an analytic function $f$.)

Throughout this paper, $\langle{w}\rangle_P$ will be used to refer to single-point members of $P$, $\langle\lambda\rangle_P$ will be used for graph members of $P$, and $\langle\xi\rangle_P$ will be used when we do not wish to distinguish between the two types of members of $P$.

Each member of $PC$ consists of a collection of members of $P$ arranged in different ways with respect to each other.  There are two aspects of this.  First, which graphs are in which bounded faces of which other graphs, and second, the rotational orientation of each graph with respect to the others.

We do this recursively.  To initialize our recursive construction, a level $0$ member of $PC$ will be a single-point member $\langle w\rangle_P$ of $P$ viewed as a member of $PC$, with no additional data (now written $\langle w\rangle_{PC}$).

Let $n>0$ be given.  Choose some graph member $\langle\lambda\rangle_P$ of $P$.  We will now construct a level $n$ member $\langle\lambda\rangle_{PC}$ of $PC$ as follows.  Let $D$ be a face of $\lambda$.  We have two steps.

\begin{enumerate}
\item
We choose some level $k<n$ member $\langle\xi_D\rangle_{PC}$ of $PC$ and assign it to $D$.  This assignment must satisfy the following restrictions.

\begin{itemize}
\item
$Z(\langle\xi_D\rangle_P)=z(D)$ (this represents the fact that if $\lambda$ is a level curve of $f$ in $G$, then all zeros and poles of $f$ in $D$ are contained in the bounded faces of some critical level curve $\xi_D$ of $f$ in $D$).

\item
If $z(D)>0$, then $H(\langle\xi_D\rangle_P)<H(\langle\lambda\rangle_P)$, and if $z(D)<0$, then $H(\langle\xi_D\rangle_P)>H(\langle\lambda\rangle_P)$ (this follows for level curves of meromorphic functions in view of the maximum modulus theorem).

\item
At least one of the $\langle\xi_D\rangle_{PC}$'s is a level $n-1$ member of $PC$ (to ensure that $\langle\lambda\rangle_{PC}$ was not constructed at any earlier recursive level).
\end{itemize}

This determines recursively which graphs lie in which bounded faces of which other graphs.

\item
We choose a map $g_D$ (which we will call a "gradient map") from the distinguished points of $\lambda$ in $\partial D$ to the distinguished points in $\xi_D$.  (In the context of level curves of a meromorphic function $f$, $g_D(y)=x$ means that $y$ and $x$ are connected by a gradient line of $f$).  This map must satisfy the following restriction.

\begin{itemize}
\item
$g_D$ must preserve the orientation of the distinguished points.  That is, if $y^{(1)},\ldots,y^{(z(D))}$ are the distinguished points of $\lambda$ in $\partial D$ listed in order of their appearance about $\partial D$, then the order in which the critical points in $\xi_D$ appear around $\xi_D$ is exactly $g_D(y^{(1)}),\ldots,g_D(y^{(z(D))})$ (this represents the fact that if $\lambda$ and $\xi_D$ are level curves of a meromorphic function $f$, then the gradient lines of $f$ in $D$ cannot cross since $D$ contains no critical points of $f$).
\end{itemize}
\end{enumerate}

We let $\langle\lambda\rangle_{PC}$ denote the resulting object.  The collection of all such level $0$ $\langle w\rangle_{PC}$ and level $n>0$ $\langle\lambda\rangle_{PC}$ we call $PC$, and we call $PC$ the set of possible level curve configurations.  We define $PC_a\subset{PC}$ to be the collection of members of $PC$ which are constructed entirely using members of $P_a$.  That is, $\langle\lambda\rangle_{PC}\in{PC_a}$ if and only if $\langle\lambda\rangle_P\in{P_a}$, and each member of $PC$ which is assigned to a bounded face of $\lambda$ is in $PC_a$.

We adopt the same convention of $w$, $\lambda$ or $\xi$ for members of $PC$ as we did for members of $P$, namely that level $0$ members of $PC$ we denote by $\langle{w}\rangle_{PC}$, and level $n>0$ members of $PC$ we denote by $\langle\lambda\rangle_{PC}$.  If we do not wish to specify the level of a member of $PC$ we will denote it by $\langle\xi\rangle_{PC}$.

\begin{example}
Following is a visual example of how a member of $PC$ is constructed.  We begin in Figure~\ref{fig:MemberofPCStep1} with a member $\langle\lambda\rangle_P$ of $P$ which consists of a graph $\lambda$ (here with vertex $z$ and bounded faces $D_1$ and $D_2$) along with auxiliary data, such as $H(\langle\lambda\rangle_P)=\frac{1}{2}$, $a(z)=\frac{\pi}{4}$, $z(D_1)=4$ and $z(D_2)=2$, and the marked distinguished points in $\lambda$.

\begin{figure}[H]
\centering
	\includegraphics[width=0.75\textwidth]{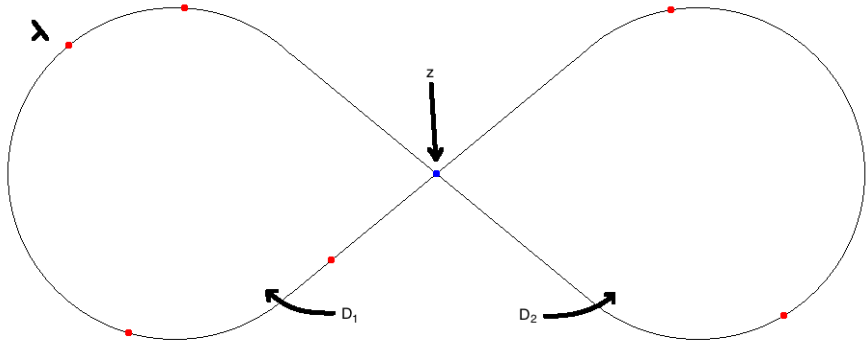}
           \caption{Member of $P$}
	\label{fig:MemberofPCStep1}

\end{figure}

Then, in Figure~\ref{fig:MemberofPCStep2}, we assign a member of $PC$ to each face of $\lambda$.  (Dashed lines represent the action of the gradient maps.)

\begin{figure}[H]
\centering
	\includegraphics[width=0.75\textwidth]{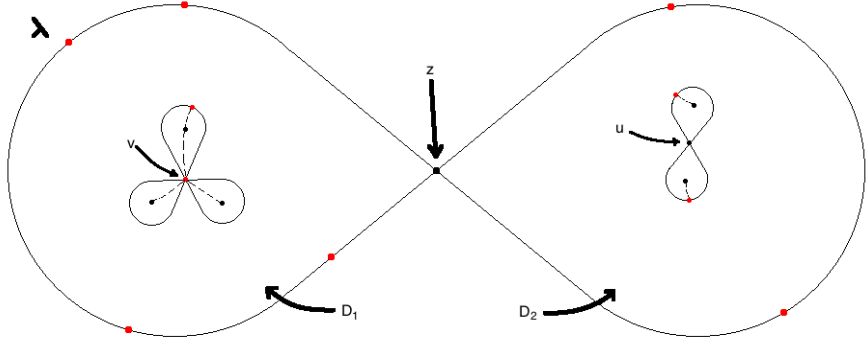}
           \caption{Assignment of members of $PC$ to the $D_1$ and $D_2$}
	\label{fig:MemberofPCStep2}

\end{figure}

Finally, in Figure~\ref{fig:MemberofPCStep3}, we designate a gradient map from the distinguished points in $\lambda$ to the distinguished points in the assigned members of $PC$.

\begin{figure}[H]
\centering
	\includegraphics[width=0.75\textwidth]{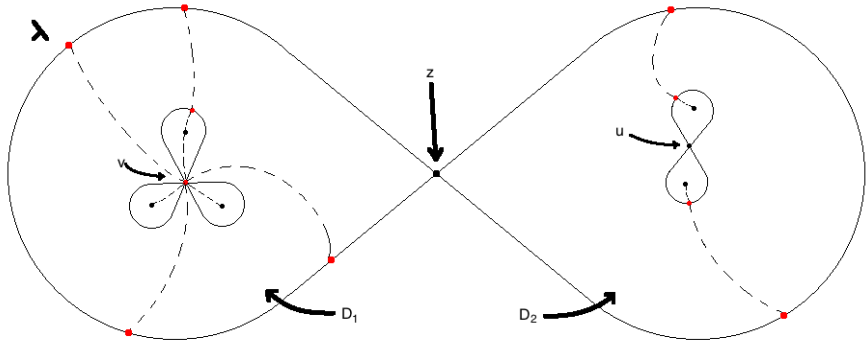}
           \caption{Designation of Gradient Maps}
	\label{fig:MemberofPCStep3}

\end{figure}

The resulting object we denote $\langle\lambda\rangle_{PC}$.

\end{example}

We now define $\Pi:H'\to{PC}$ by defining, for $(f,G)\in H'$, $\Pi(f,G)$ to be the member of $PC$ which may be constructed from the critical level curves, zeros, and poles of $f$ in $G$.  In the next Section~\ref{sect:Pi respects sim.}, we will show that $\Pi$ takes the same value on conformally equivalent members of $H'$, and therefore we may view $\Pi$ as acting on the set of equivalence classes $H$.  We will show conversely that if $\Pi$ takes the same value on two members of $H'$, then they are conformally equivalent, which implies that $\Pi$ acting on $H$ is injective.  It is easy to show that $\Pi(H_a)\subset{PC_a}$.  The major goal of Sections~\ref{sect: Pi is surjective: The generic case.}~and~\ref{sect: Pi is surjective: The general case.} is to show that $\Pi:H_a\to{PC_a}$ is also surjective.
\section{$\Pi$ RESPECTS CONFORMAL EQUIVALENCE}\label{sect:Pi respects sim.}%

Our goal in this section is to show that conformal equivalence of two generalized finite Blaschke ratios may be determined entirely by the data which is preserved by the map $\Pi$.  That is, we have the following theorem.

\begin{theorem}\label{thm:Conformal equivalence iff Pi equivalence.}
For two generalized finite Blaschke ratios $(f_1,G_1)$ and $(f_2,G_2)$, $(f_1,G_1)\sim(f_2,G_2)$ if and only if $\Pi(f_1,G_1)=\Pi(f_2,G_2)$.
\end{theorem}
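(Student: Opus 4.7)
The forward direction is essentially bookkeeping: if $\phi:G_1\to G_2$ is a conformal map with $f_1\equiv f_2\circ\phi$, then $\phi$ carries critical points, zeros, poles, and level curves of $f_1$ to the corresponding objects for $f_2$, preserving $|f|$-values and orientation (since $\phi$ is conformal), arguments of $f$ at critical points (since $f_1=f_2\circ\phi$ and thus $a(x)=a(\phi(x))$), distinguished points, and gradient lines. All the $PC$-data --- graph structure, multiplicities $z(D)$, moduli $H$, vertex arguments $a(x)$, gradient maps $g_D$ --- therefore transfers unchanged, so $\Pi(f_1,G_1)=\Pi(f_2,G_2)$.

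For the reverse direction, the plan is to glue together local conformal equivalences produced by Theorem~\ref{f Conformally Equiv. to a Power of z.}. Assuming $\Pi(f_1,G_1)=\Pi(f_2,G_2)$, the equality of $PC$-members yields a natural bijection between the components of $G_1\setminus\Lambda_{\mathcal{B}_1}$ and those of $G_2\setminus\Lambda_{\mathcal{B}_2}$, where corresponding components have identical (inner,outer) adjacency data in $PC$. For a corresponding pair $D_1\leftrightarrow D_2$, Theorem~\ref{f Conformally Equiv. to a Power of z.} supplies conformal maps $\psi_i:D_i\to A$ with $f_i\equiv\psi_i^M$; the target annulus $A$ and exponent $M$ are determined entirely by $PC$-data (the $H$-values of the boundary members and the quantity $Z$ of the relevant graph member), so they coincide for $D_1$ and $D_2$. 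I would then define $\phi|_{D_1}:=\psi_2^{-1}\circ\psi_1$, a conformal map $D_1\to D_2$ that automatically satisfies $f_1=f_2\circ\phi$ on $D_1$.

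The hard part will be to resolve the rotational ambiguity in each $\psi_i$ so that these piecewise definitions glue into a single continuous (and hence conformal) map on $G_1$. My plan is to induct on the nesting depth of the $PC$-structure, working outward to inward. On the outermost component of each $G_i$, fix $\psi_i$ by requiring a chosen distinguished point on the inner boundary to map to a prescribed point of the inner boundary circle of $A$. For each inner component adjacent across a critical level curve $\lambda$, the gradient map $g_D$ from $PC$ pairs a distinguished point on $\lambda$ with one on the next inward level curve; I would require these to map to the same ray in the new model annulus, which pins down the next $\psi_i$. Item~4 of Theorem~\ref{f Conformally Equiv. to a Power of z.} gives continuous extension of each $\psi_i$ to the closure of $D_i$ off the finitely many critical vertices, so $\phi$ extends continuously from each side of an open edge of $\lambda$; the two extensions agree at distinguished points by the choice of rotations, and on the arcs between distinguished points by monotonicity of $\arg(f_i)$ along $\partial D$ (as enforced by the $PC$ ordering rules on the arguments $a(x)$ of adjacent vertices). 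At vertices themselves, the common values $a(x)$ together with the matching of incident edges force the two one-sided extensions to coincide. The result is a bounded continuous bijection $\phi:G_1\to G_2$ that is holomorphic off a finite set and hence everywhere by Riemann's removable-singularity theorem, with $f_1\equiv f_2\circ\phi$ by construction. The main obstacle is exactly this inductive rotation-matching: the distinguished points, gradient maps, and vertex arguments were built into $PC$ precisely to track the information needed to carry it out.
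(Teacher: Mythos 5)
Your proposal is correct and follows essentially the same route as the paper: both directions match, and your reverse-direction plan (decompose along $\Lambda_{\mathcal{B}}$, use Theorem~\ref{f Conformally Equiv. to a Power of z.} to map corresponding annular components to a common annulus via $N^{\text{th}}$ roots, fix the rotational freedom using the gradient-line/distinguished-point data recorded in $PC$, verify continuity across the critical level curves, and finish with Schwarz reflection/removable singularities) is exactly the paper's argument, differing only in that the paper first defines $\phi$ on the critical level curves themselves and then checks the annular pieces extend to it, rather than matching adjacent pieces edge by edge.
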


\begin{proof}
The forward implication (if $(f_1,G_1)\sim(f_2,G_2)$ then $\Pi(f_1,G_1)=\Pi(f_2,G_2)$) follows straightforwardly from the definition of $\sim$ and the definition of $\Pi$.  Let $\phi:G_1\to G_2$ be a conformal map such that $f_1$ factors as $f_2$ composed with $\phi$.  Then if $\lambda$ is a level curve of $f_1$ in $G_1$, $\phi(\lambda)$ is a level curve of $f_2$ in $G_2$.  If $w$ is a zero or pole or critical point of $f_1$ in $G_1$, then $\phi(w)$ is a zero or pole or critical point respectively of $f_2$ in $G_2$ with the same multiplicity, and $\phi$ carries gradient lines of $f_1$ to gradient lines of $f_2$.  It follows immediately that the construction of $\Pi(f_1,G_1)$ proceeds in exactly the same manner as the construction of $\Pi(f_2,G_2)$, so we proceed to the more difficult backward implication.

Assume that $\Pi(f_1,G_1)=\Pi(f_2,G_2)$, and let $\langle\lambda\rangle_{PC}$ denote this common member of $PC$.  For $i\in\{1,2\}$, define $\mathcal{B}_i\colonequals\{z\in{G_i}:{f_i}'(z)=0\text{ or }f_i(z)=0\text{ or }f_i(z)=\infty\}$.  Define $\Lambda_{\mathcal{B}_i}\subset{G_i}$ by $\Lambda_{\mathcal{B}_i}\colonequals\displaystyle\bigcup_{z\in\mathcal{B}_i}\Lambda_z$.  Let $\langle\xi\rangle_P$ be some member of $P$ used in the construction of $\langle\lambda\rangle_{PC}$.  For $i\in\{1,2\}$, let $\xi_i$ denote the level curve of $f_i$ which gives rise to $\xi$.  Since $\xi_1$ and $\xi_2$ are the same when viewed as members of $P$, there is an orientation preserving homeomorphism $\phi:\xi_1\to\xi_2$.  Furthermore, if $E_1$ is some edge in $\xi_1$, and $E_2$ is the corresponding edge in $\xi_2$, then $E_1$ and $E_2$ contain the same number of distinguished points.  Therefore the change in $\arg(f_1)$ along $E_1$ is the same as the change in $\arg(f_2)$ along $E_2$.  By "reparameterizing" $\phi$ (ie changing the homeomorphism from $E_1$ to $E_2$), we may assume that $f_1\equiv f_2\circ\phi$ on $E_1$, and thus on all of $\xi_1$, and thus on all of $\Lambda_{\mathcal{B}_1}$.

We now wish to extend $\phi$ to $G_1\setminus\Lambda_{\mathcal{B}_1}$ while maintaining the property $f_1\equiv f_2\circ\phi$.  Let $F_1$ be a component of $G_1\setminus\Lambda_{\mathcal{B}_1}$ and let $F_2$ denote the corresponding component of $G_2\setminus\Lambda_{\mathcal{B}_2}$ (that is, the component of $G_2\setminus\Lambda_{\mathcal{B}_2}$ whose boundary is $\phi(\partial F_1)$).  By Theorem~\ref{f Conformally Equiv. to a Power of z.}, $F_1$ and $F_2$ are homeomorphic to annuli.  Let $L_{i,1}$ and $L_{e,1}$ denote the interior and exterior components of $\partial F_1$ respectively (and thus $L_{i,2}$ and $L_{e,2}$ are the interior and exterior components of $\partial F_2$ respectively).
  Since $\Pi(f_1,G_1)=\Pi(f_2,G_2)$, it follows that the number of zeros $f_1$ minus the number of poles of $f_1$ contained in the bounded component of ${F_1}^c$ is equal to the number of zeros of $f_2$ minus the number of poles of $f_2$ in the bounded component of ${F_2}^c$.  Let $N$ denote this common number.  Let $\epsilon_i,\epsilon_e$ denote the magnitude that $|f_1|$ takes on the interior and exterior components of $\partial F_1$ respectively (and thus, by the definition of $\Pi$, $|f_2|$ equals $\epsilon_i$ and $\epsilon_e$ on the interior and exterior components of $\partial F_2$ respectively).  By inspecting the proof of Theorem~\ref{f Conformally Equiv. to a Power of z.}, we see that the functions $\phi_1\colonequals{f_1}^{1/N}$ and $\phi_2\colonequals{f_2}^{1/N}$ conformally map $F_1$ and $F_2$ respectively to the annulus $ann(0;{\epsilon_i}^{1/N},{\epsilon_e}^{1/N})$ (for any choice of the $1/N^\text{th}$ root).  We define $\phi:F_1\to F_2$ by $\phi\colonequals{\phi_2}^{-1}\circ\phi_1$.  We now need to specify which $1/N^{th}$ roots will be used in the definition of $\phi_1$ and $\phi_2$ in order to ensure that $\phi$ defined as above on $F_1$ extends continuously to our map $\phi$ which is already defined on $\partial F_1$.  To this end, select some portion of a gradient line in $F_1$, one of whose endpoints is in $L_{i,1}$ and the other of whose endpoints is in $L_{e,1}$, and on which $\arg(f_1)=0$.  Let $\gamma_1$ denote this path, and let $\gamma_2$ denote the corresponding portion of gradient line in $F_2$.  We make the choice of $1/N^{\text{th}}$ roots so that we have $\arg(\phi_j)=0$ on $\gamma_j$ for $j=1,2$.

These normalizations on $\phi_1$ and $\phi_2$ imply that if $\gamma:[0,1]\to cl(F_1)$ is a path such that $\gamma([0,1))\subset F_1$ and $\gamma(1)\in\partial F_1$, then if we compose this path with $\phi$, we obtain $\ds\lim_{r\to1^-}\phi(\gamma(r))=\phi(\gamma(1))$ (where $\phi(\gamma(1))$ was defined earlier, when we defined the action of $\phi$ on $\Lambda_{\mathcal{B}_1}$).  That is, $\phi$ extends continuously to $\partial F_1$.

Since $\phi_1$ and $\phi_2$ are conformal maps with the same range, $\phi\colonequals{\phi_2}^{-1}\circ\phi_1$ is clearly a conformal map from $F_1$ to $F_2$.  Moreover, $f_1={\phi_1}^N$ on $F_1$ and $f_2={\phi_2}^N$ on $F_2$, thus for $z\in F_1$,

\[
f_2(\phi(z))=\left[\phi_2\left({\phi_2}^{-1}(\phi_1(z))\right)\right]^N=f_1(z).
\]

Extending $\phi$ as described above to each component of $G_1\setminus\Lambda_{\mathcal{B}_1}$, we conclude that $\phi$ is a continuous bijection from $G_1$ to $G_2$, analytic on $G_1\setminus\Lambda_{\mathcal{B}_1}$, and $f_1\equiv f_2\circ\phi$ on $G_1$.  If $z\in\Lambda_{\mathcal{B}_1}\setminus\mathcal{B}_1$, $\Lambda_{\mathcal{B}_1}$ is locally smooth at $z$, so by the Schwartz reflection principle $\phi$ is analytic at $z$.  Finally, $\mathcal{B}_1$ consists of finitely many isolated points, so for $z\in\mathcal{B}_1$, $\phi$ is analytic in a punctured neighborhood of $z$, continuous at $z$, thus analytic at $z$.  Thus $\phi$ is the desired conformal map.
\end{proof}

As mentioned at the end of the last section, the forward implication of Theorem~\ref{thm:Conformal equivalence iff Pi equivalence.} (that $(f_1,G_1)\sim(f_2,G_2)\Rightarrow\Pi(f_1,G_1)=\Pi(f_2,G_2)$) gives us that $\Pi$ acting on the equivalence classes of generalized finite Blaschke ratios ($\Pi:H\to PC$) is well defined.  The backwards implication of Theorem~\ref{thm:Conformal equivalence iff Pi equivalence.} (that $\Pi(f_1,G_1)=\Pi(f_2,G_2)\Rightarrow(f_1,G_1)\sim(f_2,G_2)$) gives us that $\Pi:H\to{PC}$ is injective.
\section{$\Pi:H_a\to PC_a$ IS A BIJECTION: THE GENERIC CASE}\label{sect: Pi is surjective: The generic case.}%

It is easy to see from the definition of $\Pi$ that $\Pi(H_a)\subset{PC_a}$ (one need only use the maximum modulus theorem).  We now begin to prove the following theorem.

\begin{theorem-mannum}{\ref{thm: Pi:H_a to PC_a is bijective.}}
$\Pi:H_a\to PC_a$ is a bijection.
\end{theorem-mannum}

Since $\Pi:H\to PC$ is injective (as shown in the last section), certainly $\Pi:H_a\to PC_a$ is injective as well.  It remains to show that $\Pi(H_a)=PC_a$.  In the course of our proof we will in fact show a stronger result, that we may consider only the action of $\Pi$ on the members of $H_a$ which have representatives $(f,G)$ such that $f$ is a polynomial, and $\Pi$ maps these members of $H_a$ surjectively onto $PC_a$.  This will also immediately imply Corollary~\ref{cor: Polynomial equivalence.}, that every equivalence class of generalized finite Blaschke products contains a polynomial function.  First several definitions.

\begin{definition}
For $G\subset\mathbb{C}$ an open simply connected set, and $f:G\to\mathbb{C}$ analytic on $G$, define $G_f\colonequals\{z\in{G}:|f(z)|<1\}$.
\end{definition}

\begin{definition}
Let ${H_p}'$ be the set of all generalized finite Blaschke ratios $(f,G)$ such that $(f,G)\sim(p|_{G_p},G_p)$ for some $p\in\mathbb{C}[z]$.  Henceforth we will write $(p,G_p)$ for $(p|_{G_p},G_p)$.  We also define $H_p\colonequals{H_p}'/\sim$.
\end{definition} 

Since $H_p\subset{H_a}$, if we can show that $\Pi(H_p)=PC_a$, then we are done.  That is, we wish to show that for any $\langle\Lambda\rangle_{PC}\in{PC}$, there is a polynomial $p\in\mathbb{C}[z]$ such that $\Pi(p,G_p)=\langle\Lambda\rangle_{PC}$.  Our method will be to partition $H_p$ by critical values.  That is, a partition set will be the collection of members of $H_p$ which have a given list of critical values.  We then define a notion of critical values for members of $PC_a$, and partition $PC_a$ by these critical values.  We then show that for any finite list of critical values, $\{v_1,\ldots,v_{n-1}\}$, $\Pi$ maps the partition set of $H_p$ corresponding to this list of critical values onto the partition set of $PC_a$ corresponding to the same list of critical values.  Having shown this, we will conclude that $\Pi(H_p)=PC_a$, and thus $\Pi(H_a)=PC_a$.

We begin by building up some notation for dealing with the critical values we will be working with.

\begin{definition}
For $n$ a positive integer, define $V_n\subset\mathbb{C}^n$ by $V_n\colonequals\{v=(v^{(1)},\ldots,v^{(n)})\in\mathbb{C}^n:0\leq|v^{(1)}|\leq\cdots\leq|v^{(n)}|<1\}$.  Define also $U_n\subset{V_n}$ by $V_n\colonequals\{v=(v^{(1)},\ldots,v^{(n)})\in\mathbb{C}^n:0<|v^{(1)}|<\cdots<|v^{(n)}|<1\}$.  Finally, define $V\colonequals\ds\bigcup_{n=1}^{\infty}V_n$ and $U\colonequals\ds\bigcup_{n=1}^{\infty}U_n$.
\end{definition}

We now define the partition of $H_p$ which we will use.

\begin{definition}
For $n\geq2$ an integer, and $v\in{V_{n-1}}$, let ${H_{p,v}}'$ denote the subset of members $(f,G)\in{H_p}'$ such that the critical values of $f$ are exactly $v^{(1)},v^{(2)},\ldots,v^{(n-1)}$.  Furthermore, define $H_{p,v}\colonequals{H_{p,v}}'/\sim$.  Let $|H_{p,v}|$ denote the number of elements of $H_{p,v}$.
\end{definition}

We will now work out a notion of critical values for a member of $PC_a$.  In essence, the critical values of a $\langle\xi\rangle_{PC}\in PC_a$ are the critical values of any member of $\Pi^{-1}(\langle\xi\rangle_{PC})$.  Since we do not yet know that this set is non-empty, we will have to define the critical values of $\langle\xi\rangle_{PC}$ directly from $\langle\xi\rangle_{PC}$.  We begin with some definitions having to do with graphs.

\begin{definition}
For a meromorphic level curve type graph $\Lambda$ embedded in $\mathbb{C}$, and $w$ a vertex of $\Lambda$, we let $m(w)$ denote the number of edges of $\Lambda$ incident to $z$.  Furthermore, we say that $w$ is a vertex of $\Lambda$ with multiplicity $\frac{m(w)}{2}-1$.
\end{definition}

Recall that if $\Lambda$ is a meromorphic level curve type graph, and $w$ is a vertex of $\Lambda$, then $m(w)$ is even, and greater than $2$.  Note also that if $(f,G)$ is a generalized finite Blaschke ratio, and $w\in{G}$ is a zero of $f'$ with multiplicity $k$, then $f$ is $(k+1)$-to-$1$ in a neighborhood of $w$.  Therefore there are $2(k+1)$ edges of $\Lambda_w$ which are incident to $w$.  Thus the multiplicity of $w$ as a vertex of $\Lambda_w$ is exactly $\frac{2(k+1)}{2}-1=k$.

\begin{definition}
Let $\langle\Lambda\rangle_{PC}\in PC_a$ be given.  If $\langle{w}\rangle_{PC}$ is one of the level $0$ members of $PC_a$ used to form $\langle\Lambda\rangle_{PC}$, then we say that $0$ is a critical value of $\langle\Lambda\rangle_{PC}$ with multiplicity $Z(\langle{w}\rangle_P)-1$.  Suppose that $\langle\lambda\rangle_P$ is a member of $P$ used to build $\langle\Lambda\rangle_{PC}$.  If $w$ is a vertex of $\lambda$, we say that $H(\langle\lambda\rangle_P)e^{ia(w)}$ is a critical value of $\langle\Lambda\rangle_{PC}$ of multiplicity equal to the multiplicity of $w$ as a vertex of $\lambda$.
\end{definition}

With this notion of critical values of a member of $PC_a$ built up, we may now partition $PC_a$ as follows.

\begin{definition}
For $v=(v^{(1)},\ldots,v^{(n)})\in{V_{n}}$, define $PC_{a,v}$ to be the collection of members of $PC_a$ whose critical values listed according to multiplicity are $v^{(1)},\ldots,v^{(n)}$.  Let $|PC_{a,v}|$ denote the number of elements of $PC_{a,v}$.
\end{definition}

From the definition of critical values of a member of $PC_a$, it should be clear that $\Pi(H_{p,v})\subset{PC_{a,v}}$.  To show that $\Pi:H_p\to PC_a$ is surjective, we show that $\Pi:H_{p,v}\to PC_{a,v}$ is surjective for each $v\in{V}$.  In Subsection~\ref{subsect: Proof of partial surjectivity.}, we prove this result for each $v$ in the dense subset $U$ of $V$.

\subsection{PARTIAL SURJECTIVITY LEMMA}\label{subsect: Proof of partial surjectivity.}

\begin{lemma}\label{lemma: Partial surjectivity.}
For any $v\in U$, $\Pi:H_{p,v}\to PC_{a,v}$ is surjective.
\end{lemma}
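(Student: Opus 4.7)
My plan is to induct on $n$, the length of $v$, realizing configurations by peeling off the outermost critical level curve at each step.

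Base case $n = 1$: For $v = (v^{(1)})$ with $0 < |v^{(1)}| < 1$, there is essentially one configuration in $PC_{a, v}$: a figure-eight critical level curve of modulus $|v^{(1)}|$, with its simple critical vertex at argument $\arg(v^{(1)})$ and one zero in each of its two bounded lobes. The quadratic $p(z) = z^2 + v^{(1)}$ realizes it. Indeed, $G_p = \{|z^2 + v^{(1)}| < 1\}$ is simply connected because the image disk $\{|w + v^{(1)}| < 1\}$ contains the branch value $0$ of $z \mapsto z^2$; the unique critical point $0$ has critical value $v^{(1)}$; the critical level curve is the preimage under $z \mapsto z^2 + v^{(1)}$ of a circle through $0$, hence a figure-eight through $0$ with zeros $\pm \sqrt{-v^{(1)}}$ in its lobes. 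Verifying that the auxiliary data (argument, distinguished points, gradient map) matches the configuration completes this case.

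Inductive step: Given $v \in U_n$ with $n \geq 2$ and $\langle \Lambda \rangle_{PC} \in PC_{a, v}$, I would first observe that the critical level curve $\lambda$ at modulus $|v^{(n)}|$ is the unique outermost graph in $\langle \Lambda \rangle_{PC}$. This follows from Theorem~\ref{thm: Two level curves imply crit. level curve.} combined with the max modulus principle: any two mutually exterior critical level curves are separated by another critical level curve of strictly greater modulus, so the graph at the maximum modulus $|v^{(n)}|$ cannot be enclosed. Because $v^{(n)}$ is simple, $\lambda$ is a figure-eight with two bounded faces $D_1, D_2$. The sub-configurations assigned to $D_1, D_2$ in $\langle \Lambda \rangle_{PC}$ lie in $PC_{a, v_1}, PC_{a, v_2}$ for sub-lists $v_1, v_2 \in U$ of $(v^{(1)}, \ldots, v^{(n-1)})$, so the inductive hypothesis provides polynomials $q_1, q_2$ realizing them. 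Since no critical value of $q_i$ has modulus $|v^{(n)}|$, the sub-level set $E_i \colonequals \{|q_i| < |v^{(n)}|\}$ is simply connected with smooth Jordan boundary, and $(q_i|_{E_i}, E_i)$ realizes the sub-configuration at outer modulus $|v^{(n)}|$.

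To build $p$, I would place conformal copies of $(q_i|_{E_i}, E_i)$ into the faces $D_i$ and map the outer annular region $G_p \setminus \overline{D_1 \cup D_2}$ conformally onto the target annulus $\{|v^{(n)}| < |w| < 1\}$ via a power-type map of total winding number $n + 1$. The boundary data required for the Riemann maps $\phi_i : E_i \to D_i$ is fixed by the argument values $a(\cdot)$, distinguished points, and gradient maps $g_D$ recorded in $\langle \Lambda \rangle_{PC}$. Once the pieces are assembled, Schwarz reflection along the smooth arcs of $\lambda$ (where $|p| \equiv |v^{(n)}|$) yields analyticity across $\lambda \setminus \{z_*\}$, the local quadratic behavior $p(z) \approx v^{(n)} + c(z - z_*)^2$ handles the critical vertex $z_*$, and a further Schwarz reflection across $\partial G_p$ extends $p$ to a meromorphic function on $\mathbb{C}$ whose only singularity is a pole of order $n + 1$ at $\infty$, forcing $p$ to be a polynomial.

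The main obstacle will be verifying the coherence of the gluing: the Riemann maps $\phi_i$ and the conformal map of the outer annulus must be chosen compatibly so that the assembled function is globally single-valued and truly analytic across the gluing locus $\lambda$, rather than piecewise analytic with phase monodromy. The combinatorial data in $\langle \Lambda \rangle_{PC}$ --- vertex arguments, distinguished-point placements, and the gradient-map constraints --- is precisely what makes the phases line up, after which the construction can be completed and $\Pi(p, G_p) = \langle \Lambda \rangle_{PC}$ verified directly.
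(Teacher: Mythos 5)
Your construction has a genuine gap at the gluing step, and it is the heart of the matter. As written, the objects you glue along do not yet exist: the faces $D_1,D_2$, the figure-eight $\lambda$, and the domain $G_p$ are subsets of $\mathbb{C}$ only after the polynomial $p$ has been produced, so ``Riemann maps $\phi_i:E_i\to D_i$'' and ``Schwarz reflection along the smooth arcs of $\lambda$'' presuppose an ambient complex structure and an analytic gluing curve that you do not have. To make this rigorous you would have to perform an abstract conformal gluing (attach $E_1$, $E_2$ and the outer annular piece, carrying $\zeta\mapsto\zeta^{N}$, along boundary identifications dictated by the argument data, distinguished points, and gradient maps, then uniformize the resulting simply connected surface) --- i.e.\ a Riemann-existence-theorem style argument for a branched cover with prescribed critical values and combinatorics. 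That argument is essentially the entire content of the lemma and is not supplied; ``verifying the coherence of the gluing'' is not a routine check but the whole proof. Moreover, even granting the gluing, what you obtain is a generalized finite Blaschke product $(f,G)$, not a member of $H_{p,v}$: your final claim that Schwarz reflection across $\partial G_p$ yields a meromorphic function on $\mathbb{C}$ with a single pole at $\infty$ is false in general --- reflection across an analytic boundary curve extends the function only to a collar, and the paper's own example $f(z)=\frac{1}{.6}\left(z^2+\frac{9}{25}\right)e^z$ on a tract is a generalized finite Blaschke product that is not the restriction of any polynomial. The fact that every such class contains a polynomial is exactly Corollary~\ref{cor: Polynomial equivalence.}, which is deduced \emph{from} this lemma, so it cannot be assumed here. (One can get a polynomial directly by also gluing in the unbounded piece $\{|\zeta|>|v^{(n)}|^{1/N}\}$ and arguing that the glued surface is parabolic, hence $\mathbb{C}$, and the glued map is a proper finite-degree self-map of $\mathbb{C}$; but that again rests on carrying out the abstract gluing you have not done.)

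For comparison, the paper avoids any explicit construction: it uses injectivity of $\Pi$ (Theorem~\ref{thm:Conformal equivalence iff Pi equivalence.}) together with the count $|H_{p,v}|=n^{n-3}$ of Beardon--Carne--Ng~\cite{BCN} for $v\in U$, and then a recursive combinatorial count --- partitioning by how many critical values fall in each lobe of the outermost figure eight, counting sub-configurations and gradient maps, and summing a known binomial identity --- to show $|PC_{a,v}|=n^{n-3}$ as well, so surjectivity follows by pigeonhole. Your observation that, for $v\in U$, every graph in the configuration is a figure eight and the outermost one is unique is correct and is also the starting point of the paper's count, but the realization itself is obtained there by counting, not by welding.
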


\begin{proof}

Fix some positive integer $n\geq2$ and some $v_0=({v_0}^{(1)},\ldots,{v_0}^{(n-1)})\in U_{n-1}$.  Since $\Pi$ is injective, it suffices to show that $|H_{a,v_0}|\geq|PC_{a,v_0}|$.  We will in fact show that $|H_{p,v_0}|=|PC_{a,v_0}|$, which immediately implies the desired result.

In a paper by Beardon, Carne, and Ng, \cite{BCN} it was shown that for $v\in V_{n-1}$, if $n=2$ then $|H_{p,v}|=1$, and if $n\geq3$ then $H_{p,v}$ contains exactly $n^{n-3}$ elements according to multiplicity, where multiplicity arises through a use of Bezout's theorem.  As an easy corollary to what was shown in that paper, one may prove that for $v\in{U_{n-1}}$, if $n=2$ then $|H_{p,v}|=1$, and if $n\geq3$, then $H_{p,v}$ contains exactly $n^{n-3}$ {\em distinct} members (that is, multiplicity plays no role).

We now wish to show that $PC_{a,v_0}$ has exactly $1$ member if $n=2$, and $n^{n-3}$ distinct members if $n\geq3$.  Individual arguments must be made for all values of $n\leq5$.  For the sake of brevity we will make only the general argument covering all values of $n\geq6$, however the arguments covering the small values of $n$ only require slight alterations of the general counting argument.

\begin{case}\label{case: n geq 6.}
$n\geq 6$.
\end{case}

Since $0<|{v_0}^{(1)}|<\cdots<|{v_0}^{(n-1)}|$, if $\langle\Lambda\rangle_{PC}\in{PC_{a,v_0}}$, and $\langle\lambda\rangle_P$ is a member of $P$ used in the construction of $\langle\Lambda\rangle_{PC}$, then $\lambda$ contains only a single vertex counting multiplicity.  (Since if there were two vertices in $\lambda$, each would give rise to a critical value, and these critical values would have the same modulus.)  If $w$ is the vertex of $\lambda$, then $H(\langle\lambda\rangle_P)e^{ia(w)}$ is a critical value of $\langle\Lambda\rangle_{PC}$ with multiplicity $1$, so by definition of multiplicity, the number of edges of $\lambda$ which meet at $w$ is $2*(1+1)=4$.  There is only one analytic level curve type graph which has a single vertex at which exactly $4$ edges meet, namely the "figure eight" graph.  (Recall that an edge is counted twice if both ends meet at the vertex.)  Thus each graph used in the construction of $\langle\Lambda\rangle_{PC}$ is this figure eight graph.

We will use an induction argument to count the number of members of $PC_{a,v_0}$.  In order to do this, it will be helpful to have an ordering on the members of $P$ used to construct a given member of $PC_{a}$, which we define now.

\begin{definition}
Fix some $\langle\Lambda\rangle_{PC}\in{PC_a}$, and let $\langle\xi_1\rangle_{PC},\ldots,\langle\xi_n\rangle_{PC}$ with $n\geq2$ be the members of $PC_a$ which are used in constructing $\langle\Lambda\rangle_{PC}$.  Then we say $\xi_i\prec\Lambda$ with respect to $\langle\Lambda\rangle_{PC}$ for each $i\in\{1,2,\ldots,n\}$.  Furthermore, if some $\langle\xi_i\rangle_{PC}$ has been associated to some bounded face of some $\xi_j$ while constructing $\langle\Lambda\rangle_{PC}$ for some $i,j$, then we say $\xi_i\prec\xi_j$ with respect to $\langle\Lambda\rangle_{PC}$ (this "with respect to $\langle\Lambda\rangle_{PC}$" will usually be suppressed when the member $\langle\Lambda\rangle_{PC}$ in question is clear).  We extend this to be a transitive relation.  That is, if $\xi_{i_1}\prec\xi_{i_2}\prec\cdots\prec\xi_{i_k}$ for some $2\leq{k}\leq{n}$, then we say $\xi_{i_1}\prec\xi_{i_k}$.
\end{definition}

We will also apply the $\prec$-ordering to the bounded faces of the graphs used to construct the members of $PC_a$ as follows.

\begin{definition}
Let $\langle\Lambda\rangle_{PC}$ be a member of $PC_a$, and let $D$ denote one of the bounded faces of $\Lambda$.  If $\langle\xi\rangle_{PC}$ is the member of $PC_a$ associated to $D$ in the construction of $\langle\Lambda\rangle_{PC}$, then we say $\xi\prec{D}$.  We extend this as follows.  If $\langle\xi_1\rangle_{PC},\ldots,\langle\xi_k\rangle_{PC}$ were used in the construction of $\langle\Lambda\rangle_{PC}$, and $\xi_1\prec\cdots\prec\xi_k\prec{D}$, then we say $\xi_1\prec{D}$.
\end{definition}

\begin{note}
Let $\langle\Lambda\rangle_{PC}$ be any member of $PC_a$, let $\langle\lambda\rangle_{PC}$ be any member of $PC_a$ used in the construction of $\langle\Lambda\rangle_{PC}$, and let $D$ be any bounded face of $\lambda$.  An easy induction argument gives that the number of level zero members $\langle{w}\rangle_{PC}$ of $PC$ such that $w\prec{D}$ is exactly $z(D)$ (where these single point members of $P$ are counted according to multiplicity), and that the number of critical values of $\langle\Lambda\rangle_{PC}$ which come from members of $PC_a$ associated to $D$ is exactly $z(D)-1$.
\end{note}

\begin{definition}
Let $\langle\Lambda\rangle_{PC}$ be any member of $PC_{a,v_0}$. As noted before, since $v_0\in U_{n-1}$, $\Lambda$ must be the "figure eight" graph.  Let $D_1$ denote the bounded face of $\Lambda$ from which fewer critical values come.  Let $D_2$ denote the other one.  That is, the naming is done so that $z(D_1)\leq{z(D_2)}$.  (If both bounded faces of $\Lambda$ give rise to the same number of critical values then this naming is arbitrary.)
\end{definition}

$\langle\Lambda\rangle_{PC}$ has $n-1$ total critical values, one of which comes from the vertex of $\Lambda$, so $n-2$ of them come from the two regions $D_1$ and $D_2$.  This together with the fact that $z(D_1)\leq{z(D_2)}$ immediately gives that the possible values for $z(D_1)-1$ to take are exactly $\{k\in\mathbb{Z}:0\leq{k}\leq\frac{n-2}{2}\}$.

Our general way of counting the number of elements in $PC_{a,v_0}$ when $n\geq6$ will be to partition $PC_{a,v_0}$ by the value $z(D_1)-1$ takes.  For a given value of $z(D_1)-1$, we find out how many ways the $z(D_1)-1$ different critical values of $\langle\Lambda\rangle_{PC}$ which come from vertices of graphs in $D_1$ may be chosen from the $n-2$ critical values available to come from $D_1$ and $D_2$ (which is of course $\binom{n-2}{z(D_1)-1}$).  For that choice of critical values coming from $D_1$, we count the number of members of $PC_a$ which may be associated to $D_1$ and the number which may be associated to $D_2$ (a natural induction step).  We then count the number of choices of the gradient maps $g_{D_1}$ and $g_{D_2}$ (which are $z(D_1)$ and $z(D_2)$ respectively, except in the case where $z(D_1)-1=1$ as we will see).  We then multiply these numbers to find the number of members of $PC_{a,v_0}$ with the given value of $z(D_1)-1$.  Finally, we take the sum over all possible choices of $z(D_1)-1$.

Assume first that $n$ is odd.  Then $z(D_1)-1$ can take any value in the set

\[
\left\{0,1,\ldots,\frac{(n-2)-1}{2}=\frac{n-3}{2}\right\}.
\]

We begin by calculating the number of members of $PC_{a,v_0}$ for which $z(D_1)-1=0$.  Here we use the fact that there is exactly one member of $PC_a$ which has no critical values, namely the level $0$ member $\langle w\rangle_{PC}$ such that $Z(\langle w\rangle_P)=1$.  Since $Z(D_1)=1$, there is exactly one choice of gradient map (ie the choice which maps the single distinguished point in $\partial D_1$ to $w$).  By the induction step, there is $(n-2+1)^{(n-2+1)-3}$ different members of $PC_a$ which have the requisite $n-2$ critical values, and may thus be assigned to $D_2$, and there are $Z(D_2)=n-2+1$ choices of the gradient map $g_{D_2}$.  Thus we have that the number of members of $PC_{a,v_0}$ for which $z(D_1)-1=0$ is

\[
\binom{n-2}{0}*1*1*(n-2+1)^{(n-2+1)-3}*(n-2+1)=\binom{n-2}{0}(n-1)^{n-3}.
\]

We now calculate the number of members of $PC_{a,v_0}$ for which $z(D_1)-1=1$.  First, there is $\binom{n-2}{1}$ ways of choosing the critical value which will come from $D_1$.  By the induction step, there is exactly one member of $PC_a$ which has a given single critical value.  $Z(D_1)=2$, so at first it appears that there are two choices of $g_{D_1}$, however the member of $PC_a$ which has a given single critical value has symmetry such that if the two edges are interchanged, one obtains the same member of $PC_a$.  Since members of $PC_a$ are formed modulo orientation preserving homeomorphism, we conclude that the two choices of $g_{D_1}$ are actually equivalent.  Thus there is actually only one choice of $g_{D_1}$.  $z(D_2)=n-3$, so by the induction step, there are $(n-3+1)^{(n-3+1)-3}$ different members of $PC_a$ which have the requisite $n-3$ critical values which come from $D_2$, and may thus be assigned to $D_2$.  Finally, there are $n-2$ choices of the gradient map $g_{D_2}$.  Thus we have that the number of members of $PC_{a,v_0}$ for which $z(D_1)-1=1$ is

\[
\binom{n-2}{1}*1*1*(n-3+1)^{(n-3+1)-3}*(n-2)=\binom{n-2}{1}(n-2)^{n-4}.
\]

Using the same reasoning, if $2\leq{i}\leq\frac{n-3}{2}$, then the number of members of $PC_{a,v_0}$ for which $z(D_1)-1=i$ is

\[
\binom{n-2}{i}*(i+1)^{i+1-3}*(i+1)*(n-2-i+1)^{(n-2-i+1)-3}*(n-2-i+1).
\]

Simplifying this, we conclude that the number of members of $PC_{a,v_0}$ for which $z(D_1)-1=i$ is

\[
\binom{n-2}{i}(i+1)^{i-1}(n-i-1)^{n-i-3}.
\]

Hence we get that

\[
|PC_{a,v_0}|=\binom{n-2}{0}(n-1)^{n-3}+\binom{n-2}{1}(n-2)^{n-4}+\\
\displaystyle\sum_{i=2}^{\frac{n-3}{2}}\binom{n-2}{i}(i+1)^{i-1}(n-i-1)^{n-i-3}.
\]

However,

\[
\binom{n-2}{0}(n-1)^{n-3}=\binom{n-2}{0}(0+1)^{0-1}(n-0-1)^{n-0-3},
\]

and

\[
\binom{n-2}{1}(n-2)^{n-4}=\binom{n-2}{1}(1+1)^{1-1}(n-1-1)^{n-1-3},
\]

so we may include these terms in the sum.  That is,

\[
\left|PC_{a,v_0}\right|=\displaystyle\sum_{i=0}^{\frac{n-3}{2}}\binom{n-2}{i}(i+1)^{i-1}(n-i-1)^{n-i-3}.
\]

By performing the substitution $m=n-2$ and using a bit of arithmetic manipulation, we obtain

\[
|PC_{a,v_0}|=\displaystyle\sum_{i=0}^{m-1}\frac{1}{2}\binom{m}{i}(i+1)^{i-1}(m-i+1)^{m-i-1}.
\]

This finite series appears in the text~\cite[page 73]{Ro}, from which we may conclude that $|PC_{a,v_0}|=(m+2)^{m-1}=n^{n-3}$.  Thus we have the desired result when $n$ is odd.

If $n$ is even, an almost identical argument gives the desired result.  The only difference is that the number of critical values which come from $D_1$ (namely $z(D_1)-1$) will be one of the numbers $\left\{0,1,\ldots,\frac{n-2}{2}\right\}$.  When $z(D_1)-1=\frac{n-2}{2}$, we have that $z(D_1)-1=z(D_2)-1$, so the choice of $D_1$ is arbitrary.  That is, we are overcounting by a factor of $2$.  Therefore in the sum we use to calculate $|PC_{a,v_0}|$, we need to include a factor of $\frac{1}{2}$ for this term.  The rest of the argument is essentially the same, and we reach the same conclusion, that $|PC_{a,v_0}|=n^{n-3}$.

We now have that, in either case, $|PC_{a,v_0}|=n^{n-3}=|H_{p,v_0}|\leq|H_{a,v_0}|$, and $\Pi:H_{a,v_0}\to{PC_{a,v_0}}$ is injective, so we conclude that $\Pi:H_{a,v_0}\to{PC_{a,v_0}}$ is also surjective.

\end{proof}

\subsection{POSSIBILITIES FOR EXTENDING THE RESULT OF LEMMA~\ref{lemma: Partial surjectivity.}}\label{subsect: Continuity proof difficulties.}

In the last subsection, we proved that the map $\Pi:H_{a,v}\to PC_{a,v}$ is surjective for all $v$ in a generic subset of $V$, namely for all $v\in U$.  Certainly any point in $V$ is a limit point of members of $U$, and $H_a=\displaystyle\bigcup_{v\in V}H_{a,v}$, so if we knew that $\Pi:H_a\to PC_a$ were continuous, we might be able to use this fact to extend the result of Lemma~\ref{lemma: Partial surjectivity.} to the full strength of Theorem~\ref{thm: Pi:H_a to PC_a is bijective.}.

However to discuss continuity of $\Pi$ would require that a tractable topology be found for $PC_a$.  While a topology for $PC_a$ may be developed, it is at present very unweildy, and in fact the proof which we will give for Theorem~\ref{thm: Pi:H_a to PC_a is bijective.} in Section~\ref{sect: Pi is surjective: The general case.} is in a sense a weakened version of what would be necessary to show that $\Pi$ is continuous.

Another idea would be to adapt the counting method we employed in the proof of Lemma~\ref{lemma: Partial surjectivity.}.  For $n\geq3$, and $v\in V_{n-1}$, $H_{p,v}$ has $n^{n-3}$ elements by~\cite{BCN}, but for $v\notin U$, this calculation now includes a notion of multiplicity which occurs from a use of Bezout's Theorem.  The steps for a counting proof of the full strength of Theorem~\ref{thm: Pi:H_a to PC_a is bijective.} would be as follows.

\begin{enumerate}
\item
Develop a notion of multiplicity for a member of $PC_a$,  (This seems very doable.)

\item
Show that counting according to the multiplicity from the last item, $PC_{a,v}$ contains $n^{n-3}$ members.  (This seems harder, but still perhaps doable.)

\item\label{item: Counting argument step 3.}
Show that the multiplicity which is accorded to a member $(f,G)$ of $H_a$ in the use of Bezout's theorem from~\cite{BCN} is equal to the multiplicity assigned to $\Pi(f,G)$ as a member of $PC_a$.  (This connection seems very difficult.)
\end{enumerate}

The desired result follows directly from the above steps however, as Step~\ref{item: Counting argument step 3.} remains elusive, we proceed to a direct proof in Section~\ref{sect: Pi is surjective: The general case.}
\section{$\Pi:H_a\to PC_a$ IS A BIJECTION: THE GENERAL CASE}\label{sect: Pi is surjective: The general case.}%

Our goal in this section is to complete the proof of the following theorem.

\begin{theorem}\label{thm: Pi:H_a to PC_a is bijective.}
$\Pi:H_a\to PC_a$ is a bijection.
\end{theorem}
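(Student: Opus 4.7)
The plan is to extend Lemma~\ref{lemma: Partial surjectivity.} to all of $PC_a$ by a perturbation-and-limit argument. Since $\Pi:H_a\to PC_a$ is already injective by Theorem~\ref{thm:Conformal equivalence iff Pi equivalence.}, and $\Pi(H_{p,v})=PC_{a,v}$ whenever $v\in U$, it suffices to show surjectivity onto $PC_{a,v_0}$ for degenerate tuples $v_0\in V\setminus U$ (consecutive critical values sharing a modulus, or $v_0^{(1)}=0$). Fix $\langle\Lambda\rangle_{PC}\in PC_{a,v_0}$ whose ``degree'' (the total number of level-$0$ members of $PC_a$ used, counted with multiplicity) is $n$.

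The first task is to describe, for each small $\epsilon>0$, a perturbation $v_\epsilon\in U_{n-1}$ with $v_\epsilon\to v_0$ together with a nearby configuration $\langle\Lambda_\epsilon\rangle_{PC}\in PC_{a,v_\epsilon}$ which ``degenerates'' to $\langle\Lambda\rangle_{PC}$ as $\epsilon\to 0$. Degeneracies in $\langle\Lambda\rangle_{PC}$ come in a few combinatorial types: a single critical level curve $\lambda$ with several vertices all contributing critical values of the common modulus $H(\langle\lambda\rangle_P)$; two distinct critical level curves which happen to share a modulus (one nested inside the other); and a critical vertex lying in the same face as a level-$0$ member $\langle z\rangle_{PC}$ when $v_0^{(1)}=0$. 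In each case I would define $\langle\Lambda_\epsilon\rangle_{PC}$ by a local combinatorial surgery: split a multi-vertex critical graph into several nested single-vertex graphs joined by narrow annular necks, nudge coincident moduli apart, or separate a critical vertex from a coalesced zero, making all the auxiliary data (face labels $z(D)$, distinguished points, arguments $a(x)$, gradient maps) depend continuously on $\epsilon$.

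By Lemma~\ref{lemma: Partial surjectivity.} each $\langle\Lambda_\epsilon\rangle_{PC}$ is realized by a degree $n$ polynomial $p_\epsilon$, which I would normalize (say monic with a fixed sum of roots, canonical in view of~\cite{BCN}). Because the $p_\epsilon$ share the same degree and have critical values contained in a fixed compact subset of $\mathbb{D}$, their coefficients remain bounded, so a subsequence converges locally uniformly to a degree $n$ polynomial $p$. Hurwitz's theorem applied to $p_\epsilon'$ forces the critical points of $p$ to be the limits of those of $p_\epsilon$ with correct total multiplicity, and the critical values of $p$ to be exactly the entries of $v_0$ listed with multiplicity, so $(p,G_p)\in H_{p,v_0}$.

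The main obstacle, and the final step, is to verify that $\Pi(p,G_p)=\langle\Lambda\rangle_{PC}$. For this I would track the Hausdorff convergence of the level sets $\{|p_\epsilon|=c\}$ to those of $p$ uniformly in $c$, using Theorem~\ref{f Conformally Equiv. to a Power of z.} on each component of $G_{p_\epsilon}\setminus\Lambda_{\mathcal{B}_{p_\epsilon}}$ to control its conformal type as an annulus (the integer $N$ and the moduli $\epsilon_1,\epsilon_2$ vary continuously) so that no topology is lost in the limit. The combinatorial surgery of paragraph two has been chosen precisely as the inverse of the analytic coalescence, so that the graphs, $z$-labels, distinguished points, and gradient maps of $\langle\Lambda_\epsilon\rangle_{PC}$ collapse to exactly the data of $\langle\Lambda\rangle_{PC}$. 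The delicate point is ensuring no extra critical level curve of $p$ is spuriously created or lost in passing to the limit; this is where the fact that $p$ has at most $n-1$ critical points counted with multiplicity enters, forcing a tight bookkeeping match between the combinatorial and analytic pictures and pinning down $\Pi(p,G_p)$ uniquely as $\langle\Lambda\rangle_{PC}$.
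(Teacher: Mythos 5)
Your overall strategy (approximate the degenerate configuration by generic ones, realize those by Lemma~\ref{lemma: Partial surjectivity.}, and pass back to the degenerate critical values) is the same idea that drives the paper, but as written the decisive step is missing. The gap is the final claim that $\Pi(p,G_p)=\langle\Lambda\rangle_{PC}$. Locally uniform convergence $p_\epsilon\to p$ gives Hausdorff convergence of the sets $\{|p_\epsilon|=c\}$ and, via Hurwitz, the correct critical points and critical values in the limit, but none of this determines the combinatorial data that a member of $PC_a$ actually records: which critical points end up on the same critical level curve of $p$, which configurations sit in which bounded faces, where the distinguished points (arguments $0$ of $p$) lie, and above all the gradient maps $g_D$, i.e.\ which distinguished points are joined by gradient lines. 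Gradient-line connections and incidence of vertices with level curves can change discontinuously exactly at the degenerate parameters you are approaching, so the assertion that ``the combinatorial surgery has been chosen precisely as the inverse of the analytic coalescence'' is the whole theorem restated, not an argument. There is also no link between your surgery parameter $\epsilon$ and the polynomials $p_\epsilon$: Lemma~\ref{lemma: Partial surjectivity.} produces $p_\epsilon$ only through a counting argument, so a subsequential limit $p$ could a priori realize a \emph{different} member of $PC_{a,v_0}$ (the fiber $PC_{a,v_0}$ typically has many elements), and counting critical points of $p$ with multiplicity, as you propose, cannot distinguish between them. This is precisely the obstruction the paper flags in Subsection~\ref{subsect: Continuity proof difficulties.}: without a tractable topology on $PC_a$ and a proof that $\Pi$ is continuous, the ``limit'' step cannot be waved through.

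For comparison, the paper avoids taking a limit altogether. It inducts on the ``atypicality degree'' of $v$, perturbs the \emph{configuration} once (by an explicit scattering/splitting construction) to lower that degree, realizes the perturbed configuration by some $p_{\widehat{u}}$ via the induction hypothesis, and then uses properness of $\Theta$ (Lemma~\ref{lemma:If hat{v} is close to v, then any hat{u} is close to some u.}) to find $u$ with $\Theta(u)=v$ exactly and $u$ close to $\widehat{u}$. The identification $\Pi(p_u,G_{p_u})=\langle\Lambda\rangle_{PC}$ is then proved by quantitative stability statements for individual level-curve edges and gradient-line segments under small changes of $u$ (Lemmas~\ref{lemma:If p_{widehat{u}} has a level curve edge, then p_u has a level curve edge.} and~\ref{lemma:If p_{widehat{u}} has a gradient line, then p_u has a gradient line.}), together with a careful choice of constants and an explicit matching of vertices, edges, faces, distinguished points, and gradient maps. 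Some argument of this quantitative kind is exactly what your proposal would need in its last paragraph; without it the proof is incomplete.
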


We do this by extending the result of Lemma~\ref{lemma: Partial surjectivity.} to all $v\in V$.  That is, we show that $\Pi:H_{a,v}\to PC_{a,v}$ is surjective for each $v\in V$.  In order to make the ideas flow more smoothly, several of the technical arguments have been made in the form of lemmas and included in the appendix.  First several definitions.

\begin{definition}
For $v\in{V_{n-1}}$, we say that $v$ is typical if $v\in{U_{n-1}}$, in which case we say $v$ has atypicallity degree $0$ (so $0<|v^{(1)}|<\cdots<|v^{(n-1)}|$).  We say that $v$ has atypicallity degree $1$ if $0=|v^{(1)}|<|v^{(2)}|<|v^{(3)}|<\cdots<|v^{(n-1)}|$.  Finally, we say that $v$ has atypicallity degree $k$ for $2\leq k\leq{n-1}$ if $0\leq|v^{(1)}|\leq\cdots\leq|v^{(k-1)}|=|v^{(k)}|<|v^{(k+1)}|<\cdots<|v^{(n-1)}|$.
\end{definition}

\begin{definition}
Define a map $\Theta:\mathbb{C}^{n-1}\to\mathbb{C}^{n-1}$ by, for $u=(u^{(1)},\ldots,u^{(n-1)})\in\mathbb{C}^{n-1}$, $\Theta(u)=(p_u(u^{(1)}),\ldots,p_u(u^{(n-1)}))$.
\end{definition}

This map $\Theta$ was studied in~\cite{BCN}, where Beardon, Carne, and Ng established several of its properties, and used these properties to prove among other things the counting result which we have already made use of, that $|H_{p,v}|$ equals $1$ for $n=2$ and $n^{n-3}$ for $n>2$.  $\Theta$ may be thought of as taking a list of prescribed critical points $u$ to a corresponding list of critical values via the normalized polynomial $p_u$.

\begin{definition}
For any $i\geq0$, let $\mathcal{J}(i)$ denote the statement "For any $\langle\lambda\rangle_{PC}\in{PC_a}$ whose vector of critical values $v\in{V_{n-1}}$ has atypicallity degree less than or equal to $i$, there is a $u\in\Theta^{-1}(v)$ such that $(p_u,G_{p_u})\in{H_p}$ and $\Pi(p_u,G_{p_u})=\langle\lambda\rangle_{PC}$."
\end{definition}

Lemma~\ref{lemma: Partial surjectivity.} may now be restated as saying that $\mathcal{J}(0)$ holds.  We will show by induction on $i$ that $\mathcal{J}(i)$ holds for all $i\geq0$.

The idea behind the proof is as follows.  Fix some $M\geq1$ and assume inductively that $\mathcal{J}(i)$ holds for $i<M$.  Fix some $N-1\geq M$, and some $v\in V_{N-1}$ with atypicallity degree $M$.  Fix some $\langle\Lambda\rangle_{PC}\in PC_{a,v}$.  We will alter $\langle\Lambda\rangle_{PC}$ slightly to form a new member $\langle\widehat{\Lambda}\rangle_{PC}$ of $PC$ whose vector of critical values $\widehat{v}$ is very close to $v$, and has atypicallity degree strictly less than $M$.  By the induction assumption there is some $\widehat{u}\in V$ such that $\Pi(p_{\widehat{u}},G_{p_{\widehat{u}}})=\langle\widehat{\Lambda}\rangle_{PC}$.

By constructing $\langle\widehat{\Lambda}\rangle_{PC}$ in such a way as to make $\widehat{v}$ close to $v$, we may find a $u\in V$ such that $(p_u,G_{p_u})$ has critical values $v$, and moreover by making $\widehat{v}$ sufficiently close to $v$, we will deduce from the properties of the map $\Theta$ that we can force $u$ to be arbitrarily close to $\widehat{u}$.

Since the function $p_u$ depends continuously on $u$, by making $u$ sufficiently close to $\widehat{u}$ we can ensure that $|p_u-p_{\widehat{u}}|$ is arbitrarily small uniformly on $G_{p_u}$.  But the level curves of $p_u$ in turn depend continuously on $u$, so by ensuring that $u$ is sufficiently close to $\widehat{u}$, it will follow that the critical level curves of $p_u$ are very close to the critical level curves of $p_{\widehat{u}}$.

Finally, if the critical level curves of $p_u$ are sufficiently close to the critical level curves of $p_{\widehat{u}}$, we will show that the critical level curves of $p_u$ form exactly the graphs which are found in $\langle\Lambda\rangle_{PC}$, and thus we will conclude that $\Pi(p_u,G_{p_u})=\langle\Lambda\rangle_{PC}$.

To make rigorous the successive "sufficiently close's" above, we will need to choose several different constants, each of which depends on the earlier ones, and each of which must fulfill several requirements.  The proof is divided into sub-sections to keep clear the immediate goals on which we are working.

\begin{proof}[Proof of Theorem~\ref{thm: Pi:H_a to PC_a is bijective.}]

We fix some $M\geq1$ and assume inductively that $\mathcal{J}(i)$ holds for $i<M$.  We fix some $N-1\geq M$, and some $v_1\in V_{N-1}$ with atypicallity degree $M$.  We fix some $\langle\Lambda\rangle_{PC}\in PC_{a,v_1}$.

We will need several definitions to determine how small $|v_1-\widehat{v_1}|$ must be to ensure that we can proceed with the steps outlined above.

\begin{definition}
For $x^{(1)},x^{(2)}\in\mathbb{C}$, define 

\[
	d_{\arg}(x^{(1)},x^{(2)})\colonequals
	\begin{cases}
	|\arg(x^{(1)})-\arg(x^{(2)})|,&\text{ if }x^{(1)}\neq0\neq{x^{(2)}}\text{ and }\arg(x^{(1)})\neq\arg(x^{(2)})\\
	2\pi,&\text{ if }x^{(1)}=0\text{ or }x^{(2)}=0\text{ or }\arg(x^{(1)})=\arg(x^{(2)})	
	\end{cases},
\]

where the choice of $\arg(x^{(1)})$ and $\arg(x^{(2)})$ in the definition above is made so as to minimize $d_{\arg}(x^{(1)},x^{(2)})$.  For $x=(x^{(1)},\ldots,x^{(m)})\in\mathbb{C}^m$ with $m\geq2$, define

\[
d_{\arg}(x)\colonequals\min(d_{\arg}(x^{(i)},x^{(j)}):1\leq{i,j}\leq{m}).
\]
\end{definition}

\begin{definition}
For $a,b\in\mathbb{R}$ with $a<b$, and for $I:a=i_0<i_1<\cdots<i_n=b$ a partition of $[a,b]$, define $|I|\colonequals\max(i_k-i_{k-1}:1\leq{k}\leq{n})$.  Let $\gamma:[a,b]\to\mathbb{C}$ be a path and let $f$ be a function analytic and non-zero on the image of $\gamma$.  Define $\Delta_{\arg}(f,\gamma,I)\colonequals\displaystyle\sum_{k=1}^n\arg(f(\gamma(i_k)))-\arg(f(\gamma(i_{k-1})))$, where the choice of the arguments in each summand is made so as to minimize the magnitude of the summand.  Since $f$ is non-zero on $\gamma$, it is not hard to show that the limit as $|I|\to0$ of $\Delta_{\arg}(f,\gamma,I)$ exists (and is finite).  Let $\Delta_{\arg}(f,\gamma)$ denote this limit.  We call $\Delta_{\arg}(f,\gamma)$ the change of $\arg(f)$ along $\gamma$.  Define $|\Delta_{\arg}|(f,\gamma,I)\colonequals\displaystyle\sum_{k=1}^n|\arg(f(\gamma_k))-\arg(f(\gamma(i_{k-1})))|$, where again the choice of arguments is made so as to minimize the summands.  Again it is not hard to show that the limit as $|I|\to0$ of $|\Delta_{\arg}|(f,\gamma,I)$ exists (although possibly infinite).  We let $|\Delta_{\arg}|(f,\gamma)$ denote this limit, and we call $|\Delta_{\arg}|(f,\gamma)$ the total variation of $\arg(f)$ along $\gamma$.
\end{definition}

\begin{definition}
For $x=(x^{(1)},\ldots,x^{(m)})\in\mathbb{C}^m$, define

\[
	\text{minmod}(x)\colonequals
	\begin{cases}
	0,&\text{ if }x^{(1)}=\cdots=x^{(m)}=0\\
	\min(|x^{(i)}|:1\leq{i}\leq{m},x^{(i)}\neq0),&\text{otherwise}	
	\end{cases}.
\]

\end{definition}

\begin{definition}
For any $m\geq2$ and $x=(x^{(1)},\ldots,x^{(m)})\in\mathbb{C}^m$, define

\[
	\text{mindiff}(x)\colonequals
	\begin{cases}
	0,&\text{ if }x^{(1)}=\cdots=x^{(m)}\\
	\min(|x^{(i)}-x^{(j)}|:x^{(i)}\neq{x^{(j)}},1\leq{i},j\leq{m}),&\text{otherwise}	
	\end{cases}.
\]

\end{definition}

\begin{definition}
Let $\gamma:[\alpha,\beta]\to\mathbb{C}$ be a path, and let $f$ be a function analytic and non-zero on $\gamma$.  We say that $\gamma$ is parameterized according to $\arg(f)$ if for each $t\in[\alpha,\beta]$, $\arg(f(\gamma(t)))=t$.
\end{definition}

We will now determine how close $\widehat{v_1}$ must be to $v_1$.  We will choose several constants along the way, culminating in a choice of $\nu_1>0$ which will be how close we need $\widehat{v_1}$ to be to $v_1$, and which will govern our construction of $\langle\widehat{\Lambda}\rangle_{PC}$.

\subsection{CHOICE OF CONSTANTS: HOW CLOSE $\widehat{v_1}$ MUST BE TO $v_1$}

\textbf{Note to the Reader}: The purpose of the items in the choice of the constants to follow will likely be completely opaque at this time.  Therefore it may be advisible to skip this section and refer back to it as needed throughout the proof.

We begin by choosing a $\delta_1>0$ small enough that the following hold.

\begin{enumerate}
\item\label{const: delta_1 item 1.}
Let $u=(u^{(1)},\ldots,u^{(N-1)})$ be any point in $\Theta^{-1}(v_1)$.  Fix some $i\in\{1,\ldots,N-1\}$ such that ${v_1}^{(i)}\neq0$, and choose any $\widehat{u}\in{B_{\delta_1}}(u)$.  Let $L$ be a line segment contained in $B_{4\delta_1}(u^{(i)})$.  Then $|\Delta_{\arg}|(p_{\widehat{u}},L)<\frac{d_{\arg}(v_1)}{4}$.  (This may be done by the finiteness of $\Theta^{-1}(v_1)$~\cite{BCN} and the compactness of both $\text{cl}(B_{\delta_1}(u))$ and $\text{cl}(B_{4\delta_1}(u^{(i)}))$ for each $u\in\Theta^{-1}(v_1)$ and for each $i\in\{1,\ldots,N-1\}$.)

\item\label{const: delta_1 item 3.}
For any $u\in\Theta^{-1}(v_1)$, let $D$ denote either all of $G_{p_u}$, or a bounded face of one of the critical level curves $\lambda$ of $p_u$ such that $D$ contains a critical point of $p_u$ whose corresponding critical value is non-zero.  Let $\lambda_D$ be the critical level curve of $p_u$ in $D$ which is maximal with respect to $D$ (ie such that each critical point of $p_u$ in $D$ is either in $\lambda_D$ or in one of the bounded faces of $\lambda_D$).  Let $m$ denote the number of distinct edges in $\lambda_D$.  Let $E^{(1)},\ldots,E^{(m)}$ be some enumeration of the edges of $\lambda_D$.  For each $i\in\{1,\ldots,m\}$, choose some point $z^{(i)}$ in $E^{(i)}$ such that $\arg(p_u(z^{(i)}))$ is greater than $\frac{d_{\arg}(v_1)}{4}$ away from each of $\{\arg({v_1}^{(1)}),\ldots,\arg({v_1}^{(N-1)})\}$.  Let $y^{(i)}$ be the point in $\partial{D}$ which is connected to $z^{(i)}$ by a section of a gradient line of $p_u$, and let $\sigma^{(i)}:[0,1]\to\mathbb{C}$ parameterize this section of gradient line which connects $z^{(i)}$ and $y^{(i)}$.  Since $\sigma^{(i)}$ is a portion of a gradient line of $p_u$, $\arg(p_u(y^{(i)}))=\arg(p_u(z^{(i)}))$, so $y^{(i)}$ is not a critical point of $p_u$.  Since there are only finitely many such choices of $u$, $\lambda$, and $D$, we may construct such a collection of paths for each such choice of $u,\lambda,D$, and choose $\delta_1$ so that for each such $u$, $\lambda$, and $D$, if $i\in\{1,\ldots,m\}$ (here $m$ depends on the choice of $u$, $\lambda$, and $D$) and $t\in[0,1]$, there is no $j\in\{1,\ldots,m\}\setminus\{i\}$ and $s\in[0,1]$ such that $\sigma^{(j)}(s)$ is within $2\delta_1$ of $\sigma^{(i)}(t)$, and no critical point of $p_u$ is within $2\delta_1$ of $\sigma^{(i)}(t)$, and there is no edge of any critical level curve of $p_u$ other than the ones containing $\sigma^{(i)}(0)$ and $\sigma^{(i)}(1)$ within $2\delta_1$ of $\sigma^{(i)}(t)$.

\item\label{const: delta_1 item 4.}
For each $u\in\Theta^{-1}(v_1)$ , no critical level curve of $p_u$ is within $2\delta_1$ of $\partial{G_{p_u}}$ and no critical level curve of $p_u$ is within $3\delta_1$ of any other critical level curve of $p_u$.  (This may be done by the finiteness of $\Theta^{-1}(v_1)$.)

\item\label{const: delta_1 item 5.}
For each $u\in\Theta^{-1}(v_1)$, and each $k\in\{1,\ldots,N-1\}$, there is no point in the punctured ball $B_{2\delta^{(1)}}(u^{(k)})\setminus\{u^{(k)}\}$ at which $p_u$ takes the value ${v_1}^{(k)}$.  (This may be done by the finiteness of $\Theta^{-1}(v_1)$.)

\item\label{const: delta_1 item 6.}
For each $u\in\Theta^{-1}(v_1)$, if $|\widehat{u}-u|<\delta_1$, then for each $k\in\{1,\ldots,N-1\}$ such that ${v_1}^{(k)}\neq0$, if $|z-u^{(k)}|<2\delta_1$, then $|p_{\widehat{u}}(z)|>\frac{\text{minmod}(v_1)}{2}$.  (This follows from the fact that $p_u$ depends continuously on $u$.)

\item\label{const: delta_1 item 7.}
$\delta_1<\frac{\text{minmod}(v_1)d_{\arg}(v_1)}{8\pi^2}$.

\item\label{const: delta_1 item 8.}
$\delta_1<\frac{\text{mindiff}(v_1)}{4}$.

\item\label{const: delta_1 item 9.}
For each $u\in\Theta^{-1}(v_1)$, if $x_1,x_2\in{G_{p_u}}$ are both in critical level curves of $p_u$, and $\arg(p_u(x_1))=\arg(p_u(x_2))=0$, then either $x_1=x_2$ or $|x_1-x_2|>2\delta_1$.  (This may be done since there are only finitely many $u\in\Theta^{-1}(v_1)$ and only finitely many such $x_1$ and $x_2$ for each such $u$.)
\end{enumerate}

We now choose $\delta_2\in(0,\delta_1)$ small enough so that each of the following holds.

\begin{enumerate}

\item\label{const: delta_2 item 2.}
For each $u\in\Theta^{-1}(v_1)$, for each $k\in\{1,\ldots,N-1\}$, for each $z\in{B_{3\delta_2}}(u^{(k)})$, we have $|p_u(u^{(k)})-p_u(z)|<\delta_1$.  (This may be done because $\Theta^{-1}(v_1)$ is finite and each $p_u$ is continuous.)

\item\label{const: delta_2 item 3.}
By Lemma~\ref{lemma:If p_{widehat{u}} has a gradient line, then p_u has a gradient line.}, we may choose $\delta_2>0$ and $\rho_1>0$ so that the following holds.  Let $u\in\Theta^{-1}(v_1)$ be given.  Let $\widehat{u}$ be any point in $B_{\rho_1}(u)$ and let $\widehat{x_1},\widehat{x_2}\in{G_{p_{\widehat{u}}}}$ be given such that $\arg(p_{\widehat{u}}(\widehat{x_1}))=\arg(p_{\widehat{u}}(\widehat{x_2}))=0$, and such that there is a path $\widehat{\sigma}:[0,1]\to G_{p_{\widehat{u}}}$ such that $\widehat{\sigma}(0)=\widehat{x_1}$ and $\widehat{\sigma}(1)=\widehat{x_2}$ and $\arg(p_{\widehat{u}}(\widehat{\sigma}(r)))=0$ for all $r\in[0,1]$.  Then if $x_1,x_2\in{G_{p_{\widehat{u}}}}$ are such that $\arg(p_u(x_1))=\arg(p_u(x_2))=0$ and $|\widehat{x_1}-x_1|<\delta_2$ and $|\widehat{x_2}-x_2|<\delta_2$, then there is a path $\sigma:[0,1]\to{G_{p_u}}$ such that $\sigma(0)=x_1$, $\sigma(1)=x_2$, and for all $r\in[0,1]$, $\arg(p_u(\sigma(r)))=0$ and $|\widehat{\sigma}(r)-\sigma(r)|<\delta_1$.  Moreover, if $|p_{\widehat{u}}|$ is strictly increasing or strictly decreasing along $\widehat{\sigma}$, then $\sigma$ may be chosen so that $|p_u|$ is strictly increasing or strictly decreasing along $\sigma$ respectively.

\item\label{const: delta_2 item 4.}
$\delta_2<\dfrac{\text{mindiff}(0,|{v_1}^{(1)}|,\ldots,|{v_1}^{(N-1)}|)}{100}$.

\end{enumerate}

In Item~\ref{const: delta_2 item 3.} above we chose a $\rho_1\in(0,\delta_2)$.  We now require that $\rho_1>0$ be chosen smaller if necessary so that the following holds.

\begin{enumerate}

\item\label{const: rho_1 item 2.}
We will use this item to refer to the restriction on $\rho_1$ described in Item~\ref{const: delta_2 item 3.} for the choice of $\delta_2$ above.

\item\label{const: rho_1 item 3.}
Let $u=(u^{(1)},\ldots,u^{(N-1)})\in\Theta^{-1}(v_1)$ be chosen.  For $\widehat{u}\in{B_{\rho_1}}(u)$ define $\widehat{v}=(\widehat{v^{(1)}},\ldots,\widehat{v^{(n-1)}})\colonequals\Theta(\widehat{u})$.  Suppose that $\arg(\widehat{v^{(k)}})=\arg(v^{(k)})$ for each $k\in\{1,\ldots,N-1\}$.  For some $k\in\{1,\ldots,N-1\}$ with $|v^{(k)}|\neq0$, let $\widehat{\lambda}$ denote the level curve of $p_{\widehat{u}}$ which contains $\widehat{u^{(k)}}$.  Then the following holds.  Let $\widehat{E}$ denote some edge of $\widehat{\lambda}$ which is incident to $\widehat{u^{(k)}}$.  Let $\alpha$ denote a choice of $\arg(p_{\widehat{u}}(u^{(k)}))$, and let $\Delta$ denote the total change in $\arg(p_{\widehat{u}})$ along $\widehat{E}$ beginning at $\widehat{u^{(k)}}$.  Let $\widehat{\gamma}:[\alpha,\alpha+\Delta]\to\mathbb{C}$ be a parameterization of $\widehat{E}$ with respect to $\arg(p_{\widehat{u}})$ beginning at $\widehat{u^{(k)}}$.  Then if we let $\lambda$ denote the critical level curve of $p_u$ containing $u^{(k)}$, there is a path $\gamma:[\alpha,\alpha+\Delta]\to\lambda$ such that $\gamma(\alpha)=u^{(k)}$, and for each $r\in[\alpha,\alpha+\Delta]$, $\arg(p_u(\gamma(r)))=r$ and $|\gamma(r)-\widehat{\gamma}(r)|<\delta_2$.  (This may be done by Lemma~\ref{lemma:If p_{widehat{u}} has a level curve edge, then p_u has a level curve edge.}.)


\item\label{const: rho_1 item 5.}
For each $u\in\Theta^{-1}(v_1)$, if $|\widehat{u}-u|<\rho_1$, and $z\in{G_{p_{\widehat{u}}}}$, and $|z-z'|<3\rho_1$, then $|p_{\widehat{u}}(z')-p_u(z)|<\dfrac{\delta_2}{2}$.  (This may be done because $\Theta^{-1}(v_1)$ is finite and because $p_{\widehat{u}}$ depends continuously on $\widehat{u}$.)

\item\label{const: rho_1 item 6.}
For each $u=(u^{(1)},\ldots,u^{(N-1)})\in\Theta^{-1}(v_1)$, $\rho_1<\dfrac{\text{mindiff}(u^{(1)},\ldots,u^{(N-1)})}{2}$.  (This may be done by the finiteness of $\Theta^{-1}(v_1)$.)

\end{enumerate}

Finally, choose some $\nu_1\in(0,\rho_1)$ small enough so that the following holds.

\begin{enumerate}

\item\label{const: nu_1 item 2.}
If $\widehat{v_1}\in{V_{N-1}}$ satisfies $|v_1-\widehat{v_1}|<\nu_1$, and $\widehat{u}\in\Theta^{-1}(\widehat{v_1})$, then there is some $u\in\Theta^{-1}(v_1)$ such that $|u-\widehat{u}|<\frac{\rho_1}{4}$.  (This may be done by Lemma~\ref{lemma:If hat{v} is close to v, then any hat{u} is close to some u.}.)

\item\label{const: nu_1 item 4.}
$\nu_1<1-|{v_1}^{(N-1)}|$.
\end{enumerate}

This $\nu_1$ just found will be how close $\widehat{v_1}$ must be to $v_1$ to make the argument described above.  We now proceed to construct a critical level curve configuration $\langle\widehat{\Lambda}\rangle_{PC}\in{PC}$ with critical values $\widehat{v_1}\in{V_{N-1}}$ satisfying $|v_1-\widehat{v_1}|<\nu_1$, and such that $\widehat{v_1}$ has atypicallity degree strictly less than $M$.

\subsection{CONSTRUCTION OF $\langle\widehat{\Lambda}\rangle_{PC}$}\label{subsect: Construction of widehat{Lambda}.}

We begin with some notation.

\begin{definition}
For $\langle\xi\rangle_{PC}\in{PC}$ and $\epsilon>0$, we define $E_{\langle\xi\rangle_{PC},\epsilon}$ to be the collection of members $\langle\psi\rangle_P\in{P}$ used to construct $\langle\xi\rangle_{PC}$ such that $H(\langle\psi\rangle_P)=\epsilon$.
\end{definition}

Recall that $v_1$ has atypicallity degree $M$, so

\[|{v_1}^{(1)}|\leq\cdots\leq|{v_1}^{(M-1)}|=|{v_1}^{(M)}|<\cdots<|{v_1}^{(N-1)}|.
\]

We will construct $\langle\widehat{\Lambda}\rangle_{PC}\in{PC}$ differently depending on into which of the following three cases $v_1$ falls.

\begin{itemize}
\item
$|{v_1}^{(M)}|=0$.

\item
$|{v_1}^{(M)}|>0$ and for each $\langle\lambda\rangle_P\in{E_{\langle\Lambda\rangle_{PC},|{v_1}^{(M)}|}}$, $\langle\lambda\rangle_P$ only contains a single vertex (counting multiplicity).

\item
$|{v_1}^{(M)}|>0$ and there is some member of $E_{\langle\Lambda\rangle_{PC},|{v_1}^{(M)}|}$ which contains more than one vertex (counting multiplicity).
\end{itemize}

\begin{case}\label{case:|{v_1}^{(M)}|=0.}
$|{v_1}^{(M)}|=0$.
\end{case}

Recall that all single point members of $P$ are identical except for the value that $Z(\cdot)$ takes.  Therefore we make the following definition.

\begin{definition}
For each non-zero integer $k$, let $\langle{w_k}\rangle_P$ denote the single point member of $P$ such that $Z(\langle{w_k}\rangle_P)=k$.
\end{definition}

Since $0$ is a critical value of $\langle\Lambda\rangle_{PC}$, there is some level $0$ member $\langle{w_k}\rangle_{PC}\in{PC}$ used in the construction of $\langle\Lambda\rangle_{PC}$ such that $k\geq2$.  That is, in the construction of $\langle\Lambda\rangle_{PC}$, $\langle{w_k}\rangle_{PC}$ was associated to a face of some member of $P$.  Let $\langle\psi\rangle_P$ denote this member of $P$, and let $D$ denote the face of $\psi$ to which $\langle{w_k}\rangle_{PC}$ was associated.  We will define $\langle\widehat{\lambda}\rangle_{PC}$, another member of $PC$, to replace $\langle{w_k}\rangle_{PC}$ as we construct $\langle\widehat{\Lambda}\rangle_{PC}$, and in every other respect we will construct $\langle\widehat{\Lambda}\rangle_{PC}$ in the same manner as $\langle\Lambda\rangle_{PC}$.

Let $\widehat{\lambda}$ denote the "figure eight" graph.  Let $x$ denote the vertex of $\widehat{\lambda}$.  Define $H(\langle\widehat{\lambda}\rangle_P)\colonequals\frac{\nu_1}{2}$, and $a(x)\colonequals0$.  Let $\widehat{D^{(1)}}$ denote one of the bounded faces of $\widehat{\lambda}$, and $\widehat{D^{(2)}}$ the other.  Distinguish $x$ and distinguish $k-2$ distinct points other than $x$ in the boundary of $\widehat{D^{(1)}}$.

With this auxiliary data we have formed a member of $P$, namely $\langle\widehat{\lambda}\rangle_P$.  To $\widehat{D^{(1)}}$ we associate $\langle{w_{k-1}}\rangle_{PC}$, and define $g_{\widehat{D^{(1)}}}$ by mapping each distinguished point in $\partial\widehat{D^{(1)}}$ to $w_{k-1}$.  We associate $\langle{w_1}\rangle_{PC}$ to $\widehat{D^{(2)}}$, and define $g_{\widehat{D^{(2)}}}$ to map the single distinguished point in $\partial\widehat{D^{(2)}}$ (namely $x$) to $w_1$.  The resulting object is a member of $PC$, namely $\langle\widehat{\lambda}\rangle_{PC}$.

We wish to construct $\langle\widehat{\Lambda}\rangle_{PC}$ in exactly the same manner as $\langle\Lambda\rangle_{PC}$, except by replacing $\langle{w_k}\rangle_{PC}$ with $\langle\widehat{\lambda}\rangle_{PC}$.  Note that since $\nu_1<\text{minmod}(v_1)$ (by Item~\ref{const: delta_1 item 7.} in the choice of $\delta_1$), and $Z(\langle\widehat{\lambda}\rangle_P)=k=Z(\langle{w_k}\rangle_P)$ by the construction of $\langle\widehat{\lambda}\rangle_{PC}$, this replacement does not violate the rules for construction of members of $PC_a$.  The only thing remaining to do in the construction of $\langle\widehat{\Lambda}\rangle_{PC}$ is specify $g_D$.  Let $w^{(1)}$ be any fixed distinguished point in $\partial{D}$.  Then define $g_D(w^{(1)})\colonequals{x}$, and if $w$ is the $i^{th}$ distinguished point in $\partial{D}$ (for some $i\in\{1,\ldots,k-1\}$) in the positive direction after $w^{(1)}$, define $g_{D}(w)$ to be the $i^{th}$ distinguished point in $\partial\widehat{D^{(1)}}$ in the positive direction after $x$ (where the $(k-1)^{\text{st}}$ distinguished point in $\partial\widehat{D^{(1)}}$ after $x$ is interpreted as being $x$ itself).  Then proceeding with the construction in every other way the same as with $\langle\Lambda\rangle_{PC}$, we obtain a member of $PC$, namely $\langle\widehat{\Lambda}\rangle_{PC}$.

Note that the critical values of $\langle\widehat{\Lambda}\rangle_{PC}$ will be exactly

\[
\widehat{v_1}\colonequals(0,\ldots,0,\frac{\nu_1}{2},{v_1}^{(M+1)},\ldots,{v_1}^{(N-1)}),
\]

(with $M-1$ copies of $0$), while

\[
v_1=(0,\ldots,0,{v_1}^{(M+1)},\ldots,{v_1}^{(N-1)}),
\]

so $|v_1-\widehat{v_1}|=\frac{\nu_1}{2}<\nu_1$.  Note also that since $\frac{\nu_1}{2}<\text{minmod}(v_1)<|{v_1}^{(M+1)}|$, $\widehat{v_1}$ has atypicallity degree $M-1<M$.

\begin{case}\label{case:v_1^M>0 single zero.}
$|{v_1}^{(M)}|>0$ and for some $\langle\lambda\rangle_P\in{E_{\langle\Lambda\rangle_{PC},|{v_1}^{(M)}|}}$, $\lambda$ only contains a single vertex (counting multiplicity).
\end{case}

Let $\langle\lambda\rangle_P$ be some fixed member of $E_{\langle\Lambda\rangle_{PC},|{v_1}^{(M)}|}$ such that $\lambda$ contains only a single vertex counting multiplicity.  Let $\langle\widehat{\lambda}\rangle_{PC}$ be identical to $\langle\lambda\rangle_{PC}$, except that we define $H(\langle\widehat{\lambda}\rangle_P)\colonequals(1+\frac{\nu_1}{2})H(\langle\lambda\rangle_P)=(1+\frac{\nu_1}{2})|{v_1}^{(M)}|$.  Let $\langle\psi\rangle_P$ denote the member of $P$ such that $\langle\lambda\rangle_{PC}$ is assigned to some face $D$ of $\psi$ during the construction of $\langle\Lambda\rangle_{PC}$.  By Item~\ref{const: delta_2 item 4.} in the choice of $\delta_2$, $H(\langle\widehat{\lambda}\rangle_P)<H(\langle\psi\rangle_P)$ (since $\nu_1<\delta_2$), so we may replace $\langle\lambda\rangle_{PC}$ with $\langle\widehat{\lambda}\rangle_{PC}$ in the construction of $\langle\widehat{\Lambda}\rangle_{PC}$ without violating the rules of construction for members of $PC_a$.  In every other respect we construct $\langle\widehat{\Lambda}\rangle_{PC}$ in a manner identical to the construction of $\langle\Lambda\rangle_{PC}$.

Note that with this construction, the critical values of $\langle\widehat{\Lambda}\rangle_{PC}$ are exactly $\widehat{v_1}\colonequals({v_1}^{(1)},\ldots,{v_1}^{(M-1)},(1+\frac{\nu_1}{2}){v_1}^{(M)},{v_1}^{(M+1)},\ldots,{v_1}^{(N-1)})$, so $|v_1-\widehat{v_1}|=|\frac{\nu_1}{2}{v_1}^{(M)}|<\nu_1$.

The fact that, by Item~\ref{const: delta_2 item 4.} in the choice of $\delta_2$, $\nu_1<\text{mindiff}(0,|{v_1}^{(1)}|,\ldots,|{v_1}^{(N-1)}|)$ implies that $\widehat{v_1}$ has atypicallity degree strictly less than $M$.

\begin{case}\label{case:v_1^M>0 multiple zero.}
$|{v_1}^{(M)}|>0$ and each member of $E_{\langle\Lambda\rangle_{PC},|{v_1}^{(M)}|}$ contains more than one vertex (counting multiplicity).
\end{case}

Let $\langle\lambda\rangle_{PC}$ denote one of the members of $PC$ used in constructing $\langle\Lambda\rangle_{PC}$ such that $H(\langle\lambda\rangle_P)=|{v_1}^{(M)}|$.  (Possibly $\langle\lambda\rangle_{PC}=\langle\Lambda\rangle_{PC}$.)  By Lemma~\ref{lemma:Analytic graphs have face with one edge boundary.}, we may find some bounded face $F^{(1)}$ of $\lambda$ such that the boundary of $F^{(1)}$ consists of a single edge $E^{(1)}$ of $\lambda$.  Let $z$ denote the vertex at which $E^{(1)}$ has its endpoints.  We make the following definitions.

\begin{itemize}
\item
We define $\lambda\setminus{E^{(1)}}$ to be the analytic level curve type graph which arises from $\lambda$ when the edge $E^{(1)}$ is removed.  If $z$ has multiplicity $1$ as a vertex of $\lambda$, then in $\lambda\setminus E^{(1)}$ we just join the edges whose endpoints are at $z$, and no longer view $z$ as a vertex in $\lambda\setminus E^{(1)}$.

\item
We define $\langle\lambda\setminus{E^{(1)}}\rangle_P$ to be the member of $P$ which arises from $\lambda\setminus{E^{(1)}}$, and inherits all of its auxiliary data from $\langle\lambda\rangle_P$.

\item
We define $\langle\lambda\setminus{E^{(1)}}\rangle_{PC}$ to be the member of $PC_a$ which arises from $\lambda\setminus{E^{(1)}}$, and inherits all of its auxiliary data from $\langle\lambda\rangle_{PC}$.  For example, if $D$ is a bounded face of $\lambda$ other than $F^{(1)}$, and $\langle\xi_D\rangle_{PC}$ is the member of $PC_a$ associated to $D$ in the construction of $\langle\lambda\rangle_{PC}$, then we associate $\langle\xi_D\rangle_{PC}$ to $D$ in the construction of $\langle\lambda\setminus{E^{(1)}}\rangle_{PC}$, and we carry over $g_D$ to $\langle\lambda\setminus{E^{(1)}}\rangle_{PC}$ as well.
\end{itemize}

Let $F^{(2)},\ldots,F^{(h)}$ be an enumeration of the remaining bounded faces of $\lambda$.  Note that $Z(\langle\lambda\setminus{E^{(1)}}\rangle_P)=\displaystyle\sum_{k=2}^hz(F^{(k)})$.

Let $\widehat{\lambda}$ denote the "figure eight" graph, and let $\widehat{D^{(1)}},\widehat{D^{(2)}}$ denote its two faces.  Let $x$ denote the vertex in $\widehat{\lambda}$.  From $\widehat{\lambda}$ we will form a member of $P$, and eventually a member of $PC$ which will replace $\langle\lambda\rangle_{PC}$ in the construction of $\langle\Lambda\rangle_{PC}$.  Define $H(\langle\widehat{\lambda}\rangle_P)\colonequals(1+\frac{\nu_1}{2})H(\langle\lambda\rangle_P)$.  Define $z(\widehat{D^{(1)}})\colonequals{z(F^{(1)})}$ and $z(\widehat{D^{(2)}})\colonequals{Z(\langle\lambda\setminus{E^{(1)}}\rangle_P)}$.  Distinguish $z(D^{(i)})$ points in $\partial{D^{(i)}}$ for $i=1,2$, distinguishing $x$ if and only if $z$ is distinguished as a vertex in $\langle\lambda\rangle_P$.  Define $a(x)\colonequals{a(z)}$ where the value of $a(z)$ comes from $\langle\lambda\rangle_P$.  With this data, we have a member of $P$, namely $\langle\widehat{\lambda}\rangle_P$.

Let $\langle\xi_{F^{(1)}}\rangle_{PC}$ be the member of $PC$ which was associated to $F^{(1)}$ in the construction of $\langle\lambda\rangle_{PC}$.  Then we associate $\langle\xi_{F^{(1)}}\rangle_{PC}$ to $\widehat{D^{(1)}}$, and $\langle\lambda\setminus{E^{(1)}}\rangle_{PC}$ to $\widehat{D^{(2)}}$.  (Note that from now on we will use $\widehat{\lambda\setminus E^{(1)}}$ to refer to the graph $\lambda\setminus E^{(1)}$, as we are using it in the construction of $\langle\widehat{\Lambda}\rangle_{PC}$.)  We now wish to define $g_{\widehat{D^{(1)}}}$ and $g_{\widehat{D^{(2)}}}$.

Let $y^{(1)},\ldots,y^{(z(\widehat{D^{(1)}}))}$ be the distinguished points in $\partial{F^{(1)}}$ listed in increasing order, with $y^{(1)}=z$ if $z$ is distinguished in $\langle\lambda\rangle_P$, and $y^{(1)}$ the first distinguished point after $z$ in $\partial{F^{(1)}}$ otherwise.  Let $x^{(1)},\ldots,x^{({z(\widehat{D^{(1)}})})}$ be the distinguished points in $\partial{\widehat{D^{(1)}}}$ listed in increasing order, with $x^{(1)}=x$ if $x$ is distinguished, and $x^{(1)}$ the first distinguished point in the positive direction from $x$ in $\partial{\widehat{D^{(1)}}}$ otherwise.  Then for $1\leq{i}\leq{z}(\widehat{D^{(1)}})$, we define $g_{\widehat{D^{(1)}}}(x^{(i)})\colonequals{g_{F^{(1)}}}(y^{(i)})$.  

In order to define $g_{\widehat{D^{(2)}}}$, we define an enumeration of the distinguished points in $\widehat{\lambda\setminus E^{(1)}}$.  Let $E^{(2)}$ denote the edge in $\lambda$ which is directly after $E^{(1)}$ as $\lambda$ is traversed with positive orientation.  Define $y^{(1)}$ to be $z$ if $z$ is a distinguished point in $\widehat{\lambda\setminus E^{(1)}}$.  Otherwise define $y^{(1)}$ to be the first distinguished point after $z$ in $\widehat{\lambda\setminus E^{(1)}}$ as $\widehat{\lambda\setminus E^{(1)}}$ is traversed with a positive orientation beginning with $E^{(2)}$.  Continue traversing $\widehat{\lambda\setminus E^{(1)}}$ with a positive orientation, and let $y^{(2)},\ldots,y^{(z(\widehat{D^{(2)}}))}$ be the distinguished points after $y^{(1)}$ of $\widehat{\lambda\setminus E^{(1)}}$ as they appear as $\widehat{\lambda\setminus E^{(1)}}$ is traversed one full time beginning with $E^{(2)}$.  Note that a distinguished point will appear in this list exactly $n+1$ times if it is a vertex of $\widehat{\lambda\setminus E^{(1)}}$ with multiplicity $n$.  Now let $x^{(1)},\ldots,x^{(z(\widehat{D^{(2)}}))}$ be an enumeration of the distinguished points in $\partial{\widehat{D^{(2)}}}$ as they appear in increasing order beginning with $x$ if $x$ is a distinguished point, and beginning with the first distinguished point after $x$ otherwise.  Finally we define $g_{\widehat{D^{(2)}}}(x^{(i)})\colonequals{y^{(i)}}$ for each $i$.  With this data we have a member of $PC$, namely $\langle\widehat{\lambda}\rangle_{PC}$.

We construct $\langle\widehat{\Lambda}\rangle_{PC}$ in every respect the same as $\langle\Lambda\rangle_{PC}$, except that $\langle\lambda\rangle_{PC}$ will be replaced in this construction with $\langle\widehat{\lambda}\rangle_{PC}$.  If $\langle\lambda\rangle_{PC}=\langle\Lambda\rangle_{PC}$, then we are done, and we define $\langle\widehat{\Lambda}\rangle_{PC}\colonequals\langle\widehat{\lambda}\rangle_{PC}$.  If $\langle\lambda\rangle_{PC}\neq\langle\Lambda\rangle_{PC}$, then $\langle\lambda\rangle_{PC}$ was associated to some face $D$ of $\langle\psi\rangle_P$ a member of $P$ during the construction of $\langle\Lambda\rangle_{PC}$.  By identical reasoning as in Case~\ref{case:v_1^M>0 single zero.}, $H(\langle\widehat{\lambda}\rangle_P)<H(\langle\psi\rangle_P)$, so it is legal to assign $\langle\widehat{\lambda}\rangle_{PC}$ to $D$ as we construct $\langle\widehat{\Lambda}\rangle_{PC}$.  $\langle\widehat{\Lambda}\rangle_{PC}$ may inherit all of its data from $\langle\Lambda\rangle_{PC}$ except the gradient function $g_D$ (and of course $\langle\lambda\rangle_{PC}$, which we have exchanged for $\langle\widehat{\lambda}\rangle_{PC}$).  $g_D$ denotes the gradient map for $D$ in $\langle\Lambda\rangle_{PC}$ (which maps the distinguished points in $\partial{D}$ to the distinguished points in $\lambda$).  $\widehat{g_D}$ will denote the gradient map for $D$ in $\langle\widehat{\Lambda}\rangle_{PC}$ (which we are about to define and which will map the distinguished points in $\partial{D}$ to the distinguished points in $\widehat{\lambda}$).  To construct $\widehat{g_D}$, we have two possible cases, first that $z$ is a distinguished point in $\lambda$, and, in fact, the only distinct distinguished point in $\lambda$, and second that there are distinguished points in $\lambda$ which are distinct from $z$.

\begin{subcase}
$z$ is a distinguished point in $\lambda$, and $z$ is the only distinct distinguished point in $\lambda$.
\end{subcase}

Since $z$ is the only distinguished point in $\lambda$, all the distinguished points in $\widehat{\lambda}$ are contained in $\partial\widehat{D^{(2)}}$.  Let $x^{(1)},\ldots,x^{(z(D))}$ be an enumeration of the distinguished points in $\partial{D}$ listed in increasing order.  Let $y^{(1)},\ldots,y^{(z(D))}$ be an enumeration of the distinguished points in $\widehat{\lambda}$ listed in the order in which they appear as $\widehat{\lambda}$ is traversed, beginning and ending with $x$.  Then we define $\widehat{g_D}(x^{(i)})\colonequals{y^{(i)}}$ for each $i$.

\begin{subcase}
There are distinguished points in $\lambda$ which are distinct from $z$.
\end{subcase}

We define enumerations of the distinguished points of $\lambda$ and of $\widehat{\lambda}$ which we will then use to define $\widehat{g_D}$.

Let $y^{(1)},\ldots,y^{(Z(\langle\lambda\rangle_P))}$ be an enumeration of the distinguished points in $\lambda$ in the order in which they appear as $\lambda$ is traversed with positive orientation one full time beginning with the edge $E^{(1)}$.  $y^{(1)}=z$ if $z$ is a distinguished point in $\lambda$, and $y^{(1)}$ is the first distinguished point in $\partial F^{(1)}$ after $z$ otherwise.

Let $\widehat{y^{(1)}},\ldots,\widehat{y^{(Z(\langle\widehat{\lambda}\rangle_P))}}$ be an enumeration of the distinguished points in $\widehat{\lambda}$ in the order in which they appear as $\widehat{\lambda}$ is traversed with positive orientation one full time beginning with the boundary of $\widehat{D^{(1)}}$, with $\widehat{y^{(1)}}=x$ if $x$ is a distinguished point in $\widehat{\lambda}$, and $\widehat{y^{(1)}}$ is the first distinguished point in $\partial\widehat{D^{(1)}}$ after $x$ otherwise.  (Recall that $Z(\langle\widehat{\lambda}\rangle_P)=Z(\langle\lambda\rangle_P)=z(D)$.)

Let $z^{(1)},\ldots,z^{(z(D))}$ be any enumeration of the distinguished points in $\partial{D}$ in the order in which they appear around $\partial D$ such that $g_D(z^{(i)})=y^{(i)}$ for each $i\in\{1,\ldots,z(D)\}$.  Now for $i\in\{1,\ldots,z(D)\}$, we define $\widehat{g_D}(z^{(i)})\colonequals\widehat{y^{(i)}}$.  With this definition, we have all the data needed for a member of $PC$, namely $\langle\widehat{\Lambda}\rangle_{PC}$.

Notice, then, that by the construction of $\langle\widehat{\Lambda}\rangle_{PC}$, the critical values of $\langle\widehat{\Lambda}\rangle_{PC}$ are exactly

\[
\widehat{v_1}\colonequals\left({v_1}^{(1)},\ldots,{v_1}^{(M-1)},\left(1+\frac{\nu_1}{2}\right){v_1}^{(M)},{v_1}^{(M+1)},\ldots,{v_1}^{(N-1)}\right)\in{V_{N-1}}.
\]

Similarly as in Case~\ref{case:v_1^M>0 single zero.}, $\widehat{v_1}$ has atypicallity degree less than $M$, and

\[
|v_1-\widehat{v_1}|=\frac{\nu_1}{2}|{v_1}^{(M)}|<\frac{\nu_1}{2}<\nu_1.
\]

We will call the method of construction of $\langle\widehat{\lambda}\rangle_{PC}$ found in Case~\ref{case:v_1^M>0 multiple zero.} the "scattering method", as $\langle\widehat{\lambda}\rangle_{PC}$ is constructed by "scattering" one of the vertices of $\lambda$.  Figure~\ref{fig: Depiction of Scattering Method.} gives a visual depiction of the scattering method of construction of $\langle\widehat{\Lambda}\rangle_{PC}$.

\begin{figure}[H]
	\centering
		\includegraphics[width=.8\textwidth]{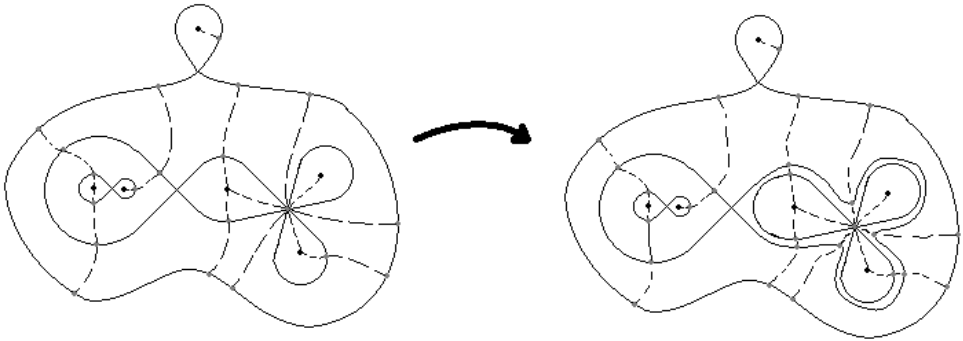}
	\caption{Depiction of Scattering Method Construction of $\langle\widehat{\Lambda}\rangle_{PC}$}
	\label{fig: Depiction of Scattering Method.}
\end{figure}

As a result of Cases~\ref{case:|{v_1}^{(M)}|=0.}, \ref{case:v_1^M>0 single zero.}, and \ref{case:v_1^M>0 multiple zero.}, we now have a member of $PC$, $\langle\widehat{\Lambda}\rangle_{PC}$, with critical values $\widehat{v_1}$ such that $|v_1-\widehat{v_1}|<\nu_1$ and the atypicallity degree of $\widehat{v_1}$ is strictly less than $M$.  Note also that by construction, for each $k\in\{1,\ldots,N-1\}$, $\arg(\widehat{{v_1}^{(k)}})=\arg({v_1}^{(k)})$, and in the special case that $M=N-1$, by Item~\ref{const: nu_1 item 4.} in the choice of $\nu_1$, $\widehat{v_1}$ is still a member of $V$.  In general if $\langle\lambda\rangle_{PC}$ is any member of $PC$ used in the construction of $\langle\Lambda\rangle_{PC}$, we let $\langle\widehat{\lambda}\rangle_{PC}$ denote the member of $PC$ which takes the place of $\langle\lambda\rangle_{PC}$ in the construction of $\langle\widehat{\Lambda}\rangle_{PC}$ (whether or not this is the same member of $PC$ as $\langle\lambda\rangle_{PC}$).

\subsection{CHOICE OF $\widehat{u_1}$ AND $u_1$, AND DEFINITION OF $\langle\widetilde{\Lambda}\rangle_{PC}$}

By the induction assumption there is some $\widehat{u_1}=(\widehat{{u_1}^{(1)}},\ldots,\widehat{{u_1}^{(N-1)}})\in\Theta^{-1}(\widehat{v_1})$ such that $\Pi(p_{\widehat{u_1}},G_{p_{\widehat{u_1}}})=\langle\widehat{\Lambda}\rangle_{PC}$.  By Item~\ref{const: nu_1 item 2.} in the choice of $\nu_1$, there is a $u_1=({u_1}^{(1)},\ldots,{u_1}^{(N-1)})\in\mathbb{C}^{N-1}$ such that $\Theta(u_1)=v_1$ and $|u_1-\widehat{u_1}|<\dfrac{\rho_1}{4}$.  Define $\langle\widetilde{\Lambda}\rangle_{PC}\colonequals\Pi(p_{u_1},G_{p_{u_1}})$.  Our goal is to show that $\langle\Lambda\rangle_{PC}=\langle\widetilde{\Lambda}\rangle_{PC}$.

We will begin this process in Sub-section~\ref{subsect: Choice of widehat{x} and widetilde{x}.} by selecting vertices of $\langle\widehat{\Lambda}\rangle_{PC}$ and $\langle\widetilde{\Lambda}\rangle_{PC}$ to correspond to any given vertex in $\langle\Lambda\rangle_{PC}$.  In Sub-sections~\ref{subsect: Choice of widehat{E}.}~\&~\ref{subsect: Choice of widetilde{D}.} we will similarly choose edges and graph faces respectively of the graphs used in the construction of $\langle\widehat{\Lambda}\rangle_{PC}$ to correspond to given edges and graph faces of $\langle\Lambda\rangle_{PC}$.

\subsection{CHOICE OF A VERTEX $\widehat{x}$ IN $\langle\widehat{\Lambda}\rangle_{PC}$ AND A VERTEX $\widetilde{x}$ IN $\langle\widetilde{\Lambda}\rangle_{PC}$ TO CORRESPOND TO EACH VERTEX $x$ IN $\langle\Lambda\rangle_{PC}$}\label{subsect: Choice of widehat{x} and widetilde{x}.}

Let $x$ denote one of the vertices of one of the members of $P$ used to construct $\langle\Lambda\rangle_{PC}$.  We wish to find a vertex $\widetilde{x}$ in one of the members of $P$ used to construct $\langle\widetilde{\Lambda}\rangle_{PC}$ (ie one of the critical points of $p_u$) which corresponds naturally to $x$.  In order to do this, we will first select a vertex $\widehat{x}$ of one of the members of $P$ used to construct $\langle\widehat{\Lambda}\rangle_{PC}$, which naturally arises from $x$.  Let $\langle\xi\rangle_P$ denote the member of $P$ which contains $x$.  Our selections first  of $\widehat{x}$ and next of $\widetilde{x}$ depends on which case was used to construct $\langle\widehat{\xi}\rangle_{PC}$.  We begin by selecting $\widehat{x}$ by cases.

\begin{case}
$\xi$ was unchanged in the construction of $\langle\widehat{\Lambda}\rangle_{PC}$.
\end{case}

In this case we may just define $\widehat{x}$ to be the vertex $x$ viewed as a vertex in $\widehat{\xi}$ rather than in $\xi$.

\begin{case}\label{case: Choice of widetilde{x}, xi was changed.}
$\xi$  was changed in the construction of $\langle\widehat{\Lambda}\rangle_{PC}$.
\end{case}

If $\langle\xi\rangle_P$ is a single point member of $P$, then as described in Case~\ref{case:|{v_1}^{(M)}|=0.}, the member of $PC$ which replaces $\langle\xi\rangle_{PC}$ is a graph member $\langle\widehat{\lambda}\rangle_{PC}$ of $PC$, where $\widehat{\lambda}$ consists of a "figure eight graph", with a single point member of $PC$ assigned to each face.  In this case we let $\widehat{x}$ denote the vertex of $\widehat{\lambda}$.

Suppose now that $\langle\xi\rangle_P$ is a graph member of $P$, and thus $\langle\widehat{\xi}\rangle_{PC}$ was constructed using the scattering method (as described in Case~\ref{case:v_1^M>0 multiple zero.}).  We will again let $F^{(1)}$ denote the face of $\xi$ which has a single edge $E^{(1)}$ of $\xi$ as its boundary, and which we split off to form $\langle\widehat{\xi}\rangle_{PC}$.  Let $x^{(1)}$ denote the vertex of $\xi$ incident to $E^{(1)}$.  If $x=x^{(1)}$, then we let $\widehat{x}$ denote the vertex in $\widehat{\xi}$.  If $x\neq x^{(1)}$, then we let $\widehat{x}$ denote the vertex $x$ viewed as a vertex in $\widehat{\xi\setminus E^{(1)}}$ (recall that $\widehat{\langle\xi\setminus E^{(1)}}\rangle_{PC}$ was assigned to one of the faces of $\widehat{\xi}$).

Now that $\widehat{x}$ has been defined using one of the sub-cases above, we wish to define a vertex $\widetilde{x}$ in one of the members of $P$ used to construct $\langle\widetilde{\Lambda}\rangle_{PC}$.  Let $i\in\{1,\ldots,N-1\}$ be one of the indices such that $\widehat{{u_1}^{(i)}}$ is the critical point of $p_{\widehat{u_1}}$ which gives rise to $\widehat{x}$ in $\Pi(p_{\widehat{u_1}},G_{p_{\widehat{u_1}}})$.  Then we wish to define $\widetilde{x}$ to be ${u_1}^{(i)}$.  To show that this definition is well defined, let  $i,j\in\{1,\ldots,N-1\}$ be given such that $\widehat{{u_1}^{(i)}}=\widehat{{u_1}^{(j)}}$.  It suffices to show that ${u_1}^{(i)}={u_1}^{(j)}$.

By choice of $\widehat{u_1}$, $|u_1-\widehat{u_1}|<\rho_1$, so since $\widehat{{u_1}^{(i)}}=\widehat{{u_1}^{(j)}}$,

\[
|{u_1}^{(i)}-{u_1}^{(j)}|\leq|{u_1}^{(i)}-\widehat{{u_1}^{(i)}}|+|\widehat{{u_1}^{(j)}}-{u_1}^{(j)}|<2\rho_1.
\]

But by Item~\ref{const: rho_1 item 6.} in the choice of $\rho_1$, $\rho_1<\dfrac{\text{mindiff}({u_1}^{(1)},\ldots,{u_1}^{(N-1)})}{2}$, and therefore ${u_1}^{(i)}={u_1}^{(j)}$.  Thus the choice of $\widetilde{x}$ is well defined.

In the case where $\langle\widehat{\Lambda}\rangle_{PC}$ is constructed using the scattering method, we would like to define an additional point in one of the graphs used to construct $\langle\widehat{\Lambda}\rangle_{PC}$, which may not appear as a $\widehat{x}$ for any vertex $x$ of a graph used to construct $\langle\Lambda\rangle_{PC}$.  Let $\langle\lambda\rangle_{PC}$ denote the member of $PC$ used in the construction of $\langle\Lambda\rangle_{PC}$ which is replaced in the scattering method construction of $\langle\widehat{\Lambda}\rangle_{PC}$.

If $x^{(1)}$ is still a vertex of $\widehat{\lambda\setminus E^{(1)}}$, we let $\widehat{x^{(0)}}$ denote this vertex, and let $\widetilde{x^{(0)}}$ be defined in a similar manner as the $\widetilde{x}$ are defined above.  Suppose now that $x^{(1)}$ is not a vertex of $\widehat{\lambda\setminus E^{(1)}}$.  First a definition.

\begin{definition}
Let $\langle\xi\rangle_P$ be a graph member of $P$, and let $E$ be an edge of $\xi$.  Let $x$ denote the initial point of $E$ and let $y$ denote the final point of $E$ with respect to positive orientation around $\xi$.  We wish to define a quantity which we will call the change in argument along $E$ (with respect to $\langle\xi\rangle_P$).  Define $r_1\colonequals{a(x)}$ and define $r_2\colonequals{a(y)}$.  If $a(y)=0$, then we instead define $r_2\colonequals2\pi$.  Then we define the change in argument along $E$ with respect to $\langle\xi\rangle_P$ to be $r_2-r_1+2\pi{n}$, where $n$ denotes the number of distinguished points in $E$ which are not end points of $E$.  Note that if $\langle\xi\rangle_P$ arises as a critical level curve of some analytic function $f$, then the change in argument along $E$ with respect to $\langle\xi\rangle_P$ is the same as the change in $\arg(f)$ along $E$.
\end{definition}

Let $E^{(2)}$ denote the next edge in $\lambda$ after $E^{(1)}$ as $\lambda$ is traversed with positive orientation.  Since $\langle\widehat{\lambda}\rangle_{PC}$ was formed using the scattering method, $\lambda$ must possess more than two bounded faces, so $E^{(2)}$ does not have both of its end points at $x^{(1)}$.  Let $K$ denote the number of distinct vertices in $\lambda$, and let $x^{(2)},\ldots,x^{(K)}$ be an enumeration of these vertices.  Let $j\in\{1,\ldots,K\}$ be the index of the end point of $E^{(2)}$ other than $x^{(1)}$.

Let $\Delta$ denote the change in argument along $E^{(2)}$ where $E^{(2)}$ is traversed from $x^{(j)}$ to $x^{(1)}$.  Let $\widehat{H}$ denote the edge of $\widehat{\lambda\setminus{E^{(1)}}}$ which contains the point $x^{(1)}$ (which is no longer a vertex of $\widehat{\lambda\setminus{E^{(1)}}}$).  Then let $\widehat{x^{(0)}}$ denote the point in $\widehat{H}$ such that the change in $\arg(p_{\widehat{u_1}})$ along the portion of $\widehat{H}$ beginning at $\widehat{x^{(j)}}$ and ending at $\widehat{x^{(0)}}$ is exactly $\Delta$.  In this case, $\widehat{x^{(0)}}$ is meant to represent the point $x^{(1)}$ in $\widehat{\lambda\setminus E^{(1)}}$, which is no longer a vertex in $\widehat{\lambda\setminus E^{(1)}}$, and we do not make any definition for $\widetilde{x^{(0)}}$.

\subsection{CHOICE OF AN EDGE $\widehat{E}$ IN $\langle\widehat{\Lambda}\rangle_{PC}$ TO CORRESPOND TO AN EDGE $E$ IN $\langle\Lambda\rangle_{PC}$}\label{subsect: Choice of widehat{E}.}

Let $\langle\lambda\rangle_P$ denote one of the members of $P$ used in the construction of $\langle\Lambda\rangle_{PC}$.  For any edge $E$ in $\lambda$, we will now select an edge, or a portion of an edge, of a graph used to construct $\langle\widehat{\Lambda}\rangle_{PC}$ (which we will call $\widehat{E}$) which corresponds naturally to $E$.

\begin{case}
$\lambda$ was unchanged in the construction of $\langle\widehat{\Lambda}\rangle_{PC}$.
\end{case}

In this case we just let $\widehat{E}$ denote the edge $E$, where $\lambda$ is now viewed as a graph used in the construction of $\langle\widehat{\Lambda}\rangle_{PC}$.

\begin{case}\label{case: Choice of widehat{E}, lambda was changed.}
$\lambda$ was changed in the construction of $\langle\widehat{\Lambda}\rangle_{PC}$.
\end{case}

In this case $\langle\widehat{\lambda}\rangle_{PC}$ was constructed using the scattering method.  We use the same notation as in Case~\ref{case: Choice of widetilde{x}, xi was changed.} (ie that of $F^{(1)}$, $E^{(1)}$, and $x^{(1)},\ldots,x^{(K)}$).  Let $\widehat{D^{(1)}}$ denote the face of $\widehat{\lambda}$ to which we assigned $\langle\xi_{\widehat{F^{(1)}}}\rangle_{PC}$.  Let $\widehat{D^{(2)}}$ denote the other face of $\widehat{\lambda}$, namely the one to which $\langle\widehat{\lambda\setminus{E^{(1)}}}\rangle_{PC}$ was assigned.  We have three possible sub-cases.

\begin{subcase}
$E=E^{(1)}$.
\end{subcase}

In this sub-case we define $\widehat{E}$ to be the edge of $\widehat{\lambda}$ which forms the boundary of $\widehat{D^{(1)}}$.

\begin{subcase}
$E\neq E^{(1)}$, and both of the end points of $E$ are still vertices in $\widehat{\lambda\setminus E^{(1)}}$.
\end{subcase}

Since both end points of $E$ are still vertices in $\widehat{\lambda\setminus E^{(1)}}$, $E$ is still an edge of $\widehat{\lambda\setminus E^{(1)}}$, so we may just define $\widehat{E}$ to be the edge $E$ viewed as an edge in $\widehat{\lambda\setminus E^{(1)}}$.

\begin{subcase}
$E\neq E^{(1)}$, and one of the end points of $E$ is not a vertex in $\widehat{\lambda\setminus E^{(1)}}$. 
\end{subcase}

The only vertex in $\lambda$ which might not be a vertex of $\widehat{\lambda\setminus E^{(1)}}$ is $x^{(1)}$.  Recall that $x^{(2)},\ldots,x^{(K)}$ is an enumeration of the other vertices in $\lambda$.  Since $\langle\widehat{\lambda}\rangle_{PC}$ was constructed using the scattering method, $\lambda$ must have more than two faces, and thus not both end points of $E$ are at $x^{(1)}$.  Let $j\in\{2,\ldots,K\}$ be the index of the end point of $E$ other than $x^{(1)}$.  Recall that $\widehat{x^{(1)}}$ is the vertex in $\widehat{\lambda}$, and $\widehat{x^{(0)}}$ is the point $x^{(1)}$ viewed still as a point in $\widehat{\lambda\setminus E^{(1)}}$, though it is no longer a vertex.  Let $\widehat{H}$ denote the edge of $\widehat{\lambda\setminus{E^{(1)}}}$ which contains the point $\widehat{x^{(0)}}$.  We define $\widehat{E}$ to be the portion of $\widehat{H}$ beginning at $\widehat{x^{(j)}}$ and ending at $\widehat{x^{(0)}}$.

Note that in each case, by the construction of $\langle\widehat{\Lambda}\rangle_{PC}$, the change in argument along $\widehat{E}$ with respect to $\langle\widehat{\lambda}\rangle_P$ is the same as the change in argument along $E$ with respect to $\langle\lambda\rangle_P$.

\subsection{CHOICE OF A GRAPH FACE $\widehat{D}$ IN $\langle\widehat{\Lambda}\rangle_{PC}$ TO CORRESPOND TO A GRAPH FACE $D$ IN $\langle\Lambda\rangle_{PC}$}\label{subsect: Choice of widetilde{D}.}

Again let $\langle\lambda\rangle_P$ be one of the members of $P$ used to construct $\langle\Lambda\rangle_{PC}$, and let $D$ be a face of $\lambda$.  We now pick a face $\widehat{D}$ of one of the graphs used in the construction of $\langle\widehat{\Lambda}\rangle_{PC}$ which corresponds to $D$.

Let $\langle\widehat{\lambda}\rangle_{PC}$ denote the member of $PC$ which replaced $\langle\lambda\rangle_{PC}$ in the construction of $\langle\widehat{\Lambda}\rangle_{PC}$.

\begin{case}
$\lambda$ was unchanged in the construction of $\langle\widehat{\Lambda}\rangle_{PC}$.
\end{case}

Then let $\widehat{D}$ denote the face $D$ of $\lambda$ viewed as a face of $\widehat{\lambda}$.

\begin{case}
$\lambda$ was changed in the construction of $\langle\widehat{\Lambda}\rangle_{PC}$.
\end{case}

In this case $\langle\widehat{\lambda}\rangle_{PC}$ was constructed using the scattering method.  We use the same notation as in Case~\ref{case: Choice of widehat{E}, lambda was changed.}.  If $D=F^{(1)}$ we define $\widehat{D}$ to be $\widehat{D^{(1)}}$.  If $D\neq{F^{(1)}}$, we define $\widehat{D}$ to be the face of $\widehat{\lambda\setminus{E^{(1)}}}$ which corresponds to $D$ in the construction of $\langle\widehat{\lambda\setminus{E^{(1)}}}\rangle_{PC}$.

We now let $\langle\lambda_D\rangle_{PC}$ and $\langle\widehat{\lambda_{\widehat{D}}}\rangle_{PC}$ be the members of $PC$ which are assigned to $D$ and $\widehat{D}$ in the construction of $\langle\Lambda\rangle_{PC}$ and $\langle\widehat{\Lambda}\rangle_{PC}$ respectively.  Notice that as a result of any of the three cases above, by the construction of $\langle\widehat{\Lambda}\rangle_{PC}$ $\partial D$ and $\partial \widehat{D}$ contain the same number of distinguished points.

\subsection{THE PLAN TO SHOW RECURSIVELY THAT $\langle\widetilde{\Lambda}\rangle_{PC}=\langle\Lambda\rangle_{PC}$}

We will show that $\langle\Lambda\rangle_{PC}=\langle\widetilde{\Lambda}\rangle_{PC}$ recursively, working "outside in", by doing the following steps.

\begin{enumerate}
\item\label{item:Step 1.}
Show that $\langle\Lambda\rangle_P=\langle\widetilde{\Lambda}\rangle_P$ by showing that the correspondence established in Sub-section~\ref{subsect: Choice of widehat{x} and widetilde{x}.} between the vertices of $\langle\Lambda\rangle_P$ and the vertices of $\langle\widetilde{\Lambda}\rangle_P$ respects the data contained in a member of $P$.  (That is, if $k\geq1$ is the number of vertices in $\Lambda$, and $u^{(1)},\ldots,u^{(k)}$ is an enumeration of these vertices, then $\widetilde{u^{(1)}},\ldots,\widetilde{u^{(k)}}$ is an enumeration of the vertices in $\widetilde{\Lambda}$ such that the following holds.  For each $i\in\{1,\ldots,k\}$, $a(u^{(i)})=a(\widetilde{u^{(i)}})$.  For each $i,j\in\{1,\ldots,k\}$, $\{u^{(i)}u^{(j)}\}$ is an edge in $\Lambda$ if and only if $\{\widetilde{u^{(i)}}\widetilde{u^{(j)}}\}$ is an edge in $\widetilde{\Lambda}$ and, moreover, if $\{u^{(i)}u^{(j)}\}$ is an edge in $\Lambda$, then $\{u^{(i)}u^{(j)}\}$ and $\{\widetilde{u^{(i)}}\widetilde{u^{(j)}}\}$ contain the same number of distinguished points (as edges in $\langle\Lambda\rangle_P$ and $\langle\widetilde{\Lambda}\rangle_P$ respectively).  Finally, if $n\geq2$ is the number of edges in $\Lambda$, and $\{u^{(i_1)}u^{(i_2)}\},\ldots,\{u^{(i_n)}u^{(i_{n+1})}\}$ is a list of edges of $\Lambda$ as they appear in order around $\Lambda$, then $\{\widetilde{u^{(i_1)}}\widetilde{u^{(i_2)}}\},\ldots,\{\widetilde{u^{(i_n)}}\widetilde{u^{(i_{n+1})}}\}$ is the order in which the edges of $\widetilde{\Lambda}$ appear around $\widetilde{\Lambda}$.  Note that this will immediately provide a well-defined correspondence between the bounded faces $D$ of $\Lambda$ and the bounded faces $\widetilde{D}$ of $\widetilde{\Lambda}$, and between the distinguished points $x$ of $\langle\Lambda\rangle_P$ and the distinguished points $\widetilde{x}$ of $\langle\widetilde{\Lambda}\rangle_P$.)  We will then conclude that $\langle\Lambda\rangle_P=\langle\widetilde{\Lambda}\rangle_P$.

\item\label{item:Step 2.}
For each bounded face $D$ of $\Lambda$, let $\langle\lambda_D\rangle_{PC}$ and $\langle\widetilde{\lambda_{\widetilde{D}}}\rangle_{PC}$ denote the members of $PC$ assigned to $D$ and $\widetilde{D}$ during the construction of $\langle\Lambda\rangle_{PC}$ and $\langle\widetilde{\Lambda}\rangle_{PC}$ respectively.  Show that $\langle\lambda_D\rangle_P=\langle\widetilde{\lambda_{\widetilde{D}}}\rangle_P$ as in the previous step, and show that the correspondence between the vertices of $\lambda_D$ and the vertices of $\widetilde{\lambda_{\widetilde{D}}}$ additionally respects the gradient maps $g_D$ and $g_{\widetilde{D}}$.  That is, if $x$ is one of the distinguished points of $\Lambda$ in $\partial{D}$, and $\widetilde{x}$ is the corresponding distinguished point of $\widetilde{\Lambda}$ in $\partial\widetilde{D}$, then show that $g_{\widetilde{D}}(\widetilde{x})$ is the distinguished point in $\widetilde{\lambda_{\widetilde{D}}}$ which corresponds to the distinguished point $g_D(x)$ in $\lambda_D$.

\item
For each bounded face $D$ of $\Lambda$, iterate Step~\ref{item:Step 2.} for each bounded face of $\Lambda_D$.  Continue recursively.

\end{enumerate}

Since $\langle\Lambda\rangle_{PC},\langle\widetilde{\Lambda}\rangle_{PC}$ are constructed using finitely many members of $P$, this process will terminate after finitely many steps.  When this process terminates, we will have shown that $\langle\Lambda\rangle_{PC}$ and $\langle\widetilde{\Lambda}\rangle_{PC}$ have all the same data, and are therefore equal.  Notice that the base case (Step~\ref{item:Step 1.} and Step~\ref{item:Step 2.} as written) is just a simpler case of the recursive step (which is Step~\ref{item:Step 1.} and Step~\ref{item:Step 2.} with any $\langle\lambda\rangle_{PC}$ used in the construction of $\langle\Lambda\rangle_{PC}$ inserted in the place of $\langle\Lambda\rangle_{PC}$).  Therefore we just do the recursive step.

Suppose that Step~\ref{item:Step 2.} has just been completed for some $\langle\lambda\rangle_P$ used to construct $\langle\Lambda\rangle_{PC}$, with corresponding $\langle\widetilde{\lambda}\rangle_P$ used to construct $\langle\widetilde{\Lambda}\rangle_{PC}$.  Let $D$ be one of the bounded faces of $\lambda$, and let $\widetilde{D}$ be the corresponding bounded face of $\widetilde{\lambda}$.  Let $\langle\lambda_D\rangle_{PC}$ and $\langle\widetilde{\lambda_{\widetilde{D}}}\rangle_{PC}$ be the members of $PC$ assigned to $D$ and $\widetilde{D}$ in the construction of $\langle\Lambda\rangle_{PC}$ and $\langle\widetilde{\Lambda}\rangle_{PC}$ respectively.  We wish to show that $\langle\lambda_D\rangle_P=\langle\widetilde{\lambda_{\widetilde{D}}}\rangle_P$, and that $g_D$ and $g_{\widetilde{D}}$ respect the correspondence between the distinguished points of $\langle\lambda\rangle_P$ and $\langle\lambda_D\rangle_P$ on the one hand, and those of $\langle\widetilde{\lambda}\rangle_P$ and $\langle\widetilde{\lambda_{\widetilde{D}}}\rangle_P$ on the other.

In the following argument, we make the assumption that $\langle\lambda_D\rangle_P$ is a graph member of $P$.  The case where $\langle\lambda_D\rangle_P$ is a single point member of $P$ requires a much simplified version of the same argument, so we omit it.  We will also assume that $\langle\widehat{\lambda_{\widehat{D}}}\rangle_{PC}$ was formed using the scattering method (and therefore $\langle\widehat{\lambda}\rangle_P$ is the same as $\langle\lambda\rangle_P$).  If $\langle\widehat{\lambda_{\widehat{D}}}\rangle_{PC}$ was not formed using the scattering method, the argument is substantially the same, though somewhat simpler, so we omit it as well.

\subsection{BUILDING THE CORRESPONDENCE BETWEEN THE VERTICES $x$ OF $\langle\Lambda\rangle_{PC}$ AND THE VERTICES $\widetilde{x}$ OF $\langle\widetilde{\Lambda}\rangle_{PC}$}

Still using the same notation as in Case~\ref{case: Choice of widehat{E}, lambda was changed.}, but now applied to $\lambda_D$, we will first show that for each $l\in\{1,\ldots,K\}$ (and for $l=0$ if $\widetilde{x^{(0)}}$ is defined), $\widetilde{x^{(l)}}$ is contained in $\widetilde{\lambda_{\widetilde{D}}}$.  In order to do this we must first show that each $\widetilde{x^{(l)}}$ is contained in $\widetilde{D}$.

\subsubsection{SHOW THAT IF $x$ IS A VERTEX IN $D$ THEN $\widetilde{x}$ IS IN $\widetilde{D}$}

Let $\widehat{\gamma}$ be a path which parameterizes $\partial\widehat{D}$ according to $\arg(p_{\widehat{u_1}})$.  Item~\ref{const: rho_1 item 3.} in the choice of $\rho_1$ guarantees that there is a corresponding path $\widetilde{\gamma}$ through a level curve of $p_{u_1}$ on which $|p_{u_1}|$ takes the same value as $|p_{\widehat{u_1}}|$ takes on $\widehat{\gamma}$, which is parameterized according to $\arg(p_{u_1})$, and such that $|\widetilde{\gamma}-\widehat{\gamma}|<\delta_2$.  We can then use the choice of $u_1$ and Item~\ref{const: delta_1 item 5.} in the choice of $\delta_1$ to show that $\widetilde{\gamma}$ parameterizes $\partial\widetilde{D}$.

Item~\ref{const: delta_1 item 4.} in the choice of $\delta_1$ and the fact that $|\widetilde{\gamma}-\widehat{\gamma}|<\delta_2$ now imply that if any critical point $\widetilde{x}$ of $p_{u_1}$ is in $\widetilde{D}$, then the corresponding critical point $\widehat{x}$ of $p_{\widehat{u_1}}$ is in $\widehat{D}$.  Due to the recursive assumption on $\langle\widetilde{\lambda}\rangle_P$ and $\langle\lambda\rangle_P$, $\widehat{D}$ contains the same number of critical points of $p_{\widehat{u_1}}$ as $\widetilde{D}$ contains of $p_{u_1}$.  Therefore $\widetilde{x}\in\widetilde{D}$ if and only if $\widehat{x}\in\widehat{D}$.  We conclude that if $x$ is a vertex of $\langle\Lambda\rangle_{PC}$ in $D$, then by construction of $\langle\widehat{\Lambda}\rangle_{PC}$, $\widehat{x}\in\widehat{D}$, and thus $\widetilde{x}\in\widetilde{D}$.

\subsubsection{SHOW THAT IF $x$ IS A VERTEX IN $\lambda_D$ THEN $\widetilde{x}$ IS A VERTEX IN $\widetilde{\lambda_{\widetilde{D}}}$}

We now wish to show that for each $l\in\{1,\ldots,K\}$ (and for $l=0$ if $\widetilde{x^{(0)}}$ is defined), $\widetilde{x^{(l)}}\in\widetilde{\lambda_{\widetilde{D}}}$.  For each such $l$, let $t_l$ be one of the indices in $\{1,\ldots,N-1\}$ such that ${u_1}^{(t_l)}=\widetilde{x^{(l)}}$.  If $l\in\{2,\ldots,K\}$ (or $l=0$ if $\widetilde{x^{(0)}}$ is defined), then $\widetilde{x^{(l)}}\neq{u_1}^{(M)}$, so

\[
|p_{u_1}(\widetilde{x^{(l)}})|=|p_{u_1}({u_1}^{(t_l)})|=|{v_1}^{(t_l)}|=|\widehat{{v_1}^{(t_l)}}|=H(\langle\widehat{\lambda_D\setminus{E^{(1)}}}\rangle_P)=H(\langle\lambda_D\rangle_P).
\]

For $l=1$, we may let $t_l=M$, so

\[
|p_{u_1}(\widetilde{x^{(l)}})|=|p_{u_1}({u_1}^{(M)})|=|{v_1}^{(M)}|=\left|\frac{1}{1+\frac{\nu_1}{2}}\widehat{{v_1}^{(t_l)}}\right|=\frac{1}{1+\frac{\nu_1}{2}}H(\langle\widehat{\lambda_{\widehat{D}}}\rangle_P)=H(\langle\lambda_D\rangle_P).
\]

As a result of Theorem~\ref{thm: Two level curves imply crit. level curve.}, if there are distinct level curves of $p_{u_1}$ in $\widetilde{D}$ on which $|p_{u_1}|=H(\langle\lambda_D\rangle_P)$, then there is some critical point $z$ in $\widetilde{D}$ such that $|p_{u_1}(z)|>H(\langle\lambda_D\rangle_P)$.  Suppose by way of contradiction that this is the case.  Then $|p_{u_1}(z)|\geq H(\langle\lambda_D\rangle_P)+\text{mindiff}(|{v_1}^{(1)}|,\ldots,|{v_1}^{(N-1)}|)$ by definition of $\text{mindiff}$.  Choose some $t\in\{1,\ldots,N-1\}$, such that ${u_1}^{(t)}=z$.  Then $\widehat{{u_1}^{(t)}}\in\widehat{D}$ and, since $t_1=M$, $t\neq{M}$, so $|\widehat{{v_1}^{(t)}}|=|{v_1}^{(t)}|\geq H(\langle\lambda_D\rangle_P)+\text{mindiff}(v_1)>H(\langle\widehat{\lambda_{\widehat{D}}}\rangle_P)$ by Item~\ref{const: delta_1 item 8.} in the choice of $\delta_1$ (since $\delta_1<\nu_1$).  Thus, by the maximum modulus theorem, $\widehat{{u_1}^{(t)}}$ is not in one of the bounded faces of $\widehat{\lambda_{\widehat{D}}}$, which is a contradiction of the definition of $\langle\widehat{\lambda_{\widehat{D}}}\rangle_{PC}$.  Thus we conclude that each point $\widetilde{x^{(l)}}$ is in the unique critical level curve of $p_{u_1}$ in $\widetilde{D}$ on which $|p_{u_1}|$ takes its largest value, namely $\widetilde{\lambda_{\widetilde{D}}}$.

\subsection{CHOICE OF AN EDGE $\widetilde{E}$ of $\langle\widetilde{\Lambda}\rangle_{PC}$ TO CORRESPOND TO A GIVEN EDGE $E$ OF $\langle\Lambda\rangle_{PC}$}\label{subsect: Choice of widetilde{E}.}

Let $L\geq2$ denote the number of edges in $\lambda_D$, and let $E^{(2)},\ldots,E^{(L)}$ be an enumeration of the edges of $\lambda_D$ other than $E^{(1)}$ in the order in which they appear around $\lambda_D$ after $E^{(1)}$ with positive orientation.  Choose some $k_0\in\{1,\ldots,L\}$.  We will find an edge $\widetilde{E^{(k_0)}}$ of $\widetilde{\lambda_{\widetilde{D}}}$ which corresponds to the edge $E^{(k_0)}$ in $\lambda_D$.

\begin{case}\label{case: find edge, x^{(1)} is not an end point of E^{(k_0)}.}
$x^{(1)}$ is not an end point of $E^{(k_0)}$.
\end{case}

Let $i,j\in\{1,\ldots,L\}$ be the indices such that $x^{(i)}$ is the initial point of $E^{(k_0)}$ and $x^{(j)}$ is the final point of $E^{(k_0)}$.  Since $x^{(1)}$ is not an end point of $E^{(k_0)}$, $\widehat{E^{(k_1)}}$ is an edge in $\widehat{\lambda_D\setminus E^{(1)}}$, having end points $\widehat{x^{(i)}}$ and $\widehat{x^{(j)}}$.  Recall that, as we are viewing $\langle\widehat{\Lambda}\rangle_{PC}$ as embedded in $\mathbb{C}$ as the critical level curves of $p_{\widehat{u_1}}$, we have $\widehat{x^{(i)}}=\widehat{{u_1}^{(t_i)}}$ and $\widehat{x^{(j)}}=\widehat{{u_1}^{(t_j)}}$.  In addition, $a(x^{(i)})=a(\widehat{x^{(i)}})$ and $a(x^{(j)})=a(\widehat{x^{(j)}})$ by the construction of $\langle\widehat{\Lambda}\rangle_{PC}$, so

\[
a(x^{(i)})=a(\widehat{x^{(i)}})=\arg(p_{\widehat{u_1}}(\widehat{{u_1}^{(t_i)}}))\text{ and }a(x^{(j)})=a(\widehat{x^{(j)}})=\arg(p_{\widehat{u_1}}(\widehat{{u_1}^{(t_j)}})).
\]

Assume that $a(x^{(i)})=0$ (otherwise make the appropriate minor changes throughout the argument).  Let $\Delta>0$ denote the change in $\arg(p_{\widehat{u_1}})$ along $\widehat{E^{(k_0)}}$, and let $\widehat{\gamma}$ be a parameterization of $\widehat{E^{(k_0)}}$ according to $\arg(p_{\widehat{u_1}})$ beginning at $\widehat{x^{(i)}}$.  By Item~\ref{const: rho_1 item 3.} in the choice of $\rho_1$, we may find a path $\widetilde{\gamma}:[0,\Delta]\to\widetilde{\lambda_{\widetilde{D}}}$ such that $\widetilde{\gamma}(0)={u_1}^{(t_i)}$, and for each $r\in[0,\Delta]$, $\arg(p_{u_1}(\widetilde{\gamma}(r)))=r$, and $|\widetilde{\gamma}(r)-\widehat{\gamma}(r)|<\delta_2$.

Now $\widehat{\gamma}(\Delta)=\widehat{{u_1}^{(t_j)}}$, so $\Delta=\arg(\widehat{{v_1}^{(t_j)}})$ mod $2\pi$.  Therefore

\[
p_{u_1}(\widetilde{\gamma}(\Delta))=|{v_1}^{(t_j)}|e^{i\arg({v_1}^{(t_j)})}={v_1}^{(t_j)}.
\]

Moreover,

\[
|\widetilde{\gamma}(\Delta)-{u_1}^{(t_j)}|\leq|\widetilde{\gamma}(\Delta)-\widehat{\gamma}(\Delta)|+|\widehat{\gamma}(\Delta)-\widehat{{u_1}^{(t_j)}}|+|\widehat{{u_1}^{(t_j)}}-{u_1}^{(t_j)}|.
\]

However $\widehat{{u_1}^{(t_j)}}=\widehat{\gamma}(\Delta)$, so we have

\[
|\widetilde{\gamma}(\Delta)-{u_1}^{(t_j)}|\leq|\widetilde{\gamma}(\Delta)-\widehat{\gamma}(\Delta)|+|\widehat{{u_1}^{(t_j)}}-{u_1}^{(t_j)}|<\delta_2+\rho_1<2\delta_2.
\]

By Item~\ref{const: delta_1 item 6.} in the choice of $\delta_1$, there is no point in $B_{2\delta_2}({u_1}^{(t_j)})\setminus\{{u_1}^{(t_j)}\}$ at which $p_{u_1}$ takes the value ${v_1}^{(t_j)}$, so we conclude that $\widetilde{\gamma}(\Delta)={u_1}^{({t_j})}$.  Therefore we have that $\widetilde{\gamma}$ is a path from ${u_1}^{(t_i)}$ to ${u_1}^{(t_j)}$ through $\widetilde{\lambda_{\widetilde{D}}}$.  

Since $\widetilde{\gamma}$ and $\widehat{\gamma}$ stay within $\delta_2$ of each other, and $\widehat{\gamma}$ does not intersect any critical point of $p_{\widehat{u_1}}$ other than at its end points, a geometric argument may be made (using, for example, Items~\ref{const: delta_1 item 1.}~\&~\ref{const: delta_1 item 7.} in the choice of $\delta_1$ and Items~\ref{const: delta_2 item 2.}~\&~\ref{const: delta_2 item 4.} in the choice of $\delta_2$) on the respective images of $p_{u_1}$ and $p_{\widehat{u_1}}$ on $\widetilde{\gamma}$ and $\widehat{\gamma}$ to the effect that $\widetilde{\gamma}$ does not contain any critical point of $p_{u_1}$ except at its end points.  That is, $\widetilde{\gamma}$ parameterizes a single edge of $\widetilde{\lambda_{\widetilde{D}}}$ from ${u_1}^{(t_i)}$ to ${u_1}^{(t_j)}$ (thus from $\widetilde{x^{(i)}}$ to $\widetilde{x^{(j)}}$).  We let $\widetilde{E^{(k_0)}}$ denote this edge.

\begin{case}
$x^{(1)}$ is an end point of $E^{(k_0)}$.
\end{case}

In this case, there are three sub-cases to consider, which correspond whether or not $E^{(k_0)}=E^{(1)}$ (the edge which was split off of $\lambda_D$ in the scattering method construction of $\langle\widehat{\lambda_{\widehat{D}}}\rangle_{PC}$) and, if not, whether or not $\widetilde{x^{(0)}}$ is defined.

\begin{subcase}\label{subcase:k_0=1.}
$k_0=1$ (recall that $E^{(1)}=\partial{F^{(1)}}$).
\end{subcase}

The definition of $\widetilde{E^{(k_0)}}$ in this sub-case is essentially the same as in Sub-case~\ref{case: find edge, x^{(1)} is not an end point of E^{(k_0)}.}, since both end points of $\widehat{E^{(k_0)}}$ are critical points of $p_{\widehat{u_1}}$ and $\widehat{E^{(k_0)}}$ is a full edge of one of the single critical level curves of $p_{\widehat{u_1}}$.

\begin{subcase}\label{subcase:k_0neq1, x^{(1)} still a vertex.}
$k_0\neq1$, but $x^{(1)}$ is still a vertex of $\lambda_D\setminus{E^{(1)}}$.
\end{subcase}

Since $k_0\neq1$, $E^{(k_0)}$ does not form $\partial{F^{(1)}}$.  As noted earlier, the fact that $\langle\widehat{\lambda_{\widehat{D}}}\rangle_{PC}$ was formed using the scattering method implies that at least one end point of $E^{(k_0)}$ is not at $x^{(1)}$.  Let $i\in\{2,\ldots,K\}$ be the index so that $x^{(i)}$ is the other end point of $E^{(k_0)}$.  In this sub-case, again both end points of $\widehat{E^{(k_0)}}$ are critical points of $p_{\widehat{u_1}}$, and $\widehat{E^{(k_0)}}$ is a full edge of one of the single critical level curves of $p_{\widehat{u_1}}$, so the method of construction of $\widetilde{E^{(k_0)}}$ is again identical to that in Sub-case~\ref{case: find edge, x^{(1)} is not an end point of E^{(k_0)}.}, however in this sub-case, $\widehat{E^{(k_0)}}$ is an edge in $\widehat{\lambda_D\setminus E^{(1)}}$, with end points $\widehat{x^{(i)}}$ and $\widehat{x^{(0)}}$.  Let $\widehat{\gamma}$ be a parameterization of this edge according to $\arg(p_{\widehat{u_1}})$.

As in Sub-case~\ref{case: find edge, x^{(1)} is not an end point of E^{(k_0)}.}, we conclude that there is an edge $\widetilde{E^{(k_0)}}$ in $\widetilde{\lambda_{\widetilde{D}}}$ with end points $\widetilde{x^{(i)}}$ and $\widetilde{x^{(0)}}$, and a parameterization $\widetilde{\gamma}$ of $\widetilde{E^{(k_0)}}$ according to $\arg(p_{u_1})$ such that for each $r$, $|\widehat{\gamma}(r)-\widetilde{\gamma}(r)|<\delta_2$.  Since the end points of $E^{(k_0)}$ are $x^{(i)}$ and $x^{(1)}$, we wish to show that $\widetilde{x^{(0)}}=\widetilde{x^{(1)}}$.

Define $\Gamma_1$ to be the straight line path from $\widetilde{x^{(1)}}$ to $\widehat{x^{(1)}}$.  Let $\Gamma_2$ denote the portion of the gradient line of $p_{\widehat{u_1}}$ which connects $\widehat{x^{(1)}}$ with $\widehat{x^{(0)}}$.  Let $\Gamma_3$ denote the straight line path from $\widehat{x^{(0)}}$ to $\widetilde{x^{(0)}}$.  Let $\Gamma$ denote the concatenation of these three paths.  Using Item~\ref{const: rho_1 item 5.} in the choice of $\rho_1$ and Item~\ref{const: delta_2 item 4.} in the choice of $\delta_2$, along with Theorem~\ref{thm: Two level curves imply crit. level curve.}, one can make a basic argument based on the image of $p_{u_1}$ on $\Gamma$ that $\Gamma$ can not intersect any critical level curve of $p_{u_1}$ other than $\widetilde{\lambda_{\widetilde{D}}}$.  Therefore $\Gamma$ may be projected along gradient lines of $p_{u_1}$ to a path through $\widetilde{\lambda_{\widetilde{D}}}$ from $\widetilde{x^{(1)}}$ to $\widetilde{x^{(0)}}$.  Since $\Gamma_1\subset B_{\rho_1}(\widetilde{x^{(1)}})$, Item~\ref{const: delta_2 item 2.} in the choice of $\delta_2$ and Item~\ref{const: delta_1 item 7.} in the choice of $\delta_1$, along with some elementary trigonometry, now imply that $\Delta_{\arg}(p_{u_1},\Gamma_1)\leq \dfrac{d_{\arg}(v_1)}{4}$.  Similarly $\Delta_{\arg}(p_u,\Gamma_3)\leq \dfrac{d_{\arg}(v_1)}{4}$.   Item~\ref{const: rho_1 item 5.} in the choice of $\rho_1$ implies that for each $z\in\Gamma_2$, $|p_{\widehat{u_1}}(z)|\geq H(\langle\lambda_D\rangle_P)$.  Since $\arg(p_{\widehat{u_1}})$ is constant on $\Gamma_2$, Item~\ref{const: delta_1 item 7.} in the choice of $\delta_1$ and Item~\ref{const: delta_2 item 4.} in the choice of $\delta_2$, along with some elementary trigonometry, now show that $\Delta_{\arg}(p_{u_1},\Gamma_2)<\dfrac{d_{\arg}(v_1)}{4}$.  Therefore $\Delta_{\arg}(p_{u_1},\Gamma)<d_{\arg}(v_1)$, and therefore $\Gamma$ (that is, the projected version of $\Gamma$ which resides in $\widetilde{\lambda_{\widetilde{D}}}$) cannot traverse $\widetilde{\lambda_{\widetilde{D}}}$ from $\widetilde{x^{(1)}}$ to a distinct vertex of $\widetilde{\lambda_{\widetilde{D}}}$, so we conclude that $\widetilde{x^{(0)}}=\widetilde{x^{(1)}}$.

\begin{subcase}
$k_0\neq1$ and $x^{(1)}$ is not a vertex of $\lambda_D\setminus{E^{(1)}}$.
\end{subcase}

Assume during the following argument that $x^{(i)}$ is the initial point of $E^{(k_0)}$ (otherwise make the appropriate minor changes, such as reversing orientations of paths, etc.).  Let $\Delta>0$ denote change in argument along $E^{(k_0)}$ from $x^{(i)}$ to $x^{(1)}$.  Assume that $a(x^{(i)})=0$ (otherwise make the appropriate minor changes).  Let $\widehat{H}$ denote the edge of $\widehat{\lambda_D\setminus{E^{(1)}}}$ which contains $\widehat{x^{(0)}}$ (the point which takes the place of $x^{(1)}$ in $\widehat{\lambda_D\setminus E^{(1)}}$).  Recall that $\widehat{E^{(k_0)}}$ is the portion of $\widehat{H}$ with end points $\widehat{x^{(i)}}$ and $\widehat{x^{(0)}}$.  Let $\widehat{\gamma}$ be a parameterization of $\widehat{E^{(k_0)}}$ with respect to $\arg(p_{\widehat{u_1}})$.

Again by Item~\ref{const: rho_1 item 3.} in the choice of $\rho_1$, there is a path $\widetilde{\gamma}:[0,\Delta]\to\widetilde{\lambda_{\widetilde{D}}}$ such that $\widetilde{\gamma}(0)={u_1}^{(t_{i})}$ and for each $r\in[0,\Delta]$, $\arg(p_{u_1}(\widetilde{\gamma}(r)))=r$ and $|\widetilde{\gamma}(r)-\widehat{\gamma}(r)|\leq\delta_1$.  The argument for this sub-case is very similar to the argument for Case~\ref{case: find edge, x^{(1)} is not an end point of E^{(k_0)}.}.  The major difference is in the method by which we show that $\widetilde{\gamma}(\Delta)=\widetilde{{u_1}^{(t_1)}}$.  Our method here is similar to that by which we showed that $\widetilde{x^{(0)}}=\widetilde{x^{(1)}}$ in Sub-case~\ref{subcase:k_0neq1, x^{(1)} still a vertex.}.

Since the image of $\widetilde{\gamma}$ is contained in $\widetilde{\lambda_{\widetilde{D}}}$, we conclude that $|p_{u_1}(\widetilde{\gamma}(\Delta))|=|p_{u_1}(\widetilde{x^{(1)}})|$.  By definition of $\widetilde{\gamma}$, $\arg(p_{u_1}(\widetilde{\gamma}(\Delta)))=\Delta$, and by definition of $\Delta$, $\arg(p_{u_1}(\widetilde{x^{(1)}}))=a(\widetilde{x^{(1)}})=\Delta$ (since we are assuming that $\arg(p_{u_1}(\widetilde{x^{(i)}}))=0$).  Therefore $p_{u_1}(\widetilde{\gamma}(\Delta))=p_{u_1}(\widetilde{x^{(1)}})$.  Let $\Gamma_1$ denote the straight line path from $\widetilde{x^{(1)}}$ to $\widehat{x^{(1)}}$.  Let $\Gamma_2$ denote the portion of the gradient line of $p_{\widehat{u_1}}$ which connects $\widehat{x^{(1)}}$ to $\widehat{x^{(0)}}$.  Let $\Gamma_3$ denote the straight line path from $\widehat{x^{(0)}}$ (which equals $\widehat{\gamma}(\Delta)$) to $\widetilde{\gamma}(\Delta)$.  Let $\Gamma$ denote the path from $\widetilde{x^{(1)}}$ to $\widetilde{\gamma}(\Delta)$ obtained by concatenating the three paths $\Gamma_1$, $\Gamma_2$, and $\Gamma_3$.

Again, using a similar argument as that used in Sub-case~\ref{subcase:k_0neq1, x^{(1)} still a vertex.} to show that $\widetilde{x^{(0)}}=\widetilde{x^{(1)}}$, it now follows that ${u_1}^{(t_1)}=\widetilde{x^{(1)}}=\widetilde{\gamma}(\Delta)$.  The rest of the argument for this sub-case is essentially the same as for Case~\ref{case: find edge, x^{(1)} is not an end point of E^{(k_0)}.}, so we omit it.

The conclusion which we draw is as before.  Namely, there is a path $\widehat{\gamma}$ which parameterizes $\widehat{E^{(k_0)}}$ according to $\arg(p_{\widehat{u_1}})$ and an edge $\widetilde{E^{(k_0)}}$ of $\widetilde{\lambda_{\widetilde{D}}}$ from $\widetilde{x^{(i)}}$ to $\widetilde{x^{(1)}}$ with a parameterization  $\widetilde{\gamma}$ which parameterizes $\widetilde{E^{(k_0)}}$ according to $\arg(p_{u_1})$ such that for each $r$, $|\widehat{\gamma}(r)-\widetilde{\gamma}(r)|<\delta_2$.  Now for each $l\in\{1,\ldots,L\}$, let $\widetilde{\gamma^{(l)}}$ denote the path which parameterizes $\widetilde{E^{(l)}}$.  Let $\widehat{\gamma^{(l)}}$ be the path which parameterizes $\widehat{E^{(l)}}$.

\subsection{SHOW THAT THE ORDER OF THE EDGES AROUND $\widetilde{\lambda_{\widetilde{D}}}$ IS THE SAME AS THE ORDER OF THE CORRESPONDING EDGES AROUND $\lambda_D$}

We now wish to show that the edges $\widetilde{E^{(1)}},\ldots,\widetilde{E^{(L)}}$ appear in the same order around $\widetilde{\lambda_{\widetilde{D}}}$ as the order in which the edges to which they correspond appear around $\lambda_D$.  Recall that by Item~\ref{const: delta_1 item 3.} in the choice of $\delta_1$, there is a choice of points $z^{(1)},\ldots,z^{(L)}\in\widetilde{\lambda_{\widetilde{D}}}$ such that for each $l\in\{1,\ldots,L\}$, the following holds.

\begin{itemize}
\item
$z^{(l)}$ is in $\widetilde{E^{(l)}}$  but is not an end point of $\widetilde{E^{(l)}}$.

\item
$\arg(p_{u_1}(z^{(l)}))$ is more than $\frac{d_{\arg}(v_1)}{4}$ away from each of $\{\arg({v_1}^{(1)}),\ldots,\arg({v_1}^{(N-1)})\}$.

\item
$z^{(l)}$ is more than $2\delta_1$ away from each critical point of $p_{u_1}$.

\end{itemize}

For $l\in\{1,\ldots,L\}$, let $\Delta>0$ denote the change in $\arg(p_{u_1})$ along $\widetilde{E^{(1)}}$, and assume that $\arg(p_{u_1})=0$ at the initial point of $\widetilde{E^{(l)}}$ (otherwise make the appropriate minor changes in the following argument).  Let $\widetilde{\gamma^{(l)}},\widehat{\gamma^{(l)}}:[0,\Delta]\to\mathbb{C}$ be the paths which parameterize $\widetilde{E^{(l)}}$ and $\widehat{E^{(l)}}$ with respect to $\arg(p_{u_1})$ and $p_{\widehat{u_1}}$ respectively (as described in Sub-section~\ref{subsect: Choice of widetilde{E}.}).  For $r\in[0,\Delta]$ such that $z^{(l)}=\widetilde{\gamma^{(l)}}(r)$, define $\widehat{z^{(l)}}\colonequals\widehat{\gamma^{(l)}}(r)$, the point in $\widehat{E^{(l)}}$ which corresponds to $z^{(l)}$.

Now for each $l\in\{1,\ldots,L\}$, let $\sigma^{(l)}:[0,1]\to\mathbb{C}$ be a parameterization of the portion of the gradient line of $p_{u_1}$ which connects $z^{(l)}$ to a point in $\partial\widetilde{D}$.  Let $y^{(l)}$ denote this point in $\partial\widetilde{D}$.  Recall that by Item~\ref{const: delta_1 item 3.} in the choice of $\delta_1$, for any $j,k\in\{1,\ldots,L\}$ with $j\neq{k}$, and for any $s,t\in[0,1]$, $|\sigma^{(j)}(s)-\sigma^{(k)}(t)|>2\delta_1$, and there is no edge of a critical level curve of $p_{u_1}$ other than the edges which contain $\sigma^{(j)}(0)$ and $\sigma^{(j)}(1)$ within $2\delta_1$ of $\sigma^{(j)}(s)$.

Since the region in $\widetilde{D}$ exterior to $\widetilde{\lambda_{\widetilde{D}}}$ does not contain any critical points of $p_{u_1}$, the gradient lines of $p_{u_1}$ do not cross in this region, and therefore the order of appearance of the edges $\widetilde{E^{(l)}}$ of $\widetilde{\lambda_{\widetilde{D}}}$ around $\widetilde{\lambda_{\widetilde{D}}}$ is the same as the order in which the corresponding points $y^{(l)}$ appear around $\partial\widetilde{D}$.  Thus we show that these points have the desired order.

Define $i_1\colonequals1$ and choose distinct indices $i_2,\ldots,i_L\in\{2,\ldots,L\}$ so that the order in which the edges of $\widetilde{\lambda_{\widetilde{D}}}$ appear around $\widetilde{\lambda_{\widetilde{D}}}$ is $\widetilde{E^{(i_1)}},\ldots,\widetilde{E^{(i_L)}}$.  In order to show that the edges of $\widetilde{\lambda_{\widetilde{D}}}$ appear in the same order as their corresponding edges in $\lambda_D$, it suffices to show that for each $l\in\{1,\ldots,L\}$, $i_l=l$.  By Item~\ref{const: delta_1 item 4.} in the choice of $\delta_1$, $\widehat{\lambda_{\widehat{D}}}\subset\widetilde{D}$.  Now for each $l\in\{1,\ldots,L\}$, extend $\sigma^{(l)}$ by a straight line from $z^{(l)}$ to $\widehat{z^{(l)}}$.  Let $s>0$ be the smallest number in the domain of this extended path such that $\sigma^{(l)}(s)$ is in $\widehat{E^{(l)}}$, and define $\widehat{\sigma^{(l)}}$ to be the restriction of $\sigma^{(l)}$ to $[0,s]$.

By choice of $\widetilde{\gamma^{(l)}}$ and $\widehat{\gamma^{(l)}}$ (using, for example, Item~\ref{const: delta_1 item 3.} in the choice of $\delta_1$) $\widehat{\sigma^{(l)}}$ does not intersect $\widehat{\sigma^{(j)}}$ for any $j\neq{l}$, and similarly $\widehat{\sigma^{(l)}}$ does not intersect the gradient line which connects $\widehat{x^{(1)}}$ to $\widehat{x^{(0)}}$.  Therefore the points $y^{(i_2)},\ldots,y^{(i_L)}$ (the end points of $\widehat{\sigma^{(i_2)}},\ldots,\widehat{\sigma^{(i_L)}}$ in $\partial\widetilde{D}$) appear in the same order as the corresponding edges $\widehat{E^{(i_2)}},\ldots,\widehat{E^{(i_L)}}$ appear in $\widehat{\lambda_{\widehat{D}}\setminus E^{(1)}}$.  By construction of $\langle\widehat{\Lambda}\rangle_{PC}$, that order is exactly $\widehat{E^{(2)}},\ldots,\widehat{E^{(L)}}$, so we conclude that for each $l\in\{1,\ldots,L\}$, $i_l=l$.

Note also that by the recursive assumption, $z(D)$ (which equals $Z(\lambda_D)$, which in turn equals the sum of the values of $z(\cdot)$ on each bounded face of $\lambda_D$) is equal to $z(\widetilde{D})$ (which similarly equals the sum of the values of $z(\cdot)$ on each bounded face of $\widetilde{\lambda_{\widetilde{D}}}$).  Our work in Sub-section~\ref{subsect: Choice of widetilde{E}.} implies that for each bounded face $F$ of $\lambda_D$, $z(F)=z(\widetilde{F})$.  Therefore we conclude that $\widetilde{\lambda_{\widetilde{D}}}$ has no bounded faces other than those corresponding to bounded faces of $\lambda_D$, and it in turn follows that $\widetilde{\lambda_{\widetilde{D}}}$ has no edges other than those corresponding to the edges in $\lambda_D$.  We conclude finally that $\langle\widetilde{\lambda_{\widetilde{D}}}\rangle_P=\langle\lambda_D\rangle_P$.

\subsection{SHOW RESPECT FOR GRADIENT MAPS}

We now wish to show that the correspondence established between $\lambda_D$ and $\widetilde{\lambda_{\widetilde{D}}}$ respects the gradient maps.  That is, if $y$ is a distinguished point in $\partial D$, and $\widetilde{y}$ is the corresponding distinguished point in $\partial\widetilde{D}$, then $g_{\widetilde{D}}(\widetilde{y})$ is the distinguished point in $\widetilde{\lambda_{\widetilde{D}}}$ which corresponds to the distinguished point $g_D(y)$ in $\lambda_D$.

As before, let $\langle\lambda\rangle_{PC}$ be a member of $PC$ used to construct $\langle\Lambda\rangle_{PC}$.  Let $D$ be a bounded face of $\lambda$, and let $\langle\widehat{\lambda}\rangle_{PC}$, $\widehat{D}$, and $\langle\widehat{\lambda_{\widehat{D}}}\rangle_{PC}$, and $\langle\widetilde{\lambda}\rangle_{PC}$, $\widetilde{D}$, and $\langle\widetilde{\lambda_{\widetilde{D}}}\rangle_{PC}$, be the objects for $\langle\widehat{\Lambda}\rangle_{PC}$ and $\langle\widetilde{\Lambda}\rangle_{PC}$ which correspond to $\langle\lambda\rangle_{PC}$, $D$, and $\langle\lambda_D\rangle_{PC}$ respectively.  Choose some edge $E_1$ of $\lambda$ in $\partial{D}$ and let $\widehat{E_1}$ and $\widetilde{E_1}$ be the corresponding edges in $\partial\widehat{D}$ and $\partial\widetilde{D}$.  Let $x^{(i_1)}$ and $x^{(i_2)}$ be the initial and final points of $E_1$, and let $\widehat{x^{(i_1)}},\widehat{x^{(i_2)}}$ and $\widetilde{x^{(i_1)}},\widetilde{x^{(i_2)}}$ be the corresponding distinguished points for $\langle\widehat{\Lambda}\rangle_{PC}$ and $\langle\widetilde{\Lambda}\rangle_{PC}$ respectively.  Let $\Delta_1$ denote the change in argument along $E_1$.  Assume that $a(x^{(i_1)})=0$, (otherwise make the appropriate minor changes).  Then let $\widehat{\gamma^{(1)}}:[0,\Delta_1]\to\mathbb{C}$ be the path which parameterizes $\widehat{E_1}$ according to $\arg(p_{\widehat{u_1}})$.  Let $\widetilde{\gamma^{(1)}}:[0,\Delta_1]\to\mathbb{C}$ be the path which parameterizes $\widetilde{E_1}$ according to $\arg(p_{\widetilde{u_1}})$.  Let $y$ be a distinguished point in $E_1$.  Let $\widehat{y}$ and $\widetilde{y}$ be the corresponding points in $\widehat{E_1}$ and $\widetilde{E_1}$.  Then by choice of $\widehat{\gamma^{(1)}}$ and $\widetilde{\gamma^{(1)}}$, $|\widehat{y}-\widetilde{y}|<\delta_2$.  Define $z$ to be the distinguished point in $\lambda_D$ such that $g_D(y)=z$.  Let $\widehat{z}$ and $\widetilde{z}$ be the distinguished points corresponding to $z$ for $\langle\widehat{\Lambda}\rangle_{PC}$ and $\langle\widetilde{\Lambda}\rangle_{PC}$.  Then since $g_D(y)=z$, the goal is to show that $g_{\widetilde{D}}(\widetilde{y})=\widetilde{z}$.  Let $E_2$ denote one of the edges of $\lambda_D$ which contains $z$ (if $z$ is a vertex of $\lambda_D$ then it will be contained in more than one edge of $\lambda_D$).  Let $x^{(j_1)}$ and $x^{(j_2)}$ be the initial and final points of $E_2$.  Let $\Delta_2$ denote the change in argument along $E_2$.  Let $\widehat{\gamma^{(2)}},\widetilde{\gamma^{(2)}}:[a(x^{(j_1)}),a(x^{(j_1)})+\Delta_2]\to\mathbb{C}$ be the paths which parameterize $\widehat{E_2}$ and $\widetilde{E_2}$ with respect to $\arg(p_{\widehat{u_1}})$ and $\arg(p_{u_1})$ respectively.  Then by choice of $\widehat{\gamma^{(2)}}$ and $\widetilde{\gamma^{(2)}}$, $|\widehat{z}-\widetilde{z}|<\delta_2$.  We will show the desired result in the case where $\langle\widehat{\lambda_{\widehat{D}}}\rangle_{PC}$ was formed using the scattering method.  The other cases are just simpler versions of this case.

Recall that $\widehat{D^{(1)}}$ denotes the face of $\widehat{\lambda_{\widehat{D}}}$ to which $\langle\xi_{F^{(1)}}\rangle_{PC}$ was assigned in the construction of $\langle\widehat{\Lambda}\rangle_{PC}$, and $\widehat{D^{(2)}}$ denotes the other face of $\widehat{\lambda_{\widehat{D}}}$.  We now will define a path $\widehat{\sigma}$ from $\widehat{y}$ to $\widehat{z}$.

\begin{case}
$z\in\partial{F^{(1)}}$.
\end{case}

In this case $\widehat{z}$ is a distinguished point in $\partial\widehat{F^{(1)}}$.  By definition of $\widehat{y}$ and $\widehat{z}$, $g_{\widehat{D}}(\widehat{y})=\widehat{z}$.  Therefore there is a portion of a gradient line $\widehat{\sigma}:[0,1]\to\mathbb{C}$ of $p_{\widehat{u_1}}$ which connects $\widehat{y}$ and $\widehat{z}$ and such that $\widehat{\sigma}((0,1))$ is contained in the portion of $\widehat{D}$ which is exterior to $\widehat{\lambda_{\widehat{D}}}$.

\begin{case}
$z\notin\partial{F^{(1)}}$.
\end{case}

In this case by the definition of the correspondence already established, $\widehat{z}$ is a point in an edge of $\widehat{\lambda_D\setminus{E^{(1)}}}$.  Recall that $\langle\widehat{\lambda_D\setminus{E^{(1)}}}\rangle_{PC}$ has been assigned to $\widehat{D^{(2)}}$ during the construction of $\langle\widehat{\lambda_{\widehat{D}}}\rangle_{PC}$, and by this construction, $g_{\widehat{D}}(\widehat{y})$ is a point in $\partial\widehat{D^{(2)}}$, and by the construction of $\langle\widehat{\Lambda}\rangle_{PC}$, $g_{\widehat{D^{(2)}}}(g_{\widehat{D}}(\widehat{y}))=\widehat{z}$.  Therefore there is a portion of a gradient line $\widehat{\sigma_1}:[0,1]\to\mathbb{C}$ of $p_{\widehat{u_1}}$ which connects $\widehat{y}$ to $g_{\widehat{D}}(\widehat{y})$, and another portion of a gradient line $\widehat{\sigma_2}:[0,1]\to\mathbb{C}$ of $p_{\widehat{u_1}}$ which connects $g_{\widehat{D}}(\widehat{y})$ to $\widehat{z}$.  Let $\widehat{\sigma}$ denote the concatenation of these two paths.

Therefore we have the desired path $\widehat{\sigma}$.  By Item~\ref{const: delta_2 item 3.} in the choice of $\delta_2$ and Item~\ref{const: rho_1 item 2.} in the choice of $\rho_1$, we conclude that there is a path $\widetilde{\sigma}:[0,1]\to\mathbb{C}$ such that $\widetilde{\sigma}(0)=\widetilde{y}$ and $\widetilde{\sigma}(1)=\widetilde{z}$ and, for all $r\in[0,1]$, $\arg(p_u(\widetilde{\sigma}(r)))=0$ and $|\widetilde{\sigma}(r)-\widehat{\sigma}(r)|<\delta_1$.  Moreover, since $|p_{\widehat{u_1}}|$ is strictly decreasing on $\widehat{\sigma}$, we may assume that $|p_{u_1}|$ is strictly decreasing on $\widetilde{\sigma}$.  Therefore for each $r\in(0,1)$, $|p_{u_1}(\widetilde{\sigma}(r))|\in(|p_{u_1}(\widetilde{z})|,|p_{u_1}(\widetilde{y})|)$.  Therefore also for each $r\in(0,1)$, $\widetilde{\sigma}(r)$ is in the portion of $\widetilde{D}$ which is in the unbounded face of $\widetilde{\lambda_{\widetilde{D}}}$.  By definition of $g_{\widetilde{D}}$, we conclude that $g_{\widetilde{D}}(\widetilde{y})=\widetilde{z}$.

Thus the correspondence established above between the graphs, vertices, and distinguished points of $\langle\Lambda\rangle_{PC}$ and those of $\langle\widetilde{\Lambda}\rangle_{PC}$ respects the gradient maps of $\langle\Lambda\rangle_{PC}$ and $\langle\widetilde{\Lambda}\rangle_{PC}$.

This concludes the recursive step for our proof.  We conclude that $\langle\Lambda\rangle_{PC}$ and $\langle\widetilde{\Lambda}\rangle_{PC}$ share all auxiliary data, and are thus equal.  That is, $\langle\Lambda\rangle_{PC}=\Pi(p_{u_1},G_{p_{u_1}})$.

\end{proof}

\subsection{COROLLARY AND EXAMPLE}

By inspecting the proof of Theorem~\ref{thm: Pi:H_a to PC_a is bijective.} just shown, we immediately have the following corollary.

\begin{corollary}\label{cor: Polynomial equivalence.}
For any $(f,G)\in{H_a}$ there is a polynomial $(p,G_p)$ such that $(p,G_p)\sim(f,G)$.
\end{corollary}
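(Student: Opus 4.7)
The plan is to read Corollary~\ref{cor: Polynomial equivalence.} as a direct consequence of Theorem~\ref{thm: Pi:H_a to PC_a is bijective.} together with the injectivity already established in Theorem~\ref{thm:Conformal equivalence iff Pi equivalence.}. Given $(f,G)\in H_a$, set $\langle\Lambda\rangle_{PC}\colonequals\Pi(f,G)$. Since $\Pi(H_a)\subset PC_a$, we have $\langle\Lambda\rangle_{PC}\in PC_a$.

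Now I would invoke surjectivity. The proof of Theorem~\ref{thm: Pi:H_a to PC_a is bijective.} does not merely produce an analytic generalized finite Blaschke product mapping to $\langle\Lambda\rangle_{PC}$; at every inductive step (including the base case given by Lemma~\ref{lemma: Partial surjectivity.}) the preimage is constructed as a normalized polynomial $p_{u_1}\in\mathbb{C}[z]$ with critical-point data $u_1\in\Theta^{-1}(v_1)$, and the representative of the $\sim$-class is taken to be $(p_{u_1},G_{p_{u_1}})\in H_p'$. Thus there is a polynomial $p\in\mathbb{C}[z]$ with $\Pi(p,G_p)=\langle\Lambda\rangle_{PC}=\Pi(f,G)$.

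Finally I would apply the backwards implication of Theorem~\ref{thm:Conformal equivalence iff Pi equivalence.}: since $\Pi(p,G_p)=\Pi(f,G)$, the two generalized finite Blaschke ratios are conformally equivalent, i.e.\ there is a conformal map $\phi:G_p\to G$ with $p\equiv f\circ\phi$ on $G_p$ (equivalently, taking $\phi^{-1}$, we get $(p,G_p)\sim(f,G)$ as required). No new step is needed beyond quoting these two theorems, which is why the corollary is framed as an immediate consequence; the genuine work is entirely concentrated in the proof of Theorem~\ref{thm: Pi:H_a to PC_a is bijective.} and there is no further obstacle for the corollary itself.
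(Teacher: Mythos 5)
Your proposal is correct and is essentially the paper's own argument: extract from the proof of Theorem~\ref{thm: Pi:H_a to PC_a is bijective.} that the preimage of $\Pi(f,G)$ is realized by a polynomial $(p,G_p)$, then apply the backward implication of Theorem~\ref{thm:Conformal equivalence iff Pi equivalence.} to conclude $(p,G_p)\sim(f,G)$. Your observation that the surjectivity proof produces specifically polynomial representatives (not just members of $H_a$) is exactly the point the paper relies on.
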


\begin{proof}
As shown in the proof of Theorem~\ref{thm: Pi:H_a to PC_a is bijective.}, we may find a polynomial $p$ such that $\Pi(p,G_p)=\Pi(f,G)$.  By Theorem~\ref{thm:Conformal equivalence iff Pi equivalence.}, it immediately follows that $(p,G_p)\sim(f,G)$.
\end{proof}

An example is in order here.  Unfortunately it may be very difficult in general either to determine the critical level curve configuration of a given generalized finite Blaschke product, or to find a polynomial with a given critical level curve configuration.  Therefore our example is quite simple.

\begin{example}
Consider the function

\[
f(z)=\frac{1}{.6}\left(z^2+\frac{9}{25}\right)e^z.
\]

The shaded region $G$ in Figure~\ref{fig:(f,G).} is one of the components of the set $\{w:|f(w)|<1\}$.  The critical point of $f$ in $G$ is at $z=-.2$ and the corresponding critical value is non-zero, so the vector $v=(f(-.2))$ is in $U_1$.  Therefore by Corollary~\ref{cor: Polynomial equivalence.} there is some polynomial $p$ such that $(p,G_p)\sim(f,G)$.  Consider, for example, the polynomial

\[
p(z)=z^2+f(-.2).
\]

The shaded region in Figure~\ref{fig:(p,D).} is the set $G_p=\{w:|p(w)|<1\}$.  The critical point of $p$ is at $z=0$.  It is easy to see that the critical value which arises from the critical point of $f$ in $G$ is equal to the critical value of $p$.  Since there is only one member of $PC_a$ which has a given single critical value and no other critical values, it follows that $\Pi(f,G)=\Pi(p,G_p)$.  Therefore by Theorem~\ref{thm:Conformal equivalence iff Pi equivalence.}, $(f,G)\sim(p,G_p)$.  That is, there is some conformal function $\phi:G\to{G_p}$ such that $f\equiv{p}\circ\phi$ on $G$.

\begin{figure}[htpb]
\begin{minipage}[b]{0.45\linewidth}
\centering
	\includegraphics[width=\textwidth]{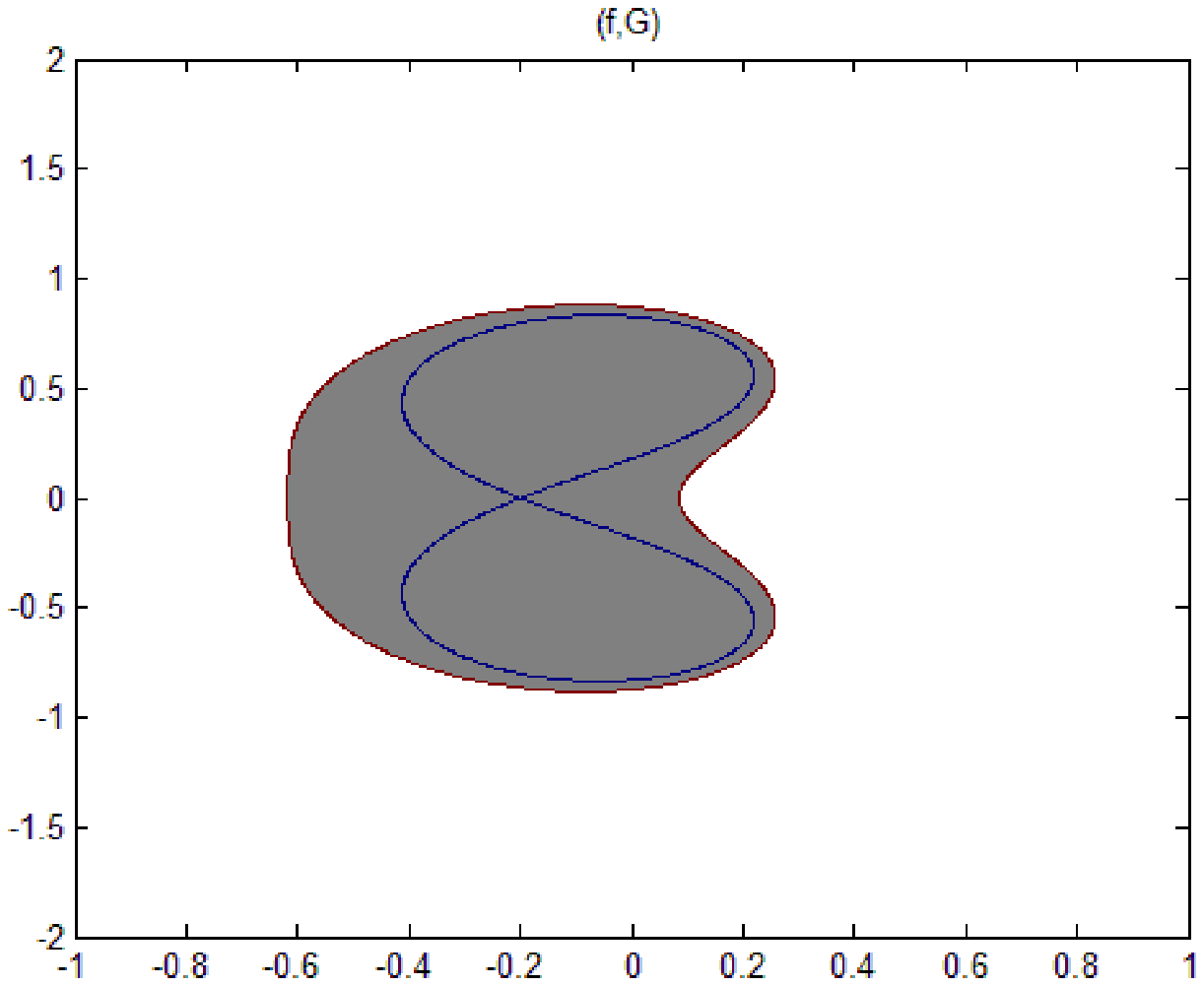}
  \caption{Tract of $f$}
	\label{fig:(f,G).}
\end{minipage}
\hspace{0.5cm}
\begin{minipage}[b]{0.45\linewidth}
\centering
	\includegraphics[width=\textwidth]{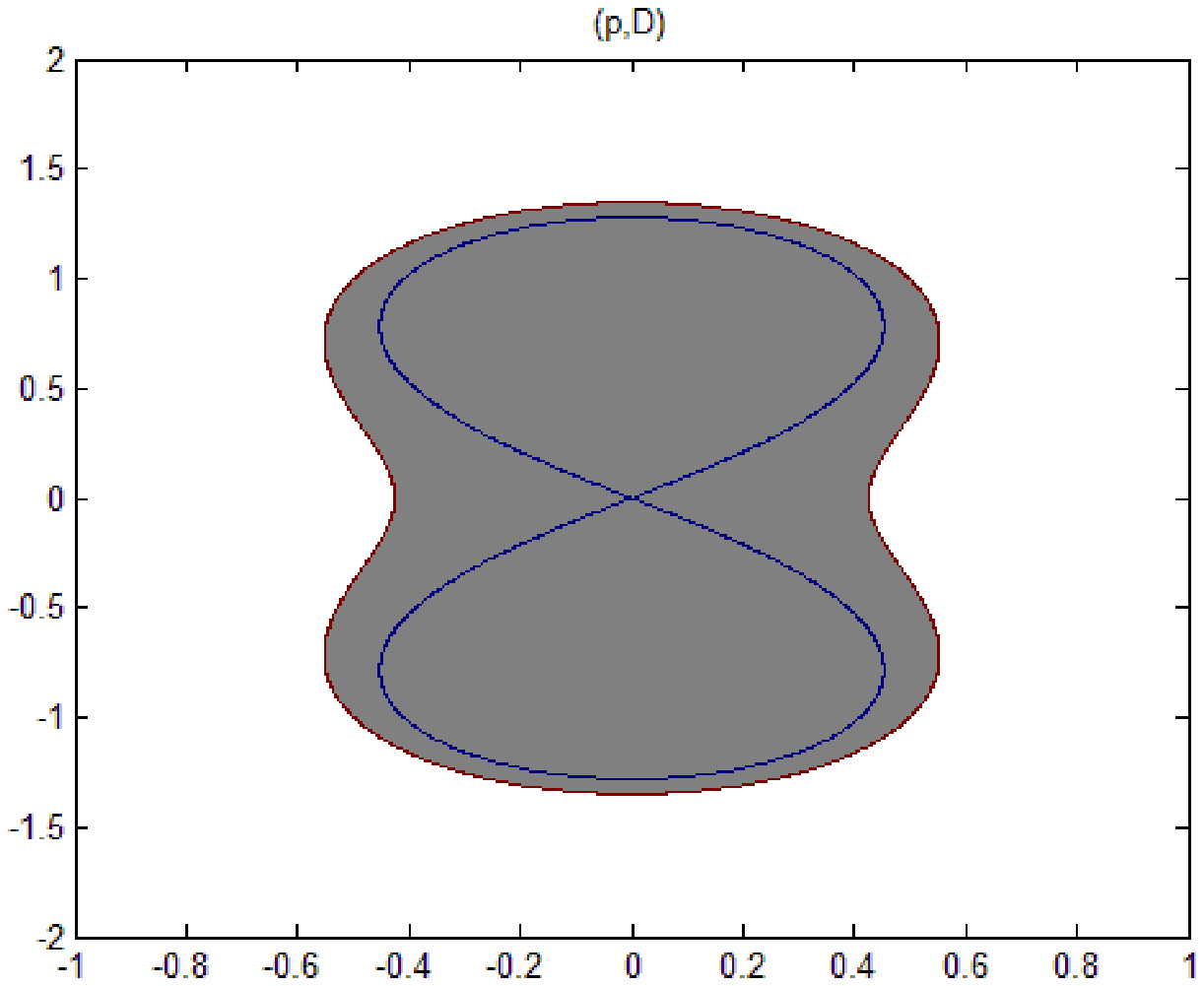}
	\caption{Tract of $p$}
	\label{fig:(p,D).}
\end{minipage}
\end{figure}
\end{example}
\section{DIRECTIONS FOR FUTURE WORK}%

Since Theorem~\ref{thm:Conformal equivalence iff Pi equivalence.} applies to generalized finite Blaschke ratios, not just generalized finite Blaschke products, it is a natural goal to extend Theorem~\ref{thm: Pi:H_a to PC_a is bijective.} to all generalized finite Blaschke ratios (namely, to show that $\Pi:H\to PC$ is a bijection).  This seems likely, but the problem of counting the possible critical level curve configurations of generalized finite Blaschke ratios with a given list of critical values is much more complicated than the similar process for generalized finite Blaschke products (which was one of the major steps in our proof of Theorem~\ref{thm: Pi:H_a to PC_a is bijective.}).

More promising is the extension of Corollary~\ref{cor: Polynomial equivalence.} to analytic functions on compact sets.  That is, in future work we intend to use level sets to show that if $f$ is any function analytic on the closure of $\mathbb{D}$, there is a polynomial $p$ and an injective map $\phi:\mathbb{D}\to G_p$ such that $f\equiv p\circ\phi$ on $\mathbb{D}$.

\appendix %

\section{SEVERAL LEMMATA}\label{app:Several Lemmata.}

\begin{lemma}\label{lemma:Analytic graphs have face with one edge boundary.}
Let $\lambda$ be a finite connected graph embedded in the plane.  Suppose that $\lambda$ has the following properties.

\begin{itemize}
\item
Each vertex of $\lambda$ is incident to more than one bounded face of $\lambda$.

\item
Each edge of $\lambda$ is incident to both a bounded face of $\lambda$ and the unbounded face of $\lambda$.
\end{itemize}

Then some bounded face of $\lambda$ has a single edge of $\lambda$ as its boundary.
\end{lemma}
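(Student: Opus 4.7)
The plan is to argue by contradiction using Euler's formula together with a double-counting argument on vertex-face incidences. Suppose no bounded face of $\lambda$ has a single edge of $\lambda$ as its boundary; then every bounded face has at least two distinct edges on its boundary.

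The first step is to exploit the edge hypothesis (each edge is incident to both a bounded face and the unbounded face). Since every edge has exactly two sides, each lying in exactly one face, this hypothesis forces each edge to contribute exactly one side to the total edge-count of the bounded faces. Thus, writing $|\partial F|$ for the number of edges in the boundary walk of $F$ counted with multiplicity, I obtain $\sum_{F \text{ bounded}} |\partial F| = E$. (Note that the edge hypothesis also rules out bridges, since both sides of a bridge lie in the same face, so no edge appears twice in any boundary walk.) The contradiction hypothesis now gives $|\partial F| \geq 2$ for each bounded face, so $2B \leq E$, where $B$ is the number of bounded faces. Feeding this into Euler's formula $V - E + (B+1) = 2$, i.e. $B = E - V + 1$, yields the inequality
\[
E \leq 2V - 2.
\]

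The second step is to extract the reverse inequality from the vertex hypothesis (each vertex is incident to more than one bounded face). Summing over vertices gives $\sum_v \#\{F \text{ bounded} : v \in \overline{F}\} \geq 2V$, and double-counting transforms this into $\sum_{F \text{ bounded}} \#\{v : v \in \overline{F}\} \geq 2V$. For each bounded face $F$, the boundary walk has exactly $|\partial F|$ vertex positions, so the number of distinct vertices on $\partial F$ is at most $|\partial F|$. Hence
\[
2V \leq \sum_{F \text{ bounded}} |\partial F| = E,
\]
contradicting the bound $E \leq 2V - 2$ obtained in the first step.

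The main obstacle, such as it is, is simply to pin down the correct notion of boundary-edge multiplicity and to verify that the two counting identities $\sum_{F \text{ bounded}} |\partial F| = E$ and $\#\{v \in \overline{F}\} \leq |\partial F|$ both hold unambiguously; the edge hypothesis is precisely what makes this bookkeeping clean by eliminating bridges and forcing each bounded face to "see" each of its boundary edges exactly once.
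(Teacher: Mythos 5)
Your proof is correct, but it follows a genuinely different route from the paper. The paper builds an auxiliary incidence graph $\mathcal{T}$ with one node per bounded face joined to the vertices of $\lambda$ on its boundary, uses the edge hypothesis to show $\mathcal{T}$ is connected and acyclic (hence a tree), uses the vertex hypothesis to show every leaf of $\mathcal{T}$ is a face node, and reads off the conclusion from a leaf. You instead argue by contradiction with Euler's formula and two incidence counts: the edge hypothesis forces each edge to have exactly one side on a bounded face (and rules out bridges, so boundary walks are clean), giving $\sum_{F\ \mathrm{bounded}}|\partial F|=E$; the assumed absence of single-edge faces gives $2B\le E$, which with $B=E-V+1$ yields $E\le 2V-2$; and the vertex hypothesis, via vertex--face double counting and the bound $\#\{v\in\overline{F}\}\le|\partial F|$ (valid because each boundary edge is traversed once in the boundary walk), yields $2V\le E$, a contradiction. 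Both arguments use the two hypotheses in parallel roles, and both are short; the paper's tree argument is more structural and directly exhibits the face (and, since a tree with at least two vertices has at least two leaves, actually gives two such faces), while your counting argument is more mechanical to verify and also strengthens with a one-line tweak to produce at least two single-edge faces. The one point worth stating explicitly if this were written up is the justification that every vertex in $\overline{F}$ lies on some edge of $\partial F$ (so that it is visited by the boundary walk); this is standard for plane graphs without isolated vertices, and you have implicitly handled the multigraph issues (loops, multiple edges) that do occur for level-curve graphs, since Euler's formula and the incidence counts are insensitive to them.
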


\begin{proof}
Construct an auxiliary graph $\mathcal{T}$ from $\lambda$ as follows.  Start with $\lambda$ and place a vertex in each bounded face of $\lambda$.  For each bounded face $F$ of $\lambda$, draw an edge from the $F$-vertex to each vertex of $\lambda$ in $\partial F$.  Deleting all the original edges of $\lambda$, we let $\mathcal{T}$ denote the remaining graph, which is connected by the assumption that each edge of $\lambda$ is incident to both the unbounded face of $\lambda$ and a bounded face of $\lambda$.  Any cycle in $\mathcal{T}$ would contradict the same assumption on the edges of $\lambda$, so $\mathcal{T}$ is a tree.  Finally, the assumption that each vertex is incident to more than one bounded face of $\lambda$ implies that each leaf of $\mathcal{T}$ arises from a bounded face of $\lambda$.  Finally, if $v$ is a leaf of $\mathcal{T}$, the corresponding face of $\lambda$ can have only one edge of $\lambda$ in its boundary.
\end{proof}

\begin{lemma}\label{lemma:If hat{v} is close to v, then any hat{u} is close to some u.}
Let $v\in\mathbb{C}^{n-1}$and $\rho>0$  be given.  Then there exists a $\nu>0$ such that if $\widehat{v}\in\mathbb{C}^{n-1}$ and $|v-\widehat{v}|<\nu$, and $\widehat{u}\in\Theta^{-1}(\widehat{v})$, then there is a $u\in\Theta^{-1}(v)$ such that $|u-\widehat{u}|<\rho$.
\end{lemma}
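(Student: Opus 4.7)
The plan is to prove this lemma as a standard upper semi-continuity statement for the polynomial map $\Theta$, by a compactness argument that exploits continuity of $\Theta$ together with the finiteness and properness properties established for it in~\cite{BCN}.

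I would argue by contradiction. Suppose no such $\nu>0$ exists. Then for each positive integer $k$, I can find $\widehat{v}_k\in\mathbb{C}^{n-1}$ with $|v-\widehat{v}_k|<1/k$ and $\widehat{u}_k\in\Theta^{-1}(\widehat{v}_k)$ such that $|\widehat{u}_k-u|\geq\rho$ for every $u\in\Theta^{-1}(v)$. In particular $\widehat{v}_k\to v$, and my goal is to derive a contradiction by producing a limit point of $\{\widehat{u}_k\}$ inside $\Theta^{-1}(v)$.

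The crux of the argument is therefore showing that $\{\widehat{u}_k\}$ is bounded in $\mathbb{C}^{n-1}$. This is the properness statement for $\Theta$: the preimage of the bounded set $\{\widehat{v}_k:k\geq 1\}\cup\{v\}$ must be bounded. I would invoke~\cite{BCN} here, where the normalization defining $p_u$ is chosen precisely so that $\Theta$ is a finite polynomial map of $\mathbb{C}^{n-1}$ to itself; alternatively, one sees directly from $p_u'(z)=n\prod_j(z-u^{(j)})$ together with the fixed constant-term normalization that if some coordinate $\widehat{u}_k^{(i)}\to\infty$ (while the others remain bounded, or all escape together), integration of $p_u'$ along a ray forces at least one of the critical values $p_{\widehat{u}_k}(\widehat{u}_k^{(j)})$ to blow up, contradicting $\widehat{v}_k\to v$. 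Once boundedness is in hand, I pass to a convergent subsequence $\widehat{u}_{k_j}\to u^*\in\mathbb{C}^{n-1}$. Since $\Theta$ is continuous,
\[
\Theta(u^*)=\lim_{j\to\infty}\Theta(\widehat{u}_{k_j})=\lim_{j\to\infty}\widehat{v}_{k_j}=v,
\]
so $u^*\in\Theta^{-1}(v)$. But then $|\widehat{u}_{k_j}-u^*|\to 0$, directly contradicting $|\widehat{u}_k-u^*|\geq\rho$. This finishes the proof, and the $\nu$ produced by contradiction can then be made explicit (if desired) by covering the finite set $\Theta^{-1}(v)$ with small balls and running a continuity estimate on each.

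The main obstacle is the boundedness step: the abstract conclusion is trivial by continuity alone once one knows $\{\widehat{u}_k\}$ is bounded, but verifying that one cannot have an escape sequence $|\widehat{u}_k|\to\infty$ with bounded image under $\Theta$ is exactly the nontrivial properness content. For the actual write-up I would simply cite the relevant properness statement from~\cite{BCN} (where the counting result $|\Theta^{-1}(v)|=n^{n-3}$ is obtained by Bezout and therefore implicitly requires $\Theta$ to be a finite, hence proper, polynomial map), rather than reprove it from scratch.
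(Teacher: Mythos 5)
Your proposal is correct and follows essentially the same route as the paper: a contradiction argument extracting a convergent subsequence from $\Theta^{-1}$ of the compact set $\{\widehat{v}_k\}\cup\{v\}$, using properness of $\Theta$ (cited from~\cite{BCN}) together with its continuity. The paper simply asserts properness and proceeds exactly as you do, so your extra discussion of how one might verify properness directly is fine but not needed.
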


\begin{proof}
Suppose by way of contradiction that the desired result fails.  Thus there exists a sequence $\{v_k\}_{k=1}^{\infty}\subset\mathbb{C}^{n-1}$ such that $v_k\to v$, and for each $k$ we may choose a $u_k\in\Theta^{-1}(v_k)$ such that $|u_k-u|>\rho$ for each $u\in\Theta^{-1}(v)$.

Define $K\colonequals\{v_k\}_{k=1}^\infty\cup\{v\}$.  $K$ is compact, and $\Theta$ is proper, so $\Theta^{-1}(K)$ is compact.  $\{u_k\}_{k=1}^\infty\subset\Theta^{-1}(K)$, so there is a subsequence $\{u_{k_l}\}_{l=1}^\infty$ which converges to some point $u$.  Since $\Theta$ is continuous,

\[
\Theta(u)=\Theta(\lim_{l\to\infty}u_{k_l})=\lim_{l\to\infty}\Theta(u_{k_l})=v.
\]

Thus $\{u_{k_l}\}_{l=1}^\infty$ converges to a point in $\Theta^{-1}(v)$, which is a contradiction of the choice of $\{u_k\}_{k=1}^\infty$.
\end{proof}

\begin{definition}
If $\gamma:[\alpha,\beta]\to\mathbb{C}$  is a path, and $f$ is a function which is analytic and non-zero on the image of $\gamma$, then we say that $\gamma$ is parameterized according to $\arg(f)$ if for each $r\in[\alpha,\beta]$, $\arg(f(\gamma(r)))=r$.
\end{definition}

\begin{lemma}\label{lemma:u and hat{u} close implies p_u and p_{hat{u}} close.}
Let $v\in{V_{n-1}}$, and $\tau>0$ and be given.  Then there exists a $\rho>0$ such that if $u\in\Theta^{-1}(v)$, and $\widehat{u}\in\Theta^{-1}(V_{n-1})$ such that $|u-\widehat{u}|<\rho$, then the following holds.  $G_{p_{\widehat{u}},1}\subset{G_{p_u,2}}$, and $|p_u(z)-p_{\widehat{u}}(z')|<\tau$ for all $z,z'\in{G_{p_u,2}}$ satisfying $|z-z'|<\rho$.
\end{lemma}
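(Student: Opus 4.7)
The plan is to reduce everything to a compactness plus uniform-continuity argument, exploiting the fact that the map $u \mapsto p_u$ sends critical points to polynomial coefficients continuously, and that $\Theta^{-1}(v)$ is finite (by the result of Beardon, Carne, and Ng already cited in the paper). Throughout, I read $G_{f,c}$ as $\{z : |f(z)| < c\}$, so that $G_{p_u,2}$ is a bounded open set (since $p_u$ is a polynomial and therefore $|p_u(z)| \to \infty$ as $z \to \infty$).

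First I would fix $u \in \Theta^{-1}(v)$ and handle the inclusion $G_{p_{\widehat{u}},1} \subset G_{p_u,2}$. Since $p_u$ is a polynomial of degree $n$, there exists $R > 0$ with $|p_u(z)| > 3$ for all $|z| \geq R$. Because the coefficients of $p_u$ depend continuously on $u$, the polynomial $p_{\widehat{u}}$ converges to $p_u$ uniformly on $\overline{B(0,R)}$ as $\widehat{u} \to u$, and the uniform estimate on the complement of $B(0,R)$ (which I would argue using the leading-coefficient term and continuity of the lower-order coefficients in $\widehat{u}$) gives $|p_{\widehat{u}}(z)| > 2$ for $|z| \geq R$ provided $\widehat{u}$ is close enough to $u$. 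So $G_{p_{\widehat{u}},1} \subset B(0,R)$. Then, shrinking the tolerance further, I can force $|p_u(z) - p_{\widehat{u}}(z)| < 1$ for all $z \in \overline{B(0,R)}$, which immediately yields $|p_u(z)| < 2$ whenever $|p_{\widehat{u}}(z)| < 1$.

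Next, for the estimate $|p_u(z) - p_{\widehat{u}}(z')| < \tau$, I would work on the compact set $\overline{G_{p_u,2}}$. On this compact set, $p_u$ is uniformly continuous, so there exists $\rho' > 0$ with $|z - z'| < \rho' \Rightarrow |p_u(z) - p_u(z')| < \tau/2$. Again by continuous dependence of coefficients on critical points, I may choose $\rho'' > 0$ so that $|\widehat{u} - u| < \rho''$ forces $\|p_u - p_{\widehat{u}}\|_{\infty,\overline{G_{p_u,2}}} < \tau/2$. Taking $\rho = \min(\rho',\rho'')$ (and also small enough for the first part of the argument), the triangle inequality $|p_u(z) - p_{\widehat{u}}(z')| \leq |p_u(z) - p_u(z')| + |p_u(z') - p_{\widehat{u}}(z')|$ gives the conclusion. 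Finally, since $\Theta^{-1}(v)$ is finite, I take the minimum of the $\rho$'s obtained for each $u \in \Theta^{-1}(v)$ to get a single $\rho$ that works uniformly.

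The only mildly delicate point is the uniform large-$z$ control of $p_{\widehat{u}}$ used in the first step: I need to know that $G_{p_{\widehat{u}},1}$ does not escape to infinity as $\widehat{u}$ varies in a small ball around $u$. This is handled by noting that $p_u(z) = z^n + (\text{lower order})$ after the normalization used in \cite{BCN}, so the leading-coefficient asymptotics are uniform in $\widehat{u}$ and the lower-order coefficients vary continuously and hence boundedly in a neighborhood of $u$; the rest is standard polynomial estimates. Everything else is routine compactness and uniform continuity.
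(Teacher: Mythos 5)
Your argument is correct and follows exactly the route the paper intends: the paper's entire proof is the one-line observation that the coefficients of $p_u$ are polynomials in the components of $u$, and your write-up simply fills in the routine consequences of that fact (uniform convergence of $p_{\widehat{u}}$ to $p_u$ on compact sets, uniform large-$|z|$ lower bounds from the fixed leading term, uniform continuity on $\mathrm{cl}(G_{p_u,2})$, and a minimum over the finite set $\Theta^{-1}(v)$). No gap; the elaboration, including the interpretation of $G_{f,c}$ as $\{z:|f(z)|<c\}$, matches what the paper leaves implicit.
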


\begin{proof}
This follows from the fact that the coefficients of $p_u$ are polynomials in the components of $u$.
\end{proof}

\begin{lemma}\label{lemma:Points close to x have path to x in lambda.}
Let $v\in{V_{n-1}}$, and $\delta^{(1)}>0$ be given.  There exists some $\delta^{(2)}\in(0,\delta^{(1)})$ such that if $u\in\Theta^{-1}(v)$, and $\lambda$ is a critical level curve of $(p_u,G_{p_u})$ (with $|f|\equiv\epsilon>0$ on $\lambda$), and $x\in\lambda$ is a critical point of $p_u$, then if $y\in{B_{\delta^{(2)}}(x)}$ satisfies $|f(y)|=\epsilon$, then there is a path $\sigma$ from $y$ to $x$ which is contained in $\lambda\cap{B_{\delta^{(1)}}(x)}$.  Moreover, we may choose $\sigma$ so that $\arg(p_u)$ is strictly increasing or strictly decreasing along $\sigma$, and parameterized according to $\arg(p_u)$.
\end{lemma}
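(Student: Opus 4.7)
The plan is to exploit the standard local normal form for an analytic function at a critical point. Fix $u \in \Theta^{-1}(v)$ and a critical point $x$ of $p_u$ of multiplicity $m \geq 1$, and let $\epsilon = |p_u(x)|$. On some neighborhood of $x$ there is a conformal change of coordinate $\zeta$ with $\zeta(x)=0$ satisfying
\[
p_u(z) \;=\; p_u(x) + \zeta(z)^{m+1}.
\]
Setting $w_0 = p_u(x)$, the condition $|p_u(z)|=\epsilon$ pulls back to $|w_0 + \zeta^{m+1}|=|w_0|$, i.e., $\zeta^{m+1}$ lies on the circle of radius $\epsilon$ through $0$ tangent to the line perpendicular to $w_0$ at $0$. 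Taking $(m+1)$-st roots shows that the local level set in $\zeta$-coordinates consists of exactly $2(m+1)$ analytic arcs emanating from $0$, with $|\zeta|$ strictly monotone along each half-arc. Since $\arg(p_u) = \arg(w_0 + \zeta^{m+1})$ is just the angular coordinate on the target circle $|w|=\epsilon$, it is likewise strictly monotone on each such arc away from $x$, and each arc can therefore be parameterized according to $\arg(p_u)$.

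Next I would choose $\delta^{(2)}_{u,x} \in (0,\delta^{(1)})$ small enough that three things hold inside $B_{\delta^{(2)}_{u,x}}(x)$: (i) the chart $\zeta$ is well-defined and biholomorphic onto its image; (ii) the level set $\{z : |p_u(z)|=\epsilon\}$ coincides with the union of the $2(m+1)$ local arcs described above, so that no other component of the level set intrudes (possible because the rest of $\{|p_u|=\epsilon\}$ is closed and does not contain $x$); and (iii) for any point $y$ on one of these local arcs with $y \in B_{\delta^{(2)}_{u,x}}(x)$, the sub-arc from $y$ to $x$ (traversed with $|\zeta|$ decreasing monotonically, hence $|z-x|$ uniformly controlled by continuity of $\zeta^{-1}$) is contained in $B_{\delta^{(1)}}(x)$.

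Given $\delta^{(2)}_{u,x}$, any point $y \in B_{\delta^{(2)}_{u,x}}(x)$ with $|p_u(y)|=\epsilon$ must lie on one of the $2(m+1)$ local arcs at $x$; the sub-arc of that arc from $y$ to $x$ is the desired path $\sigma$, it stays in $\lambda \cap B_{\delta^{(1)}}(x)$, and, since $\arg(p_u)$ is strictly monotone along it, $\sigma$ can be parameterized according to $\arg(p_u)$ and the monotonicity can be chosen to be strictly increasing or strictly decreasing depending on the branch. The final constant is obtained by setting
\[
\delta^{(2)} \;\colonequals\; \min\bigl\{\delta^{(2)}_{u,x} : u \in \Theta^{-1}(v),\ x \text{ a critical point of } p_u\bigr\},
\]
which is positive because $\Theta^{-1}(v)$ is finite (by the counting result of Beardon--Carne--Ng cited in the paper) and each $p_u$ has finitely many critical points.

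The main obstacle is point (ii) above: ensuring that in a sufficiently small ball around $x$ the entire level set $\{|p_u|=\epsilon\}$ reduces to the $2(m+1)$ local branches through $x$, with no spurious pieces of $\lambda$ or of other components of the level set sneaking in. This is really just a compactness/closedness statement applied to finitely many configurations, but it is the only place where one must move beyond the algebraic normal-form picture; everything else — monotonicity of $\arg(p_u)$, control of $|z-x|$ along $\sigma$, and uniformity in $u$ — follows immediately from the normal form and the finiteness of $\Theta^{-1}(v)$.
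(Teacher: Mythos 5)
Your proposal is correct and follows essentially the same route as the paper's proof: the local normal form $p_u(z)=p_u(x)+\zeta(z)^{m+1}$, the explicit description of the model level set as $2(m+1)$ arcs through the critical point along which $\arg(p_u)$ is strictly monotone, and the finiteness of $\Theta^{-1}(v)$ (plus finiteness of critical points) to obtain a uniform $\delta^{(2)}$. The point you flag as the main obstacle is handled implicitly in the paper in the same way you suggest: once the chart is small enough, the level set $\{|p_u|=\epsilon\}$ inside it pulls back exactly to the local arcs of the model, so no separate argument is needed beyond shrinking the ball.
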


\begin{proof}
Since $\Theta^{-1}(v)$ is finite (\cite{BCN}), we need only show the result for some fixed $u\in\Theta^{-1}(v)$.  Let $u\in\Theta^{-1}(v)$, and let $\lambda$ be one of the critical level curves of $(p_u,G_{p_u})$, (with $|f|\equiv\epsilon>0$ on $\lambda$).  Let $x\in\lambda$ be a given critical point of $p_u$.  Let $k\in\mathbb{N}$ denote the multiplicity of $x$ as a zero of ${p_u}'$.  Then there is some neighborhood $D\subset{B_{\delta^{(1)}}(x)}$ of $x$ and $S>0$ and conformal map $\phi:D\to{B_S(p_u(0))}$ such that $p_u(z)=\phi(z)^{k+1}+p_u(x)$ for all $z\in{D}$.  Define $f(w)\colonequals{w}^{k+1}+p_u(x)$.  The level curves of $f$ are well understood.  Let $L$ denote the level curve of $f$ which contains $0$.  Then if $w\in{L}$, there is a path in $L$ from $w$ to $0$ which is contained in $B_{|w|}(0)$, along which $\arg(f)$ is either strictly increasing or strictly decreasing.  Choose some $r>0$ such that $B_{r}(x)\subset{D}$.  Let $y\in{B_{r}(x)}$ be any point such that $|p_u(y)|=\epsilon$.  Then $\phi(y)\in L$.  Let $\sigma^{(1)}$ denote the path in $B_{|\phi(y)|}(0)$ from $\phi(y)$ to $0$ along which $\arg(f)$ is strictly increasing or strictly decreasing.  Then if we define $\sigma\colonequals\phi^{-1}\circ\sigma^{(1)}$, $\sigma\subset\phi^{-1}(B_{|\phi(y)|}(0))\subset{D}\subset{B_{\delta^{(1)}}(x)}$, and for each $t\in[0,1]$, $p_u(\sigma(t))=f(\sigma^{(1)}(t))$, so $\arg(p_u)$ is either strictly increasing or strictly decreasing along $\sigma$.

Since $p_u$ has only finitely many critical points in $\lambda$, we may choose $\delta^{(2)}>0$ to be smaller than the $r$ chosen above for each critical point $x$ of $p_u$ in $\lambda$, and this $\delta^{(2)}$ has the desired property.

\end{proof}

\begin{lemma}\label{lemma:Points close to x have gradient path to x.}
Let $v\in{V_{n-1}}$, and $\delta^{(1)}>0$ be given.  There exists some $\delta^{(2)}\in(0,\delta^{(1)})$ such that the following holds.  Let $u\in\Theta^{-1}(v)$ be given.  Let $\lambda$ be a critical level curve of $(p_u,G_{p_u})$ (with $|p_u|\equiv\epsilon>0$ on $\lambda$), and let $x\in\lambda$ be a critical point of $p_u$.  Then if $y\in{B_{\delta^{(2)}}(x)}$ satisfies $\arg(p_u(y))=\arg(p_u(x))=0$, then there is a path $\sigma$ from $y$ to $x$ which is contained in $B_{\delta^{(1)}}(x)$ and such that $\arg(p_u(\sigma(r)))=\arg(p_u(x))$ for all $r$.  Moreover we may choose $\sigma$ so that $|p_u|$ is strictly increasing or strictly decreasing along $\sigma$, and parameterized according to $|p_u|$.
\end{lemma}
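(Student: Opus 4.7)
The plan is to mimic the proof of Lemma~\ref{lemma:Points close to x have path to x in lambda.} almost verbatim, swapping the role of level curves (fibers of $|p_u|$) with the role of gradient lines (fibers of $\arg(p_u)$). As before, I would first use finiteness of $\Theta^{-1}(v)$ (from~\cite{BCN}) so that it suffices to produce, for each fixed $u\in\Theta^{-1}(v)$ and each critical point $x$ of $p_u$ in $\lambda$, a local radius $r=r(u,x)>0$ that works; then set $\delta^{(2)}$ to be the minimum over the finitely many such pairs.

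Fix such a $u$ and $x$, and let $k\geq 1$ be the multiplicity of $x$ as a zero of ${p_u}'$. The standard local factorization yields a neighborhood $D\subset B_{\delta^{(1)}}(x)$, some $S>0$, and a conformal map $\phi:D\to B_S(0)$ with $\phi(x)=0$ such that
\[
p_u(z)=\phi(z)^{k+1}+p_u(x)\qquad(z\in D).
\]
Set $f(w)\colonequals w^{k+1}+p_u(x)$ and, since by hypothesis $\arg(p_u(x))=0$, the number $p_u(x)$ is a positive real $\epsilon$. Then $\arg(f(w))=0$ precisely when $w^{k+1}\in(-\epsilon,\infty)\subset\mathbb{R}$, i.e., when $w$ lies on one of the $2(k+1)$ half-lines emanating from $0$ at angles $\tfrac{j\pi}{k+1}$, $j=0,1,\dots,2k+1$. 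On the $k+1$ "outward" rays (even $j$), $f(w)=|w|^{k+1}+\epsilon$, so $|f|$ is strictly increasing in $|w|$ from $\epsilon$; on the $k+1$ "inward" half-lines (odd $j$), $f(w)=\epsilon-|w|^{k+1}$, so $|f|$ is strictly decreasing from $\epsilon$ down to $0$ as $|w|$ grows from $0$ to $\epsilon^{1/(k+1)}$.

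Now choose $r>0$ small enough that $B_r(x)\subset D$ and $\phi(B_r(x))\subset B_{\epsilon^{1/(k+1)}}(0)$. Any point $y\in B_r(x)$ with $\arg(p_u(y))=0$ satisfies $\arg(f(\phi(y)))=0$, so $\phi(y)$ lies on one of the $2(k+1)$ half-lines above. Let $\sigma^{(1)}:[0,1]\to B_{|\phi(y)|}(0)$ be the affine parametrization of the line segment from $\phi(y)$ to $0$ along that half-line; by construction $\arg(f)\equiv 0$ along $\sigma^{(1)}$ and $|f|$ is strictly monotone along $\sigma^{(1)}$. Define $\sigma\colonequals\phi^{-1}\circ\sigma^{(1)}$. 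Then $\sigma$ is a path from $y$ to $x$ contained in $\phi^{-1}(B_{|\phi(y)|}(0))\subset D\subset B_{\delta^{(1)}}(x)$, and $p_u(\sigma(t))=f(\sigma^{(1)}(t))$, so $\arg(p_u)\equiv 0$ on $\sigma$ while $|p_u|$ is strictly monotone along $\sigma$. Finally, reparametrize $\sigma$ by $|p_u|$, which is legal because $|p_u|$ is strictly monotone (and thus a homeomorphism onto its image) along $\sigma$.

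There is no real obstacle here beyond the care needed to verify the two monotonicity types; the same geometric picture that drove the level-curve version (Lemma~\ref{lemma:Points close to x have path to x in lambda.}) works for the gradient-line version after replacing the circles $\{|w|=\text{const}\}$ with the $2(k+1)$ radial half-lines $\{\arg(w^{k+1})\in\pi\mathbb{Z}\}$. Choosing $\delta^{(2)}$ to be the minimum of the $r$'s across the finitely many pairs $(u,x)$ with $u\in\Theta^{-1}(v)$ and $x$ a critical point of $p_u$ in some critical level curve of $p_u$ completes the proof.
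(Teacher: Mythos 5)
Your proposal is correct and follows essentially the same route as the paper, which simply observes that the argument of Lemma~\ref{lemma:Points close to x have path to x in lambda.} carries over; you have merely written out explicitly the local normal form $p_u=\phi^{k+1}+p_u(x)$ and replaced the circles $\{|w|=\mathrm{const}\}$ by the $2(k+1)$ radial half-lines where $w^{k+1}$ is real, together with the finiteness of $\Theta^{-1}(v)$ to get a uniform $\delta^{(2)}$. No gaps.
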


\begin{proof}
Essentially the same argument for Lemma~\ref{lemma:Points close to x have path to x in lambda.} works here.
\end{proof}

\begin{lemma}\label{lemma:If g is close to f then they have same values close to each other.}
Given any GFBP $(f,G)$ and $\eta>0$, and any compact set $G'\subset{G}$ which does not contain any critical points of $f$, there exists $\tau>0$ such that if $g$ is analytic on $G$, and $|f(z)-g(z)|<\tau$ for all $z\in{G}$, then the following hold:

\begin{enumerate}
\item\label{item:Points far from critical points of f.}
If $z^{(0)}\in{G'}$, and $w^{(1)}\in{B_{\tau}(f(z^{(0)}))}$, then there is a point $z^{(1)}\in{B_{\eta}(z^{(0)})}$ such that $g(z^{(1)})=w^{(1)}$.  (In particular we may put $w^{(1)}=f(z^{(0)})$.)

\item\label{item:Points far from critical points of g.}
If $z^{(0)}\in{G'}$, and $w^{(1)}\in{B_{\tau}(g(z^{(0)}))}$, then there is a point $z^{(1)}\in{B_{\eta}(z^{(0)})}$ such that $f(z^{(1)})=w^{(1)}$.  (In particular we may put $w^{(1)}=g(z^{(0)})$.)
\end{enumerate}
\end{lemma}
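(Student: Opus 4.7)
The plan is to use Rouch\'e's theorem, leveraging the compactness of $G'$ and the absence of critical points of $f$ on $G'$ to obtain uniform local-invertibility estimates. Since $G'$ is a compact subset of $G$ disjoint from the finite set of critical points of $f$, I may enlarge $G'$ slightly to a compact set $G''$ with $G'\subset\mathrm{int}(G'')\subset G''\subset G$, still containing no critical points of $f$. Then $|f'|$ is bounded below by some $c>0$ on $G''$, and $f'$ is uniformly continuous there.

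First I would shrink $\eta>0$ if necessary so that, for every $z^{(0)}\in G'$, the closed ball $\overline{B_\eta(z^{(0)})}$ lies in $G''$ and $f$ is injective on it. Local injectivity at each point follows from the inverse function theorem ($|f'(z^{(0)})|\geq c>0$), and a standard Lebesgue-number / compactness argument produces a single $\eta$ that works uniformly on $G'$. A power-series expansion (or the mean value inequality together with uniform continuity of $f'$) then yields a constant $c'>0$, independent of $z^{(0)}\in G'$, such that
\[
|f(z)-f(z^{(0)})|\;\geq\;c'\eta\qquad\text{for all } z\in\partial B_\eta(z^{(0)}),\ z^{(0)}\in G'.
\]

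Now choose $\tau\in(0,\,c'\eta/3)$. For item~\ref{item:Points far from critical points of f.}, fix $z^{(0)}\in G'$ and $w^{(1)}\in B_\tau(f(z^{(0)}))$. On $\partial B_\eta(z^{(0)})$, the triangle inequality gives
\[
|f(z)-w^{(1)}|\;\geq\;|f(z)-f(z^{(0)})|-|f(z^{(0)})-w^{(1)}|\;\geq\;c'\eta-\tau\;>\;2\tau\;>\;|f(z)-g(z)|.
\]
By Rouch\'e's theorem, $f-w^{(1)}$ and $g-w^{(1)}$ have the same number of zeros in $B_\eta(z^{(0)})$. Since $f$ is injective on $\overline{B_\eta(z^{(0)})}$ and $w^{(1)}$ lies in the interior of the simple closed curve $f(\partial B_\eta(z^{(0)}))$ (this follows from the same inequality together with the winding-number/argument-principle count for $f-f(z^{(0)})$, which has exactly one zero), $f-w^{(1)}$ has exactly one zero in $B_\eta(z^{(0)})$, and hence so does $g-w^{(1)}$, giving the desired $z^{(1)}$.

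Item~\ref{item:Points far from critical points of g.} is symmetric. Since $|f-g|<\tau$ on $G''$, we also obtain $|g(z)-g(z^{(0)})|\geq |f(z)-f(z^{(0)})|-2\tau\geq c'\eta-2\tau>\tau$ on $\partial B_\eta(z^{(0)})$, so the same Rouch\'e comparison (now applied with $f$ and $g$ interchanged in their roles) supplies a $z^{(1)}\in B_\eta(z^{(0)})$ with $f(z^{(1)})=w^{(1)}$. The main technical point, and essentially the only nonroutine part, is arranging the uniformity of $\eta$ and $c'$ across $z^{(0)}\in G'$; once that is in hand, the Rouch\'e estimates are immediate.
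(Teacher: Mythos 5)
Your Rouch\'e argument is correct in substance, and it is a genuinely different (and more explicit) route than the paper's, which offers no details at all: the paper simply asserts that the lemma ``follows from elementary properties of an analytic function of one complex variable, including primarily the maximum modulus principle.'' The real content that the one-line appeal hides is precisely the uniformity you arrange: enlarging $G'$ to a compact $G''\subset G$ free of critical points, extracting a uniform injectivity radius and the uniform lower bound $|f(z)-f(z^{(0)})|\geq c'\eta$ on $\partial B_\eta(z^{(0)})$ for $z^{(0)}\in G'$, and then letting Rouch\'e (equivalently, the argument principle, or the minimum--modulus argument applied to $1/(f-w)$ that the paper presumably has in mind) deliver the solvability of $g(z)=w^{(1)}$ and $f(z)=w^{(1)}$ inside the ball. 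Note also that shrinking $\eta$ is harmless here since the conclusion for a smaller radius implies it for the given one.

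One small bookkeeping point in item~2: with $\tau<c'\eta/3$ your estimates give $|g(z)-w^{(1)}|\geq |g(z)-g(z^{(0)})|-\tau\geq c'\eta-3\tau$ on $\partial B_\eta(z^{(0)})$, which need not dominate $|f(z)-g(z)|<\tau$, so the ``roles interchanged'' Rouch\'e comparison is not literally justified as written. Either choose $\tau<c'\eta/4$, or, more simply, bypass $g$ on the circle entirely: since $|w^{(1)}-f(z^{(0)})|\leq |w^{(1)}-g(z^{(0)})|+|g(z^{(0)})-f(z^{(0)})|<2\tau<c'\eta$, the point $w^{(1)}$ lies in the same complementary component of the curve $f(\partial B_\eta(z^{(0)}))$ as $f(z^{(0)})$, so the argument-principle count for $f-w^{(1)}$ in $B_\eta(z^{(0)})$ is already $1$ and item~2 follows directly. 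With that minor repair the proof is complete.
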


\begin{proof}
This follows from elementary properties of an analytic function of one complex variable, including primarily the maximum modulus principle.
\end{proof}

\begin{definition}
For $u\in\mathbb{C}^{n-1}$, if $\gamma:[0,1]\to\mathbb{C}$ is a path, and $0<a<b<1$, then for $0<\rho<\delta$, we say that $\gamma$ takes an $(\rho,\delta)$ trip on $[a,b]$ with respect to $p_u$ if the following hold.

\begin{itemize}
\item
There is some $\iota>0$ such that for all $r\in(a-\iota,a)\cup(b,b+\iota)$, $\gamma(r)$ is less than $\rho$ away from some critical point of $p_u$ (possibly different critical points of $p_u$ for different values of $r$).

\item
For each $r\in(a,b)$, $\gamma(r)$ is greater than or equal to $\rho$ away from every critical point of $p_u$.

\item
There is some $r\in(a,b)$ such that $\gamma(r)$ is greater than $\delta$ away from every critical point of $p_u$.
\end{itemize}
\end{definition}

\begin{lemma}\label{lemma:If p_{widehat{u}} has a level curve edge, then p_u has a level curve edge.}
Fix some $v=(v^{(1)},\ldots,v^{(n-1)})\in{V_{n-1}}$ not the zero vector, and $\delta^{(1)}>0$.  Then there exists a constant $\rho>0$ such that for each $u\in\Theta^{-1}(v)$, the following holds.  Fix some $\widehat{u}\in{B_{\rho}}(u)$ such that, if we define $\widehat{v}=(\widehat{v^{(1)}},\ldots,\widehat{v^{(n-1)}})\colonequals\Theta(\widehat{u})$, then $\arg(\widehat{v^{(k)}})=\arg(v^{(k)})$ for each $k\in\{1,\ldots,n-1\}$.  For some $k\in\{1,\ldots,n-1\}$ with $|v^{(k)}|\neq0$, let $\widehat{\lambda}$ denote the level curve of $p_{\widehat{u}}$ which contains $\widehat{u^{(k)}}$.  Let $\widehat{E}$ denote some edge of $\widehat{\lambda}$ which is incident to $\widehat{u^{(k)}}$, and let $\widehat{\gamma}$ denote a parameterization of $\widehat{E}$ according to $\arg(p_{\widehat{u}})$ beginning with $\widehat{u^{(k)}}$,  with domain $[\alpha,\beta]$ (where $\alpha<\beta$ if $\arg(p_{\widehat{u}})$ is increasing on $\widehat{E}$, and $\alpha>\beta$ otherwise).  Then if we let $\lambda$ denote the critical level curve of $p_u$ containing $u^{(k)}$, there is a path $\gamma:[\alpha,\beta]\to\lambda$ such that $\gamma(\alpha)=u^{(k)}$ and, for each $t\in[\alpha,\beta]$, $\arg(p_u(\gamma(t)))=t$ and $|\gamma(t)-\widehat{\gamma}(t)|<\delta^{(1)}$.
\end{lemma}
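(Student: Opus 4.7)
The plan is to construct $\gamma$ as a continuous lift through $p_u$ of the circular arc $\sigma(t)\colonequals|v^{(k)}|e^{it}$, $t\in[\alpha,\beta]$, using $\widehat{\gamma}$ as a template. The hypothesis $\arg(\widehat{v^{(k)}})=\arg(v^{(k)})$ forces $\alpha=\arg(v^{(k)})$ modulo $2\pi$, so $p_u(u^{(k)})=v^{(k)}=\sigma(\alpha)$, matching the start of $\widehat{\gamma}$ (along which $p_{\widehat{u}}(\widehat{\gamma}(t))=|\widehat{v^{(k)}}|e^{it}$). The target is then $p_u\circ\gamma=\sigma$ with $\gamma(\alpha)=u^{(k)}$ and $|\gamma(t)-\widehat{\gamma}(t)|<\delta^{(1)}$.

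First I would fix the constants. Since $\Theta^{-1}(v)$ is finite by~\cite{BCN}, it suffices to work with one $u\in\Theta^{-1}(v)$ at a time. Apply Lemma~\ref{lemma:Points close to x have path to x in lambda.} at each critical point of $p_u$ to obtain $\delta^{(2)}\in(0,\delta^{(1)}/2)$, and pick $\iota>0$ so small that $\widehat{\gamma}([\alpha,\alpha+\iota])\subset B_{\delta^{(2)}/2}(\widehat{u^{(k)}})$ (and symmetrically near $\beta$ when that endpoint is a critical point of $p_{\widehat{u}}$); on the middle interval $[\alpha+\iota,\beta-\iota]$ the image $\widehat{\gamma}$ is uniformly bounded away from all critical points of $p_{\widehat{u}}$. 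Then by Lemmas~\ref{lemma:u and hat{u} close implies p_u and p_{hat{u}} close.}, \ref{lemma:If hat{v} is close to v, then any hat{u} is close to some u.}, and~\ref{lemma:If g is close to f then they have same values close to each other.}, I would shrink $\rho$ enough that the critical points of $p_u$ and $p_{\widehat{u}}$ correspond within distance $\rho$, the middle portion of $\widehat{\gamma}$ is still far from critical points of $p_u$, and item~\ref{item:Points far from critical points of f.} of Lemma~\ref{lemma:If g is close to f then they have same values close to each other.} produces preimages within $\delta^{(2)}/2$.

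With these constants set, build $\gamma$ in two stages. On $[\alpha+\iota,\beta-\iota]$, define $\gamma(t)$ to be the unique preimage of $\sigma(t)$ under $p_u$ within $\delta^{(2)}/2$ of $\widehat{\gamma}(t)$; continuity in $t$ follows from local biholomorphicity of $p_u$ away from its critical points. Since $|p_u\circ\gamma|\equiv|v^{(k)}|$ and the image of $\gamma$ stays in a thin tube along $\widehat{\gamma}$, shrinking $\rho$ further (using finiteness of level curves of a fixed modulus) forces $\gamma(t)\in\lambda$. For the endpoint segment $[\alpha,\alpha+\iota]$, the point $\gamma(\alpha+\iota)$ lies within $\delta^{(2)}+\rho<\delta^{(1)}$ of $u^{(k)}$ on $\lambda$, so Lemma~\ref{lemma:Points close to x have path to x in lambda.} supplies an $\arg(p_u)$-monotone path in $\lambda\cap B_{\delta^{(1)}}(u^{(k)})$ from $\gamma(\alpha+\iota)$ to $u^{(k)}$; reparameterizing by $\arg(p_u)$ splices it onto the middle piece. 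The argument at $\beta$ is symmetric when relevant.

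The principal obstacle is branch selection at the critical point $u^{(k)}$: the level curve $\lambda$ has $2(m+1)$ edges meeting there (where $m$ is the multiplicity of $u^{(k)}$ as a zero of $p_u'$), and $\gamma$ must enter along the one shadowing $\widehat{E}$. This follows from the local normal form $p_u(z)-v^{(k)}=c(z-u^{(k)})^{m+1}(1+O(z-u^{(k)}))$ whose coefficient depends continuously on $u$: for $\rho$ small, the $m+1$ local sectors at $u^{(k)}$ align with those at $\widehat{u^{(k)}}$, and the placement of $\gamma(\alpha+\iota)$ within the shadow tube about $\widehat{\gamma}$ pins down the correct sector. Making this precise — tracking $\arg(z-u^{(k)})$ along the proposed lift and comparing with $\arg(z-\widehat{u^{(k)}})$ along $\widehat{\gamma}$ — is the technical step demanding the most care.
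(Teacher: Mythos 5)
There is a genuine gap, and it lies exactly where your construction does its main work. Your decomposition of $[\alpha,\beta]$ is ``small neighborhoods of the endpoints'' versus ``the middle,'' and on the middle you define $\gamma(t)$ as the unique preimage of $|v^{(k)}|e^{it}$ under $p_u$ near $\widehat{\gamma}(t)$, which requires $\widehat{\gamma}(t)$ to stay a \emph{fixed} distance away from the critical points of $p_u$ (so that $p_u$ is injective on a ball of fixed radius $\eta$ chosen before $\widehat{u}$). But the interior of the edge $\widehat{E}$, while free of critical points of $p_{\widehat{u}}$, can pass arbitrarily close to critical points of $p_u$ other than $u^{(k)}$: this happens precisely in the situation the lemma is built for (the scattering construction in the main proof), where a critical point $x^{(1)}$ of $p_u$ with $|p_u(x^{(1)})|=|v^{(k)}|$ is no longer a vertex of the perturbed level curve, and an edge of $\widehat{\lambda}$ sweeps past it at a distance comparable to $|u-\widehat{u}|$, hence smaller than any tube width or injectivity radius fixed in advance of $\widehat{u}$. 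Since $\rho$ must be chosen uniformly before $\widehat{u}$ (and your $\iota$ and the ``uniformly bounded away'' distance on $[\alpha+\iota,\beta-\iota]$ depend on the particular $\widehat{\gamma}$), the unique-preimage lifting step cannot be carried out across these interior near-approaches, and the choice of the compact set on which Lemma~\ref{lemma:If g is close to f then they have same values close to each other.} is applied cannot be made.

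The paper's proof is the same in spirit on the pieces you do treat (lift the arc $|v^{(k)}|e^{it}$ far from critical points; use the argument-monotone arcs of Lemma~\ref{lemma:Points close to x have path to x in lambda.} near a critical point; splice), but it adds the missing structural device: the notion of a $(\rho^{(1)},\delta^{(1)}/2)$ trip. It partitions $[\alpha,\beta]$ into finitely many subintervals on which $\widehat{\gamma}$ stays at distance at least $\rho^{(1)}$ from \emph{every} critical point of $p_u$ (where the lifting argument applies) and intervening subintervals on which $\widehat{\gamma}$ stays within $\delta^{(1)}/2$ of a \emph{single} critical point $u^{(l)}$ of $p_u$, where $l$ need not equal $k$; on these the path is routed through $u^{(l)}$ along two argument-monotone arcs of the level curve, and a separate no-backtracking argument (using local injectivity along the already-constructed portion of $\gamma$ and the confinement of the monotone arc to $B_{\delta^{(1)}/2}(u^{(l)})$) shows the spliced path continues forward in $\arg(p_u)$ rather than retracing. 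Your ``branch selection at $u^{(k)}$'' concern is real but secondary and is absorbed by this same monotonicity argument; the essential missing idea is handling near-approaches to critical points of $p_u$ in the interior of $\widehat{E}$, which your endpoint-versus-middle split cannot see.
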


\begin{proof}

We will show that the result of the lemma holds for any fixed $u\in\Theta^{-1}(v)$, which will suffice because $\Theta^{-1}(v)$ is finite by \cite{BCN}.  Broadly speaking, the idea of the proof is that close to any critical point of $p_u$, $\gamma$ can be defined using Lemma~\ref{lemma:Points close to x have path to x in lambda.}, and far from the critical points of $p_u$, $\gamma$ can be defined using Lemma~\ref{lemma:If g is close to f then they have same values close to each other.}.  The notion of a $(\rho,\delta)$ trip defined above is how we will quantify "close" and "far" from the critical points of $p_u$.

Reduce $\delta^{(1)}>0$ if necessary so that $\delta^{(1)}<\dfrac{\text{mindiff}({u_1}^{(1)},\ldots,{u_1}^{(n-1)})}{2}$ and for each $l\in\{1,\ldots,n-1\}$ with $|v^{(l)}|\neq0$, if $|z-u^{(l)}|<\delta^{(1)}$, then $|p_u(z)-v^{(l)}|<\frac{\text{mindiff}(0,|v^{(1)}|,\ldots,|v^{(n-1)}|)}{4}$.  Of course $\frac{\text{mindiff}(0,|v^{(1)}|,\ldots,|v^{(n-1)}|)}{4}\leq\frac{|v^{(l)}|}{4}$, so by geometry, if $|z-u^{(l)}|<\delta^{(1)}$ then $|\arg(p_u(z))-\arg(v^{(l)})|<\frac{\pi}{4}$.  Note that this also implies that if $u^{(l)}$ is a critical point of $p_u$ at which $p_u\neq0$, and $\sigma$ is a path contained in $B_{\delta^{(1)}}(u^{(l)})$, then $\Delta_{\arg}(p_u,\sigma)<\frac{\pi}{2}$.

By Lemma~\ref{lemma:Points close to x have path to x in lambda.}, we may choose $\delta^{(2)}\in(0,\frac{\delta^{(1)}}{4})$ such that the following holds.  If $y\in{B_{\delta^{(2)}}(u^{(l)})}$ for some $l\in\{1,\ldots,n-1\}$ such that $|p_u(y)|=|v^{(l)}|\neq0$, then there is a path $\sigma$ from $y$ to $u^{(l)}$ contained in $B_{\frac{\delta^{(1)}}{2}}(u^{(l)})\cap{E_{p_u,|v^{(l)}|}}$ such that $\arg(p_u)$ is strictly monotonic along $\sigma$.

Since $p_u$ is an open mapping, we may choose some $M>0$ small enough so that for each $k\in\{1,\ldots,n-1\}$, $B_{2M}(v^{(k)})\subset{p_u}(B_{\delta^{(2)}}(u^{(k)}))$.  By Lemma~\ref{lemma:u and hat{u} close implies p_u and p_{hat{u}} close.}, we may choose a $\rho^{(1)}>0$ so that $\rho^{(1)}<\frac{\delta^{(2)}}{2}$, and if $\widehat{u}\in{B_{\rho^{(1)}}(u)}$, then $|p_u(z)-p_{\widehat{u}}(\widehat{z})|<M$ for all $z,\widehat{z}\in{G_{p_u}}$ such that $|z-\widehat{z}|<\rho^{(1)}$.

Let $K$ denote the set of all points $x$ in $G_{p_u}$ such that the following hold.

\begin{itemize}
\item
$x\in{E_{p_u,|v^{(k)}|}}$ for some $k\in\{1,\ldots,n-1\}$.

\item
$|x-u^{(k)}|\geq\frac{\delta^{(2)}}{2}$ for each $k\in\{1,\ldots,n-1\}$.
\end{itemize}

By the compactness of $K$, we may choose an $\eta>0$ such that the following holds.

\begin{itemize}
\item
$\eta<\min(d(\{z\},\partial{G_{p_u}}):z\in{K})$.

\item
$p_u$ is injective on $B_{\eta}(x)$ for each $x\in{K}$.  (Since $K$ does not contain any critical point of $p_u$.)

\item
$\eta<\rho^{(1)}$.  (Thus $|x-u^{(l)}|>\eta$ for each $l\in\{1,\ldots,n-1\}$, since $\rho^{(1)}<\frac{\delta^{(2)}}{2}$.)
\end{itemize}

Define $G'\colonequals\{x\in{G_{p_u}}:d(x,\partial{G_{p_u}})\geq\eta,d(x,u^{(k)})\geq\eta\text{ for each }k\}$.  By Lemma~\ref{lemma:If g is close to f then they have same values close to each other.}, we may choose $\tau>0$ so that $\tau<\min(M,\frac{\text{mindiff}(0,|v^{(1)}|,\ldots,|v^{(n-1)}|)}{2})$, and if $f$ is analytic on $G'$ with $|f-p_u|<\tau$ on $G'$, then for all $x$ in $G'$, the following hold.

\begin{itemize}
\item
For any $w\in{B_{\tau}(p_u(x))}$, there is a $y\in{B_{\eta}(x)}$ with $f(y)=w$.

\item
For any $w\in{B_{\tau}(f(x))}$, there is a $y\in{B_{\eta}(x)}$ with $p_u(y)=w$.
\end{itemize}

By Lemma~\ref{lemma:u and hat{u} close implies p_u and p_{hat{u}} close.} and the continuity of $\Theta$, we may choose $\rho\in(0,\rho^{(1)})$ so that if $\widehat{u}\in{B_{\rho}(u)}$, then $|p_u(z)-p_{\widehat{u}}(\widehat{z})|<\tau$ for all $z,\widehat{z}\in{G_{p_u}}$ such that $|z-\widehat{z}|<\rho$, and for $\widehat{v}=(\widehat{v^{(1)}},\ldots,\widehat{v^{(n-1)}})\colonequals\Theta(\widehat{u})$, $|\widehat{v}-v|<\tau$.  We now show that the statement of the lemma holds for the chosen $\rho$.

Let $\widehat{u}\in{B_{\rho}}(u)$ be chosen.  Fix some $k\in\{1,\ldots,n-1\}$ such that $|v^{(k)}|\neq0$.  Note that since $\tau<\frac{\text{mindiff}(0,|v^{(1)}|,\ldots,|v^{(n-1)}|)}{2}$, and $|v-\widehat{v}|<\tau$, we have $|\widehat{v^{(k)}}|>\frac{\text{mindiff}(0,|v^{(1)}|,\ldots,|v^{(n-1)}|)}{2}$.  Let $\widehat{\lambda}$ denote the level curve of $p_{\widehat{u}}$ which contains $\widehat{u^{(k)}}$.  Let $\widehat{E}$ denote some edge of $\widehat{\lambda}$ which is incident to $\widehat{u^{(k)}}$.  Let $\alpha$ denote some choice of the argument of $p_{\widehat{u}}(\widehat{u^{(k)}})$, and let $\widehat{\gamma}:[\alpha,\beta]\to\widehat{\lambda}$ be a path which parameterizes $\widehat{E}$ according to the argument of $p_{\widehat{u}}$ beginning with $\widehat{u^{(k)}}$.  (Here we assume that $\arg(p_{\widehat{u}})$ is increasing as $\widehat{E}$ is traversed beginning with $\widehat{u^{(k)}}$, and thus $\beta>\alpha$.  Otherwise make the appropriate minor changes.)

We will now define a path $\gamma$ with domain $[\alpha,\beta]$ which has the desired properties.  We first identify the sub-intervals of $[\alpha,\beta]$ on which $\widehat{\gamma}$ takes a $(\rho^{(1)},\delta^{(1)}/2)$ trip (with respect to $p_u$).  On these sub-intervals we will define $\gamma$ in one way, and on the intervening sub-intervals we will define $\gamma$ in another way.  Note that by the definition of a $(\rho^{(1)},\delta^{(1)}/2)$ trip over an interval, if $\widehat{\gamma}$ takes $(\rho^{(1)},\delta^{(1)}/2)$ trips over two sub-intervals $I^{(1)},I^{(2)}\subset[\alpha,\beta]$, then either $I^{(1)}=I^{(2)}$, or $I^{(1)}$ and $I^{(2)}$ are disjoint. Therefore since $\widehat{\gamma}$ is a rectifiable path, and any sub-interval on which $\widehat{\gamma}$ takes a $(\rho^{(1)},\delta^{(1)}/2)$ trip must have length at least $\dfrac{\delta^{(1)}}{2}-\rho^{(1)}$, $\widehat{\gamma}$ takes at most finitely many distinct $(\rho^{(1)},\delta^{(1)}/2)$ trips.

\begin{case}\label{case:widehat{gamma} takes p_u trips.}
$\widehat{\gamma}$ takes a $(\rho^{(1)},\delta^{(1)}/2)$ trip on some sub-interval of $[\alpha,\beta]$.
\end{case}

Let $[r^{(1)},s^{(1)}],\ldots,[r^{(N)},s^{(N)}]\subset[\alpha,\beta]$ be the disjoint subintervals of $[\alpha,\beta]$ over which $\gamma$ takes $(\rho^{(1)},\delta^{(1)}/2)$ trips, ordered so that $s^{(k)}<r^{(k+1)}$ for each $k\in\{1,\ldots,N-1\}$.  Fix for the moment some $j\in\{1,\ldots,N\}$ and some $r\in[r^{(j)},s^{(j)}]$.

For all $t\in[r^{(j)},s^{(j)}]$, define $w(t)\colonequals|v^{(k)}|e^{it}$.  Then since $\tau<\text{mindiff}(0,|v^{(1)}|,\ldots,|v^{(n-1)}|)$, $|w(t)-p_{\widehat{u}}(\widehat{\gamma}(t))|<\tau$, so there is some $y\in{B_{\eta}(\widehat{\gamma}(r))}$ such that $p_u(y)=w(r)$.  Moreover, since $p_u$ is injective in $B_{\eta}(\widehat{\gamma}(r))$, this choice of $y$ is unique.  Define $\gamma(r)=y$.

Since $p_u$ is injective on $B_{\eta}(\widehat{\gamma}(r))$ for each $r\in[r^{(j)},s^{(j)}]$, and $p_u$ is an open mapping, it is easy to show that $\gamma$ is a continuous function, and thus a path  from $\gamma(r^{(j)})$ to $\gamma(s^{(j)})$.  Further, if $r\in[r^{(j)},s^{(j)}]$, $|p_u(\gamma(r))|=|w(r)|=|v^{(k)}|$.  Therefore we conclude that $\gamma|_{[r^{(j)},s^{(j)}]}$ is a path in $E_{p_u,|v^{(k)}|}$, and by construction, for each $r\in[r^{(j)},s^{(j)}]$, $|\widehat{\gamma}(r)-\gamma(r)|<\eta$ and $\arg(p_u(\gamma(r)))=r$.  Having done this for each $j\in\{1,\ldots,N\}$, we now wish to define $\gamma$ on $(s^{(j)},r^{(j+1)})$ for each $j\in\{1,\ldots,N-1\}$.

Again fix for the moment some new $j\in\{1,\ldots,N\}$.  Since there is no sub-interval of $(s^{(j)},r^{(j+1)})$ on which $\widehat{\gamma}$ takes a $(\rho^{(1)},\delta^{(1)}/2)$ trip, $\widehat{\gamma}(r)$ is within $\delta^{(1)}/2$ of some critical point of $p_u$ for each $r\in(s^{(j)},r^{(j+1)})$.  However $\delta^{(1)}<\frac{\text{mindiff}(u)}{2}$, thus there is some unique $l\in\{1,\ldots,n-1\}$ such that for each $r\in(s^{(j)},r^{(j+1)})$, $|\widehat{\gamma}(r)-u^{(l)}|\leq\delta^{(1)}/2$.  Since $\widehat{\gamma}$ takes a $(\rho^{(1)},\delta^{(1)}/2)$ trip over $[r^{(j)},s^{(j)}]$, $|\widehat{\gamma}(s^{(j)})-u^{(l)}|=\rho^{(1)}$.  Therefore

\[
|\gamma(s^{(j)})-u^{(l)}|\leq|\gamma(s^{(j)})-\widehat{\gamma}(s^{(j)})|+|\widehat{\gamma}(s^{(j)})-u^{(l)}|<\eta+\rho^{(1)}<\delta^{(2)}.
\]

In addition to this, $|p_u(\gamma(s^{(j)}))|=|v^{(k)}|$, so by choice of $\delta^{(1)}$,

\[
||v^{(k)}|-|v^{(l)}||=||p_u(\gamma(s^{(j)}))|-|v^{(l)}||<|p_u(\gamma(s^{(j)}))-v^{(l)}|<\text{mindiff}(0,|v^{(1)}|,\ldots,|v^{(n-1)}|).
\]

Therefore we conclude that $|p_u(\gamma(s^{(j)}))|=|v^{(l)}|=|v^{(k)}|$.  Then by choice of $\delta^{(2)}$, there is some path $\sigma^{(1)}$ from $\gamma(s^{(j)})$ to $u^{(l)}$ contained in $B_{\frac{\delta^{(1)}}{2}}(u^{(l)})\cap{E_{p_u,|v^{(l)}|}}$ such that $\arg(p_u)$ is strictly monotonic on $\sigma^{(1)}$, and $\sigma^{(1)}$ is parameterized according to $\arg(p_u)$.  Since $\arg(p_{\widehat{u}})$ is increasing along $\widehat{\gamma}$, $\arg(p_u)$ is increasing along the portions of $\gamma$ which have already been defined.  Let $D$ denote an open region containing $\gamma(s^{(j)})$ on which $p_u$ is injective.  Choose some $t^{(0)}\in(r^{(j)},s^{(j)})$ such that $\gamma(t^{(0)},s^{(j)})\subset{D}$.  If $\arg(p_u)$ is decreasing on $\sigma^{(1)}$, then since $p_u$ is injective on $D$, for each $r\in(s^{(j)},t^{(0)})$, $\sigma^{(1)}(r)=\gamma(r)$.  Furthermore, since $p_u$ is injective in a neighborhood of each point of $\gamma([r^{(j)},s^{(j)}])$, $\sigma^{(1)}$ must continue to trace back along the entire length of $\gamma([r^{(j)},s^{(j)}])$.  This is because both $\sigma^{(1)}$ and $\gamma$ are parameterized according to $\arg(p_u)$, so any branching off of $\sigma^{(1)}$ from $\gamma$ would have to be a critical point of $p_u$.  However $\sigma^{(1)}$ may not trace back along $\gamma([r^{(j)},s^{(j)}])$ because the image of $\sigma^{(1)}$ is contained in $B_{\frac{\delta^{(1)}}{2}}(u^{(l)})$.  Therefore we conclude that $\arg(p_u)$ is increasing on $\sigma^{(1)}$.

By very similar reasoning we may obtain a path $\sigma^{(2)}$ from $u^{(l)}$ to $\gamma(r^{(j+1)})$ contained in $B_{\frac{\delta^{(1)}}{2}}(u^{(l)})\cap{E_{p_u,|v^{(l)}|}}$ parameterized according to $\arg(p_u)$, and along which $\arg(p_u)$ is increasing.  Moreover, the choice of $\delta^{(1)}$ may now be used to show that the concatenation of these two paths may be assumed to have domain $(s^{(j)},r^{(j+1)})$.  Therefore we define $\gamma(r)\colonequals\sigma(r)$ for each $r\in(s^{(j)},r^{(j+1)})$.  With this definition, we have that for each $r\in(s^{(j)},r^{(j+1)})$, $\arg(p_u(\gamma(r)))=r$, and

\[
|\gamma(r)-\widehat{\gamma(r)}|\leq|\gamma(r)-u^{(l)}|+|u^{(l)}-\widehat{\gamma(r)}|<\dfrac{\delta^{(1)}}{2}+\dfrac{\delta^{(1)}}{2}=\delta^{(1)}.
\]

We extend $\gamma$ in this manner to $(s^{(j)},r^{(j+1)})$ for each $j\in\{1,\ldots,N-1\}$.  Moreover, we may extend $\gamma$ using the exactly similar construction to $[\alpha,r^{(1)})$ and $(s^{(N)},\beta]$, and this extended $\gamma$ has all of the desired properties.

\begin{case}
There is no sub-interval of $[\alpha,\beta]$ along which $\widehat{\gamma}$ takes a $(\rho^{(1)},\delta^{(1)}/2)$ trip.
\end{case}

Then either $|\widehat{\gamma}(r)-u^{(k)}|\leq\delta^{(1)}/2$ for all $r\in[\alpha,\beta]$, or there is some $r^{(0)}\in(\alpha,\beta)$ such that for all $r\in[\alpha,r^{(0)}]$, $|\widehat{\gamma}(r)-u^{(k)}|\leq\delta^{(1)}/2$, and for all $r\in(r^{(0)},\beta]$, $\widehat{\gamma}$ is greater than $\rho^{(1)}$ from any critical point of $p_u$.

\begin{subcase}
$|\widehat{\gamma}(r)-u^{(k)}|\leq\delta^{(1)}/2$ for all $r\in[\alpha,\beta]$.
\end{subcase}

In this case, we construct $\gamma$ using the same method as in the second part of Case~\ref{case:widehat{gamma} takes p_u trips.}.

\begin{subcase}
There is some $r^{(0)}\in(\alpha,\beta)$ such that for all $r\in[\alpha,r^{(0)}]$, $|\widehat{\gamma}(r)-u^{(k)}|\leq\delta^{(1)}/2$, and for all $r\in(r^{(0)},\beta]$, $\widehat{\gamma}$ is greater than $\rho^{(1)}$ from any critical point of $p_u$.
\end{subcase}

In this case, we construct $\gamma$ on $[\alpha,r^{(0)})$ using the same method as in the second part of Case~\ref{case:widehat{gamma} takes p_u trips.}, and we construct $\gamma$ on $[r^{(0)},\beta]$ using the same method as in the first part of Case~\ref{case:widehat{gamma} takes p_u trips.}.
\end{proof}

\begin{lemma}\label{lemma:If p_{widehat{u}} has a gradient line, then p_u has a gradient line.}
Fix some $v=(v^{(1)},\ldots,v^{(n-1)})\in{V_{n-1}}$ not the zero vector, and $\delta^{(1)}>0$.  Then there exists constants $\rho,\delta^{(2)}>0$ such that the following hold.  Let $u\in\Theta^{-1}(v)$ be chosen, and fix some $\widehat{u}\in{B_{\rho}}(u)$.  Let $\widehat{x_1},\widehat{x_2}\in{G}_{p_{\widehat{u}}}$ be given such that $\arg(p_u(x_1))=\arg(p_u(x_2))=0$, and such that there is a path $\widehat{\sigma}:[0,1]\to{G_{p_u}}$ such that $\widehat{\sigma(0)}=\widehat{x_1}$ and $\widehat{\sigma(1)}=\widehat{x_2}$ and $\arg(p_{\widehat{u}}(\widehat{\sigma}(r)))=0$ for all $r\in[0,1]$.  Then if $x_1,x_2\in{G_{p_{\widehat{u}}}}$ are such that $\arg(p_u(x_1))=\arg(p_u(x_1))=0$ and $|\widehat{x_1}-x_1|<\delta^{(2)}$ and $|\widehat{x^{(2)}}-x^{(2)}|<\delta^{(2)}$, then there is a path $\sigma:[0,1]\to{G_{p_u}}$ such that $\sigma(0)=x_1$, $\sigma(1)=x_2$, and for all $r\in[0,1]$, $\arg(p_u(\sigma(r)))=0$ and $|\widehat{\sigma}(r)-\sigma(r)|<\delta^{(1)}$.  Moreover, if $|p_{\widehat{u}}|$ is strictly increasing or strictly decreasing on $\widehat{\sigma}$, then we may assume that $|p_u|$ is strictly increasing or strictly deacreasing on $\sigma$ respectively.
\end{lemma}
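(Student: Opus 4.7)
The plan is to follow the blueprint of the proof of Lemma~\ref{lemma:If p_{widehat{u}} has a level curve edge, then p_u has a level curve edge.} almost verbatim, with the roles of $|p|$ and $\arg(p)$ interchanged. That is, along gradient lines the argument is constant while the modulus is monotone, whereas along level curve edges the modulus is constant while the argument is monotone, so the local technology needed is symmetric: Lemma~\ref{lemma:Points close to x have gradient path to x.} will play the role that Lemma~\ref{lemma:Points close to x have path to x in lambda.} played before, and Lemma~\ref{lemma:If g is close to f then they have same values close to each other.} together with Lemma~\ref{lemma:u and hat{u} close implies p_u and p_{hat{u}} close.} will handle the portions far from critical points of $p_u$.

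First I would shrink $\delta^{(1)}>0$ to be strictly less than $\tfrac12\,\mathrm{mindiff}(u^{(1)},\ldots,u^{(n-1)})$, so that for each $l$ the ball $B_{\delta^{(1)}}(u^{(l)})$ contains the single critical point $u^{(l)}$ of $p_u$, and the images $p_u(B_{\delta^{(1)}}(u^{(l)}))$ are pinned close to $v^{(l)}$. Applying Lemma~\ref{lemma:Points close to x have gradient path to x.} I then choose $\delta^{(2)}_{\mathrm{loc}}\in(0,\delta^{(1)}/4)$ small enough that any point in $B_{\delta^{(2)}_{\mathrm{loc}}}(u^{(l)})$ at which $\arg(p_u)=\arg(p_u(u^{(l)}))$ can be joined to $u^{(l)}$ by a gradient path contained in $B_{\delta^{(1)}/2}(u^{(l)})$ along which $|p_u|$ is strictly monotone and which is parameterized by $|p_u|$. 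Using the open mapping theorem I pick $M>0$ with $B_{2M}(v^{(k)})\subset p_u(B_{\delta^{(2)}_{\mathrm{loc}}}(u^{(k)}))$ for each $k$, and then use Lemma~\ref{lemma:u and hat{u} close implies p_u and p_{hat{u}} close.} to choose $\rho^{(1)}<\delta^{(2)}_{\mathrm{loc}}/2$ so that $|p_u(z)-p_{\widehat u}(\widehat z)|<M$ for $|z-\widehat z|<\rho^{(1)}$ and $\widehat u\in B_{\rho^{(1)}}(u)$. Finally, setting $K$ to be the complement of the $\rho^{(1)}/2$-neighborhood of the critical set of $p_u$ inside $\mathrm{cl}(G_{p_u})$ and applying Lemma~\ref{lemma:If g is close to f then they have same values close to each other.} on this compact set, I fix $\tau>0$ and $\eta\in(0,\rho^{(1)})$ as in the proof of Lemma~\ref{lemma:If p_{widehat{u}} has a level curve edge, then p_u has a level curve edge.}, and finally a $\rho\in(0,\rho^{(1)})$ so that $|u-\widehat u|<\rho$ forces $|p_u-p_{\widehat u}|<\tau$ on the relevant region. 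Take $\delta^{(2)}\colonequals\min(\eta,\delta^{(2)}_{\mathrm{loc}})$.

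Given $\widehat\sigma$ together with the hypotheses on $x_1,x_2$, identify the finitely many maximal subintervals $[r^{(1)},s^{(1)}],\ldots,[r^{(N)},s^{(N)}]\subset[0,1]$ on which $\widehat\sigma$ takes a $(\rho^{(1)},\delta^{(1)}/2)$-trip with respect to $p_u$; these are finitely many because $\widehat\sigma$ is rectifiable and each such trip has length bounded below by $\delta^{(1)}/2-\rho^{(1)}$. On such a trip the image of $\widehat\sigma$ is bounded away from the critical set of $p_u$, so $p_u$ is injective on $B_\eta(\widehat\sigma(r))$ for every $r$ in the trip; I then define $\sigma(r)$ to be the unique point in $B_\eta(\widehat\sigma(r))$ at which $p_u$ takes the (real) value $|p_{\widehat u}(\widehat\sigma(r))|$, the existence of which is guaranteed by Lemma~\ref{lemma:If g is close to f then they have same values close to each other.} since $|p_u(\widehat\sigma(r))-p_{\widehat u}(\widehat\sigma(r))|<\tau$. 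Continuity and monotonicity of $|p_u\circ\sigma|$ on the trip follow from local injectivity of $p_u$, and $|\sigma(r)-\widehat\sigma(r)|<\eta<\delta^{(1)}$ throughout. On each complementary subinterval $(s^{(j)},r^{(j+1)})$ (and on $[0,r^{(1)})$ and $(s^{(N)},1]$) the path $\widehat\sigma$ stays within $\delta^{(1)}/2$ of a uniquely determined critical point $u^{(l)}$ of $p_u$ (uniqueness by the shrinking of $\delta^{(1)}$ below half of $\mathrm{mindiff}$); by construction the already-defined $\sigma(s^{(j)})$ and $\sigma(r^{(j+1)})$ lie in $B_{\delta^{(2)}_{\mathrm{loc}}}(u^{(l)})$ and satisfy $\arg(p_u)=0$, so by the choice of $\delta^{(2)}_{\mathrm{loc}}$ each of them can be joined to $u^{(l)}$ by a gradient path inside $B_{\delta^{(1)}/2}(u^{(l)})$; concatenating these two local gradient paths produces the desired piece of $\sigma$ on the complementary interval, and a reparameterization aligns its domain with $[s^{(j)},r^{(j+1)}]$.

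For the endpoints, the hypothesis $|x_i-\widehat x_i|<\delta^{(2)}$ and $\arg(p_u(x_i))=0$ together with the choice of $\delta^{(2)}_{\mathrm{loc}}$ allow the same local-gradient-path construction to anchor $\sigma$ at $x_1$ and $x_2$, regardless of whether the endpoints lie in a trip or a critical-point neighborhood. Monotonicity of $|p_u|$ along $\sigma$ is inherited from monotonicity of $|p_{\widehat u}|$ along $\widehat\sigma$: on the trip pieces, $|p_u(\sigma(r))|=|p_{\widehat u}(\widehat\sigma(r))|$ by construction, while on the critical-point pieces the local parts were built by Lemma~\ref{lemma:Points close to x have gradient path to x.} to be strictly monotone in $|p_u|$, and the same pinching-by-injectivity argument used in Case~\ref{case:widehat{gamma} takes p_u trips.} of the proof of Lemma~\ref{lemma:If p_{widehat{u}} has a level curve edge, then p_u has a level curve edge.} forces the two local pieces to have the same direction of monotonicity. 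The main obstacle, as in the earlier lemma, is showing that at the seam $s^{(j)}$ (and $r^{(j+1)}$) between a trip piece and a critical-point piece the two local constructions actually meet at a common point and travel in a compatible direction; this is handled exactly as before by noting that $\sigma(s^{(j)})$ lies in a region on which $p_u$ is locally injective, so any two gradient paths through it with the same argument must coincide locally, which forces the local gradient path produced from the critical point to start at $\sigma(s^{(j)})$ with the correct orientation.
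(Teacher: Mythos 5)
Your proposal is correct and matches the paper's intent exactly: the paper's own proof of this lemma consists of the single remark that the argument for Lemma~\ref{lemma:If p_{widehat{u}} has a level curve edge, then p_u has a level curve edge.} carries over verbatim once Lemma~\ref{lemma:Points close to x have path to x in lambda.} is replaced by its gradient-line analogue Lemma~\ref{lemma:Points close to x have gradient path to x.}, which is precisely the modulus/argument role swap you carry out. Your write-up simply makes that transfer explicit, so it is essentially the same approach as the paper.
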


\begin{proof}
The exact same method of proof used for Lemma~\ref{lemma:If p_{widehat{u}} has a level curve edge, then p_u has a level curve edge.} works here except that instead of invoking Lemma~\ref{lemma:Points close to x have path to x in lambda.} we would invoke the gradient line version Lemma~\ref{lemma:Points close to x have gradient path to x.}.
\end{proof}

\bibliographystyle{plain}
\bibliography{refs}

\end{document}